\def\jobis#1{FF\fi
  \def\predicate{#1}%
  \edef\predicate{\expandafter\strip@prefix\meaning\predicate}%
  \edef\job{\jobname}%
  \ifx\job\predicate
}
\if\jobis{proposal}%
\DeclareMathOperator{\Supp}{Supp}
\DeclareMathOperator{\Fix}{Fix}
\DeclareMathOperator{\Mov}{Mov}
\DeclareMathOperator{\codim}{codim}
\DeclareMathOperator{\Spec}{Spec}
\DeclareMathOperator{\lct}{lct}
 \numberwithin{equation}{subsection}
 \numberwithin{footnote}{subsection}
 \newtheorem{cor}[subsection]{Corollary}
 \newtheorem{lem}[subsection]{Lemma}
 \newtheorem{prop}[subsection]{Proposition}
 \newtheorem{thm}[subsection]{Theorem}
 \newtheorem{conj}[subsection]{Conjecture}
    \newtheoremstyle{upright}%
        {8pt plus2pt minus4pt}%
        {8pt plus2pt minus4pt}%
        {\upshape}%
        {}%
        {\bfseries\scshape}%
        {}%
        {1em}%
        {}%
\theoremstyle{upright}
 \newtheorem{defn}[subsection]{Definition}
 \newtheorem{rem}[subsection]{Remark}
 \newcommand{\x}{\mathscr}
 \newcommand{\C}{\mathbb C}
 \newcommand{\N}{\mathbb N}
 \newcommand{\Q}{\mathbb Q}
 \newcommand{\R}{\mathbb R}
 \newcommand{\Z}{\mathbb Z}
 \newcommand{\bir}{\dashrightarrow}
 \newcommand{\rddown}[1]{\left\lfloor{#1}\right\rfloor} 
\title{Existence of log canonical flips and a special LMMP}
\author{Caucher Birkar}\thanks{2000 Mathematics Subject Classification: 14E30\\
\textbf{Acknowledgements.} This work was carried out while the author was visiting Institut de Math\'ematiques de Jussieu (Paris VI) 
during the 2010-2011 academic year. The visit was financially and administratively supported by the 
Fondation Sciences Math\'ematiques de Paris. He would like to thank these organisations 
for their incredible support and hospitality, and would like to thank Professor Claire Voisin 
who made the visit possible. He is grateful to the referee for the comments and suggestions 
that helped to improve this paper considerably. 
}
\date{\today}
\begin{document}
\maketitle

\begin{abstract}
Let  $(X/Z,B+A)$ be a $\Q$-factorial dlt pair  
where $B,A\ge 0$ are $\Q$-divisors and $K_X+B+A\sim_\Q 0/Z$. 
We prove that any LMMP$/Z$ on $K_X+B$ 
with scaling of an ample$/Z$ divisor terminates with a good log minimal model 
or a Mori fibre space. We show that a more general statement 
follows from the ACC for lc thresholds. 
An immediate corollary of these results is that log flips exist for log canonical pairs. 
\end{abstract}


\section{\textbf{Introduction}}

We work over $\C$. Extending results from the klt case to the lc case is often 
not as easy as it may sound. For example, it took some hard work to prove the 
cone and contraction theorem for lc pairs as done by 
Ambro [\ref{Ambro}] and Fujino [\ref{Fujino-lc-lmmp}]. Another major 
example is the finite generation of log canonical rings: 
the klt case was proved in [\ref{BCHM}] but extending this to the lc case is essentially  
equivalent to proving the abundance conjecture.

It is well-known that log flips exist for klt pairs 
[\ref{BCHM}]. In this paper we study the existence of log flips for lc 
pairs. Along the way, we came across the following more general statement which is more  
suitable for induction, and it is one of the main results of this paper.

\begin{thm}\label{conj-main-general}
Let $(X/Z,B+A)$ be a lc pair where $B,A\ge 0$ are  $\Q$-divisors, $A$ is $\Q$-Cartier, 
and the given morphism $f\colon X\to Z$ is surjective. 
Assume further that $K_X+B+A\sim_\Q 0/Z$. Then, 

$(1)$ $(X/Z,B)$ has a Mori fibre space or a log minimal model $(Y/Z,B_Y)$,

$(2)$ if $K_Y+B_Y$ is nef$/Z$, then it is semi-ample$/Z$,

$(3)$ if $(X/Z,B)$ is $\Q$-factorial dlt, then any LMMP$/Z$ on $K_X+B$ with scaling 
of an ample$/Z$ divisor terminates.
\end{thm}

In the theorem and throughout the paper Mori fibre spaces and  log minimal models 
are meant as in Definitions \ref{d-model} and \ref{d-mfs} which are \emph{slightly different} from the traditional 
definitions.
The surjectivity of $f$ is obviously not necessary but we have 
put there for practical convenience. 
Sketch of the main ideas of the proof of the theorem is given at the beginning of section 6 and the full 
proof is given in the same section.
A weaker form of the theorem was conjectured by
 Koll\'ar [\ref{Kollar}, Conjecture 4.1] in the context of studying singularities.

As mentioned earlier, an immediate consequence of the above theorem concerns the existence of log 
flips for lc pairs.

\begin{cor}\label{t-lc-flips}
Let $(X/Z,B)$ be a lc pair  and $X\to Y/Z$ an extremal $K_X+B$-negative 
flipping contraction. Then, the $K_X+B$-flip of $X\to Y$ exists.  
\end{cor}

The proof of the corollary is given at the end of section 6. 
In view of Ambro [\ref{Ambro}] and 
Fujino [\ref{Fujino-lc-lmmp}], and the previous result, we can 
run the LMMP on any lc pair. Termination of such an LMMP can be reduced to the $\Q$-factorial 
dlt case as in Remark \ref{rem-lifting-flips}.

Arguments very similar to those of the proof of Theorem \ref{conj-main-general} 
also work to prove a more general statement if we assume the ACC conjecture for lc thresholds. 

\begin{conj}[ACC for lc thresholds]\label{ACC}
Suppose that $\Gamma\subseteq [0,1]$ and $S\subseteq
\mathbb{Q}$ are finite sets of rational numbers, and $d$ is a natural number. Then, the set
$$\{\lct(M,X,B)|\mbox{ $(X,B)$ is lc of dimension $\le d$}\}$$

{\flushleft satisfies} the  ACC where the coefficients of $B$ belong to $\Gamma$, $M$ is a 
$\Q$-Cartier divisor with coefficients in $S$,  and $\lct(M,X,B)$ is the lc threshold of $M$ with respect to $(X,B)$.
\end{conj}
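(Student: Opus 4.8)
The plan is to prove the statement by induction on the dimension $d$, but not in isolation: the natural argument couples the ACC for log canonical thresholds with two companion statements in a single simultaneous induction. The first is a global ACC asserting that if $(X,B)$ is projective log canonical with $K_X+B\equiv 0$ and the coefficients of $B$ lie in a DCC set, then those coefficients in fact lie in a finite set. The second is a DCC statement for the set of volumes $\mathrm{vol}(K_X+B)$ of log canonical pairs with coefficients in a DCC set. These three statements feed into one another across dimensions $d$ and $d-1$, so I would set all three up at once and run the induction on them together. Note that the logical flow inside the present paper runs the other way, since the ACC conjecture is used as an \emph{input}; a genuine proof must therefore come from the independent boundedness and minimal model circle of ideas rather than from anything established above.

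First I would argue by contradiction: suppose there is a strictly increasing sequence $t_1<t_2<\cdots$ with $t_i=\lct(M_i,X_i,B_i)$ and supremum $\lambda$. Replacing each pair by a log resolution leaves the threshold unchanged, so I may assume a clean log smooth model, and then at $t=t_i$ the pair $(X_i,B_i+t_iM_i)$ is log canonical but fails to be so for any larger coefficient; hence a log canonical centre $W_i$ emerges exactly at the threshold. The key reduction is adjunction: restricting $K_{X_i}+B_i+t_iM_i$ to $W_i$ yields a lower-dimensional log canonical pair whose boundary coefficients, computed through the adjunction formula and the resulting different, lie in a set determined by $\Gamma$ and $S$. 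That set satisfies the DCC even though $\Gamma$ and $S$ are finite, which is precisely why the inductive hypotheses must be phrased for DCC coefficient sets, and it lets me invoke the $(d-1)$-dimensional statements on $W_i$.

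The engine that turns this local picture into a contradiction is boundedness. Using the inductive DCC for volumes together with the minimal model program of [\ref{BCHM}], I would show that the log canonical models attached to the sequence, having bounded volume, form a bounded family, and then a uniform effective base point freeness and global generation statement lets me pass to a limit pair whose log canonical threshold equals $\lambda$ and is actually attained. An attained supremum forces the sequence to stabilise, contradicting strict monotonicity. Establishing the boundedness, namely that bounded volume yields a bounded family of pairs with enough uniformity to take the limit, is what carries the weight of the argument and is exactly what must be proved in tandem with the DCC for volumes rather than assumed.

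The hard part will be this boundedness together with the architecture of the joint induction. The boundedness rests on extension theorems for pluricanonical sections and the full force of the MMP, and it is genuinely delicate in the singular log canonical category; meanwhile each of the three statements in dimension $d$ is consumed in the proofs of the others in dimension $d$ or $d-1$, so the dependencies must be threaded carefully to avoid circularity. By contrast, I do not expect the adjunction step or the limit bookkeeping to be the bottleneck: the real obstacles are proving that bounded invariants force a bounded family and closing the simultaneous induction cleanly.
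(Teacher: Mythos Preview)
The statement you are addressing is Conjecture~\ref{ACC}, and the paper does not contain a proof of it: it is introduced as an assumption, used throughout Section~6 as a hypothesis in Theorems~\ref{t-acc-main-general} and \ref{t-acc-main-general-modified}, and the paper explicitly notes that Hacon, M\textsuperscript{c}Kernan, and Xu had announced a proof. There is therefore no ``paper's own proof'' to compare your proposal against, a point you yourself observe when you write that the logical flow in the paper runs the other way.

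Your sketch is a fair high-level summary of the Hacon--M\textsuperscript{c}Kernan--Xu strategy: the simultaneous induction packaging ACC for thresholds with a global ACC for numerically trivial pairs and a boundedness/DCC statement for volumes, the reduction via adjunction to a lower-dimensional lc centre with DCC coefficients, and the use of boundedness of log general type pairs to extract a limiting pair and force a contradiction. You are also right that the substantive difficulty is the boundedness input and the bookkeeping of the mutual induction. But as written this is an outline of a long external argument, not a proof that can be inserted here; within the present paper the statement remains a conjecture and nothing more is claimed.
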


The conjecture is often formulated with $\Gamma, S$ being DCC sets but we only need the finite case.
 The conjecture was proved for smooth varieties by de Fernex, Ein and Musta\c{t}\u{a} [\ref{Ein-Mustata}] who used some of the ideas of Koll\'ar [\ref{kollar-acc}].
Hacon, M$\rm ^c$Kernan, and Xu have announced that they have solved the conjecture.

\begin{thm}\label{t-acc-main-general-modified}
Assume Conjecture \ref{ACC} in dimension $d$. 
Let $(X/Z,B)$ be a lc pair of dimension $d$ where $B$ is a $\Q$-divisor and the given morphism 
$f\colon X\to Z$ is surjective. Assume further that 
$K_X+B\sim_\Q 0$ over some non-empty open subset $U\subseteq Z$, and  if $\eta$ is  
the generic point of a lc centre of $(X/Z,B)$, then $f(\eta)\in U$. Then, 

$(1)$ $(X/Z,B)$ has a log minimal model $(Y/Z,B_Y)$,

$(2)$ $K_Y+B_Y$ is semi-ample$/Z$,

$(3)$ if $(X/Z,B)$ is $\Q$-factorial dlt, then any LMMP$/Z$ on $K_X+B$ with scaling 
of an ample$/Z$ divisor terminates.
\end{thm}

The proof is given in section 7 using arguments quite similar to those of section 6.
In section 5, we  show that under finite generation 
a stronger statement holds:

\begin{thm}\label{t-main-under-fg-intro}
Let $(X/Z,B)$ be a $\Q$-factorial dlt pair where $B$ is a $\Q$-divisor and $f\colon X\to Z$ is surjective. 
Assume further that $\mathcal{R}(X/Z,K_X+B)$ is a finitely generated $\mathcal{O}_Z$-algebra, 
and that $(K_X+B)|_{X_\eta}\sim_\Q 0$ where $X_\eta$ is the generic fibre of $f$. 
Then, any LMMP$/Z$ on $K_X+B$ with scaling of an ample$/Z$ divisor 
terminates with a good log minimal model. 
\end{thm}

This in particular takes care of the klt case of Theorems \ref{conj-main-general} and 
\ref{t-acc-main-general-modified}.

\begin{rem}
Hacon and Xu [\ref{Hacon-Xu-1}] have independently obtained more general forms 
of some of our results using a circle of similar ideas. Compare \ref{conj-main-general}, \ref{t-lc-flips}, \ref{t-main-under-fg-intro}, and \ref{t-acc-main-general-modified} 
with [\ref{Hacon-Xu-1}, 1.6, 1.8, 2.11, 1.1].  
\end{rem}

An important ingredient of the proof of Theorems \ref{conj-main-general} and \ref{t-acc-main-general-modified}
is the following result which naturally appears when one tries 
to reduce the semi-ampleness of a lc divisor $K_X+B$ to semi-ampleness in the klt case (cf. [\ref{Keel-Matsuki-McKernan}]).
It is a consequence of Koll\'ar's injectivity theorem and semi-ampleness on semi-lc pairs.
When $Z$ is projective, it follows from Fujino-Gongyo [\ref{Fujino-Gongyo-slc}]; the 
general case was proved by Hacon and Xu [\ref{Hacon-Xu-2}].

\begin{thm}[{[\ref{Fujino-Gongyo-slc}], [\ref{Hacon-Xu-2}]}]\label{conj-s-ampleness}
Let $(X/Z,B)$ be a $\Q$-factorial dlt pair and $T:=\rddown{B}$ where $B$ is 
a $\Q$-divisor. Suppose that 

$\bullet$ $K_X+B$ is nef$/Z$,

$\bullet$ $(K_X+B)|_S$ is semi-ample$/Z$ for each component $S$ of $T$,

$\bullet$ $K_X+B-\epsilon P$ is semi-ample$/Z$ for some $\Q$-divisor 
$P\ge 0$ with $\Supp P=T$ and for any sufficiently small rational number $\epsilon>0$.

Then, $K_X+B$ is semi-ample$/Z$.
\end{thm}

We will now state some of the other results of this paper that are of independent interest.
In section 3, we prove the following which is similar to a result of Fujino 
[\ref{Fujino-dlt-blowup}].

\begin{thm}\label{t-exceptional-LMMP-intro}
Let $(X/Z,B)$ be a $\Q$-factorial dlt pair with $K_X+B\sim_\R M/Z$ where 
$M\ge 0$ is very exceptional$/Z$. 
Then, any LMMP$/Z$ on $K_X+B$ with scaling of an ample$/Z$ divisor 
terminates with a model $Y$ on which we have $K_Y+B_Y\sim_\R M_Y=0/Z$.
\end{thm}

See Definition \ref{d-v-exceptional} for the notion of very exceptional divisors.

In section 4, we prove a stronger version of [\ref{B-II}, Theorem 1.5] which is 
repeatedly used in the subsequent sections:

\begin{thm}\label{t-mmodel-term-scaling-intro}
 Let $(X/Z,B+C)$ be a lc pair of dimension $d$ such that $K_X+B+C$ is nef$/Z$, 
 $B,C\ge 0$ and $C$ is $\R$-Cartier.
Assume that we are given an LMMP$/Z$ on $K_X+B$ with scaling of $C$ as in 
Definition \ref{d-LMMP-scaling} with $\lambda_i$ the corresponding numbers, 
and $\lambda:=\lim_{i\to \infty}\lambda_i$. 
Then, the LMMP terminates in the following cases:

${\rm (i)}$ $(X/Z,B)$ is $\Q$-factorial dlt, $B\ge H\ge 0$ for some ample$/Z$ 
$\R$-divisor $H$,

${\rm (ii)}$ $(X/Z,B)$ is $\Q$-factorial dlt, 
$C\ge H\ge 0$ for some ample$/Z$ $\R$-divisor $H$, and $\lambda>0$,

${\rm (iii)}$ $(X/Z,B+\lambda C)$  has a log minimal model, and $\lambda\neq \lambda_j$ for any $j$.
\end{thm}

 Finally, we briefly mention some previous works on flips. Mori proved the existence 
of flips for 3-folds with terminal singularities [\ref{Mori-flip}]. Shokurov proved 
it in full generality in dimension 3 [\ref{log-flips}][\ref{log-models}], in dimension 
4 in the klt case [\ref{pl-flips}], and also worked out a significant proportion of 
what we know about flips in every dimension. Hacon-M$\rm ^c$Kernan [\ref{HM}] filled in the 
missing parts of Shokurov's program on pl flips. 
Birkar-Cascini-Hacon-M$\rm ^c$Kernan [\ref{BCHM}] (together with Hacon-M$\rm ^c$Kernan [\ref{HM-2}]) proved the problem in the klt case in all dimensions 
(see also [\ref{BP}]). Fujino [\ref{Fujino-fg}] proved it for lc pairs in 
dimension 4. Alexeev-Hacon-Kawamata [\ref{AHK}] and Birkar [\ref{B}] proved it in 
dimension 5 in the klt case.

\vspace{0.3cm}
\section{\textbf{Preliminaries}}

\subsection*{\textbf{Notation and basic definitions}}
We work over $k=\C$. 
A \emph{pair} $(X/Z,B)$ consists of normal quasi-projective varieties $X,Z$ over $k$, 
an $\R$-divisor $B$ on $X$ with
coefficients in $[0,1]$ such that $K_X+B$ is $\mathbb{R}$-Cartier, and a projective 
morphism $X\to Z$. For a prime divisor $D$ on some birational model of $X$ with a
nonempty centre on $X$, $a(D,X,B)$
denotes the log discrepancy.

Let $(X/Z,B)$ be a lc pair. By a {$K_X+B$}-flip$/Z$ we mean the flip of a $K_X+B$-negative extremal flipping contraction$/Z$.
A \emph{sequence of log flips$/Z$ starting with} $(X/Z,B)$ is a sequence $X_i\bir X_{i+1}/Z_i$ in which  
$X_i\to Z_i \leftarrow X_{i+1}$ is a $K_{X_i}+B_i$-flip$/Z$, $B_i$ is the birational transform 
of $B_1$ on $X_1$, and $(X_1/Z,B_1)=(X/Z,B)$.
\emph{Special termination} means termination near $\rddown{B}$.

\begin{defn}[Weak lc and log minimal models]\label{d-model}
A pair $(Y/Z,B_Y)$ is a \emph{log birational model} of $(X/Z,B)$ if we are given a birational map
$\phi\colon X\bir Y/Z$ and $B_Y=B^\sim+E$ where $B^\sim$ is the birational transform of $B$ and 
$E$ is the reduced exceptional divisor of $\phi^{-1}$, that is, $E=\sum E_j$ where $E_j$ are the
exceptional/$X$ prime divisors on $Y$. 
A log birational model $(Y/Z,B_Y)$ is a \emph{weak lc model} of $(X/Z,B)$ if

$\bullet$ $K_Y+B_Y$ is nef/$Z$, and

$\bullet$ for any prime divisor $D$ on $X$ which is exceptional/$Y$, we have
$$
a(D,X,B)\le a(D,Y,B_Y)
$$
  A weak lc model $(Y/Z,B_Y)$ is a \emph{log minimal model} of $(X/Z,B)$ if 

$\bullet$ $(Y/Z,B_Y)$ is $\Q$-factorial dlt,

$\bullet$ the above inequality on log discrepancies is strict.\\

A log minimal model  $(Y/Z,B_Y)$ is \emph{good} if $K_Y+B_Y$ is semi-ample$/Z$.
\end{defn}

\begin{defn}[Mori fibre space]\label{d-mfs}
A log birational model $(Y/Z,B_Y)$ of a pair $(X/Z,B)$ is called a \emph{Mori fibre space} if 
$(Y/Z,B_Y)$ is $\Q$-factorial dlt, there is a $K_Y+B_Y$-negative extremal contraction $Y\to T/Z$ 
with $\dim Y>\dim T$, and 
$$
a(D,X,B)\le a(D,Y,B_Y)
$$
for  any prime divisor $D$ (on birational models of $X$) and strict inequality holds if $D$ is on $X$ and contracted$/Y$.
\end{defn}

\begin{defn}[Log smooth model]\label{d-log-smooth-model}
We need to define various kinds of log smooth models that in many situations 
allow us to replace a lc pair with a log smooth one. 
Let $(X/Z,B)$ be a lc pair, and let $f\colon W\to X$ be a log resolution. 
 Let $B_W\ge 0$ be a boundary on $W$ so that $K_W+B_W=f^*(K_X+B)+E$ 
where $E\ge 0$ is exceptional$/X$ and the support of $E$ contains  
each prime exceptional$/X$ divisor $D$ on $W$ if $a(D,X,B)>0$. 
We call $(W/Z,B_W)$ a \emph{log smooth model} of $(X/Z,B)$.
 However, in practice 
we usually need further assumptions. We list the ones we will need:

\emph{Type (1):} We take $B_W$ to be the birational transform of $B$ 
plus the reduced exceptional divisor of $f$, that is, we assume that 
$a(D,W,B_W)=0$  for each prime 
exceptional$/X$ divisor $D$ on $W$.

\emph{Type (2):} We assume that $a(D,W,B_W)>0$ if $a(D,X,B)>0$, for each prime 
exceptional$/X$ divisor $D$ on $W$.

Note that if $(X/Z,B)$ is klt and $(W/Z,B_W)$ is of type (2), then 
$(W/Z,B_W)$ is also klt.
\end{defn}

\begin{defn}[LMMP with scaling]\label{d-LMMP-scaling}
Let $(X_1/Z,B_1+C_1)$ be a lc pair such that $K_{X_1}+B_1+C_1$ is nef/$Z$, $B_1\ge 0$, and $C_1\ge 0$ is $\R$-Cartier. 
Suppose that either $K_{X_1}+B_1$ is nef/$Z$ or there is an extremal ray $R_1/Z$ such
that $(K_{X_1}+B_1)\cdot R_1<0$ and $(K_{X_1}+B_1+\lambda_1 C_1)\cdot R_1=0$ where
$$
\lambda_1:=\inf \{t\ge 0~|~K_{X_1}+B_1+tC_1~~\mbox{is nef/$Z$}\}
$$
Now, if $K_{X_1}+B_1$ is nef/$Z$ or if $R_1$ defines a Mori fibre structure, we stop. 
Otherwise assume that $R_1$ gives a divisorial 
contraction or a log flip $X_1\bir X_2$. We can now consider $(X_2/Z,B_2+\lambda_1 C_2)$  where $B_2+\lambda_1 C_2$ is 
the birational transform 
of $B_1+\lambda_1 C_1$ and continue. That is, suppose that either $K_{X_2}+B_2$ is nef/$Z$ or 
there is an extremal ray $R_2/Z$ such
that $(K_{X_2}+B_2)\cdot R_2<0$ and $(K_{X_2}+B_2+\lambda_2 C_2)\cdot R_2=0$ where
$$
\lambda_2:=\inf \{t\ge 0~|~K_{X_2}+B_2+tC_2~~\mbox{is nef/$Z$}\}
$$
 By continuing this process, we obtain a sequence of numbers $\lambda_i$ and a 
special kind of LMMP$/Z$ which is called the \emph{LMMP$/Z$ on $K_{X_1}+B_1$ with scaling of $C_1$}. 
Note that by definition $\lambda_i\ge \lambda_{i+1}$ for every $i$, and we usually put 
$\lambda=\lim_{i\to \infty} \lambda_i$.
When we refer to \emph{termination with scaling} we mean termination of such an LMMP.  

When we have a lc pair $(X/Z,B)$, we can always find an ample$/Z$ $\R$-Cartier divisor $C\ge 0$ such that 
$K_X+B+C$ is lc and nef$/Z$,  so we can run the LMMP$/Z$ with scaling assuming that all the 
necessary ingredients exist, e.g. extremal rays, log flips. In particular, when 
$(X/Z,B)$ is klt or $\Q$-factorial dlt the required extremal rays and log flips exist 
by [\ref{BCHM}] and [\ref{B}, Lemma 3.1].
\end{defn}

\begin{defn}
Let $f\colon X\to Z$ be a projective morphism of quasi-projective varieties with $X$ normal, 
and $D$ an $\R$-divisor on $X$.
Define the divisorial sheaf algebra of $D$ as 
$$
\mathcal{R}(X/Z,D)=\bigoplus_{m\ge 0}f_*\x{O}_X(\rddown{mD})
$$
where $m\in\Z$. If $Z$ is affine, we take ${R}(X/Z,D)$ to be the global 
sections of $\mathcal{R}(X/Z,D)$.
\end{defn}

By a \emph{contraction} $f\colon X\to Z$ we mean a projective morphism of 
quasi-projective varieties with $f_*\x{O}_X=\x{O}_Z$.

\vspace{0.3cm}
\subsection*{\textbf{Some basic facts about log minimal models and the LMMP}}

We collect some basic properties of log minimal models 
and the LMMP that will be used in this paper.

\begin{rem}[Weak lc model]\label{rem-on-m-models}
 Let $(Y/Z,B_Y)$ be a weak lc model of a lc pair $(X/Z,B)$ and $\phi\colon X\bir Y$ 
the corresponding birational map. Let $f\colon W\to X$ and $g\colon W\to Y$ 
be a common log resolution of $(X/Z,B)$ and $(Y/Z,B_Y)$.
Let
$$
E:=f^*(K_X+B)-g^*(K_Y+B_Y)=\sum_D (a(D,Y,B_Y)-a(D,X,B))D
$$
 where $D$ runs over the prime divisors on $W$.
We will show that $E$ is effective and exceptional$/Y$. From the definition of 
weak lc models we get $f_*E\ge 0$, and since $E$ is antinef$/X$, applying the negativity 
lemma implies that $E\ge 0$. 
Now assume that $D$ is a component of $E$ which is not exceptional$/Y$.
Then, $D$ is exceptional$/X$ otherwise $a(D,X,B)=a(D,Y,B_Y)$ and $D$ could not be a component of $E$. 
By definition of weak lc models we get $a(D,Y,B_Y)=0$, so by effectiveness of $E$ we have 
$a(D,X,B)=0$ which again shows that 
$D$ cannot be a component of $E$. Therefore, $E$ is exceptional$/Y$. 
\end{rem}

\begin{rem}[Two weak lc models]\label{rem-two-wlc-models}
Let $(Y_1/Z,B_{Y_1})$ and $(Y_2/Z,B_{Y_2})$ be two weak lc models of a lc pair $(X/Z,B)$. 
Let $f\colon W\to X$ and $g_i\colon W\to Y_i$ be a common resolution, 
and put
$$
E_i:=f^*(K_X+B)-g_i^*(K_{Y_i}+B_{Y_i})
$$
Then, by Remark \ref{rem-on-m-models}, $E_i$ is effective and exceptional$/Y_i$. 
Since ${g_2}_*(E_1-E_2)\ge 0$ and $E_1-E_2$ is anti-nef$/Y_2$, by the negativity lemma, 
$E_1-E_2\ge 0$. Similarly, $E_2-E_1\ge 0$. Therefore, 
$$
g_1^*(K_{Y_1}+B_{Y_1})=g_2^*(K_{Y_2}+B_{Y_2})
$$ 
In particular, if $K_{Y_1}+B_{Y_1}$ is ample$/Z$, then  $K_{Y_2}+B_{Y_2}$ is semi-ample$/Z$ 
and the birational map $Y_2\bir Y_1$ is actually a morphism which pulls back 
$K_{Y_1}+B_{Y_1}$ to  $K_{Y_2}+B_{Y_2}$.
\end{rem}

\begin{rem}[Log smooth models]\label{rem-log-smooth-model}
Log smooth models satisfy certain nice properties besides being simple in terms of singularities. 
Let $(W/Z,B_W)$ be a log smooth model of a lc pair $(X/Z,B)$. Let $D$ be a prime divisor 
on $W$. Then, $a(D,X,B)\ge a(D,W,B_W)$ with strict inequality iff $D$ is exceptional$/X$ 
and $a(D,X,B)>0$.

Another basic property of $(W/Z,B_W)$ is that  any 
log minimal model of $(W/Z,B_W)$ is also a log minimal model of $(X/Z,B)$. 
Indeed let $(Y/Z,B_Y)$ be a log minimal model of $(W/Z,B_W)$. Let 
$e\colon V\to W$ and $h\colon V\to Y$ be a common resolution. 
Then, 
$$e^*(K_W+B_W)=h^*(K_Y+B_Y)+G
$$
 where $G\ge 0$ is exceptional$/Y$ by Remark \ref{rem-on-m-models}. 
Thus, using 
$$
K_W+B_W=f^*(K_X+B)+E
$$ 
we get 
$$
e^*f^*(K_X+B)=h^*(K_Y+B_Y)+G-e^*E
$$
 Since $G-e^*E$ is antinef$/X$, 
and $f_*e_*(G-e^*E)\ge 0$, by the negativity lemma, $G-e^*E\ge 0$. 
In particular, this means that $a(D,X,B)\le a(D,Y,B_Y)$ for any 
prime divisor $D$ on $V$. Next we will compare log discrepancies of 
prime divisors on $X$ and $Y$.

If $D$ is a prime divisor on $X$ which is exceptional$/Y$, then 
$$
a(D,X,B)=a(D,W,B_W)<a(D,Y,B_Y)
$$ 
On the other hand, let 
$D$ be a prime divisor on $Y$ which is exceptional$/X$. If $D$ is exceptional$/W$, 
then by definition of log minimal models, $a(D,Y,B_Y)=0$. Assume that $D$ is not exceptional$/W$.
Then, $a(D,W,B_W)=a(D,Y,B_Y)$ which implies that $a(D,X,B)\le a(D,W,B_W)$ because of 
the relation $a(D,X,B)\le a(D,Y,B_Y)$ obtained above. This is possible only if 
$$
a(D,X,B)=a(D,W,B_W)=0
$$ 
hence again $a(D,Y,B_Y)=0$. Therefore, the prime exceptional 
divisors of $Y\bir X$ appear with coefficient one in $B_Y$.
Finally, if $D$ is a prime divisor on $Y$ which is not exceptional$/X$, or  
if $D$ is a prime divisor on $X$ which is not exceptional$/Y$,
then 
$$
a(D,X,B)=a(D,W,B_W)=a(D,Y,B_Y)
$$
So, $B_Y$ is the birational transform of $B$ plus the reduced exceptional 
divisor of $Y\bir X$ hence $(Y/Z,B_Y)$ is a log minimal model of $(X/Z,B)$.
\end{rem}

\begin{rem}[Lifting a sequence of log flips with scaling]\label{rem-lifting-flips}
(1) Assume that we are given an LMMP with scaling as in Definition \ref{d-LMMP-scaling} 
which consists of only a sequence $X_i\bir X_{i+1}/Z_i$ of log flips. 
Let $(X_1'/Z,B_1')$ be a $\Q$-factorial dlt blowup of 
$(X_1/Z,B_1)$ and $C_1'$ the pullback of $C_1$ (this exists by Corollary \ref{c-Q-factorial-blup} 
below).
 Since $K_{X_1}+B_1+\lambda_1C_1\equiv 0/Z_1$, we get
$K_{X_1'}+B_1'+\lambda_1C_1'\equiv 0/Z_1$. 

Run an LMMP$/Z_1$ on $K_{X_1'}+B_1'$ 
with scaling of some ample$/Z_1$ divisor which is automatically also an LMMP$/Z_1$ on
$K_{X_1'}+B_1'$ with scaling of 
$\lambda_1C_1'$. Assume that this LMMP terminates with a log minimal model  $(X_2'/Z_1,B_2')$.
By construction, $({X_2}/Z_1,B_2)$ and $(X_2'/Z_1,B_2')$ are both weak lc models of $({X_1'}/Z_1,B_1')$. 
By Remark \ref{rem-two-wlc-models}, 
$X_2'$ maps to $X_2$ and $K_{X_2'}+B_2'$ is the 
pullback of $K_{X_2}+B_2$. Thus, $(X_2'/Z,B_2')$
is a $\Q$-factorial dlt blowup of $(X_2/Z,B_2)$. Now, 
$K_{X_2'}+B_2'+\lambda_1C_2'\equiv 0/Z_1$ where $C_2'$ is the birational 
transform of $C_1'$ and actually the pullback of $C_2$.
We can continue this process: that is use the fact that 
$K_{X_2}+B_2+\lambda_2C_2\equiv 0/Z_2$ and  
$K_{X_2'}+B_2'+\lambda_2C_2'\equiv 0/Z_2$ and run an LMMP$/Z_2$ on $K_{X_2'}+B_2'$, etc.

We have shown that we can lift the original sequence to an LMMP$/Z$ on $K_{X_1'}+B_1'$ 
with scaling of $C_1'$ assuming that the following statement holds for each $i$:\\

 $(*)$  some LMMP$/Z_i$ on $K_{X_i'}+B_i'$ with scaling of some ample$/Z_i$ divisor 
 terminates where $({X_i'}/Z,B_i')$ is a $\Q$-factorial dlt blowup of 
$(X_i/Z,B_i)$.\\

Note however that each $X_i'\bir X_{i+1}'$ is a sequence of log flips and divisorial 
contractions (not necessarily just one log flip).\\

(2) We will show that $(*)$ holds for $i$ if $(X_i/Z,tB_i)$ is klt for some $t\in [0,1]$ 
(in particular, this holds if $(X_i/Z,B_i)$ is klt or $\Q$-factorial dlt). 
 Pick an ample$/Z_i$ $\R$-divisor 
$H\ge 0$ on $X_i$ so that 
$$
K_{X_i}+B_i+H\sim_\R 0/Z_i
$$
 and $(X_i/Z,B_i+H)$ is lc. Now for a 
  sufficiently small $\epsilon>0$, there 
is a small $\delta>0$ and $\Delta_i$ such that 
$$
(1-\delta)B_i\le \Delta_i\sim_\R B_i+\epsilon H/Z_i
$$ 
and that $(X_i/Z,\Delta_i)$ is klt: if $t=1$, then we can just take 
$\Delta_i= B_i+\epsilon H$; but if $t<1$, then $B_i$ is $\R$-Cartier and 
we take $\delta>0$ small enough so that $\delta B_i+\epsilon H$ is ample$/Z_i$, and 
we put $\Delta_i=(1-\delta)B_i+H'$ for a general $H'\sim_\R \delta B_i+\epsilon H/Z_i$.
Now let $K_{X_i'}+\Delta_i'$ be the pullback of $K_{X_i}+\Delta_i$.
We can assume that $\Delta_i'\ge 0$ by choosing $\delta$ small enough. 
Now run an LMMP/$Z_i$ on $K_{X_i'}+\Delta_i'$ 
with scaling of some ample$/Z_i$ divisor. By [\ref{BCHM}], the LMMP 
terminates. By construction, 
$$
K_{X_i'}+\Delta_i'\sim_\R (1-\epsilon)(K_{X_i'}+B_i')/Z_i
$$ 
so the LMMP is also an LMMP$/Z_i$ on $K_{X_i'}+B_i'$. 
\end{rem}

\begin{rem}[On special termination]\label{rem-LMMP-scaling-induction}
Assume that we are given an LMMP with scaling as in Definition \ref{d-LMMP-scaling} 
which consists of only a sequence $X_i\bir X_{i+1}/Z_i$ of log flips, and that $(X_1/Z,B_1)$ 
is $\Q$-factorial dlt. Assume $\rddown{B_1}\neq 0$ and pick a component $S_1$ of $\rddown{B_1}$. 
Let $S_i\subset X_i$ be the birational transform of $S_1$ and $T_i$ the normalisation 
of the image of $S_i$ in $Z_i$. Using standard special termination arguments, 
we will see that termination 
of the LMMP near $S_1$ is reduced to termination  
in lower dimensions. It is well-known that the induced map $S_i\bir S_{i+1}/T_i$ is 
an isomorphism in codimension one if $i\gg 0$ (cf. [\ref{Fujino-st}]). So, we could assume that these maps 
are all isomorphisms in codimension one. Put $K_{S_i}+B_{S_i}:=(K_{X_i}+B_i)|_{S_i}$.
In general, $S_i\bir S_{i+1}/T_i$ is not a $K_{S_i}+B_{S_i}$-flip. To apply induction, 
we need to simplify the situation as follows.

Assume that  $(S_1',B_{S_1'})$ is a $\Q$-factorial dlt blowup 
of $(S_1,B_{S_1})$  (this exists by  
Corollary \ref{c-Q-factorial-blup} below). The idea is to  use 
 the sequence $S_i\bir S_{i+1}/T_i$ to construct an LMMP$/T$ on $K_{S_1'}+B_{S_1'}$
with scaling of $C_{S_1'}$ where $T$ is the normalisation of the image 
of $S_1$ in $Z$ and $C_{S_1'}$ is the pullback of $C_1$. This can be done similar to Remark \ref{rem-lifting-flips} assuming 
that something like $(*)$ is satisfied (in practice, this is satisfied by induction; it also 
can be derived from Theorem \ref{t-mmodel-term-scaling} and this remark is applied only after 
\ref{t-mmodel-term-scaling}). More precisely, we first run an LMMP$/T_1$ on $K_{S_1'}+B_{S_1'}$. 
This is also an LMMP$/T_1$ on $K_{S_1'}+B_{S_1'}$ with scaling of $\lambda_1C_{S_1'}$ because
$K_{S_1'}+B_{S_1'}+\lambda_1C_{S_1'}\equiv 0/T_1$. 
Assuming this terminates, we get a model $S_2'$ on which  $K_{S_2'}+B_{S_2'}$ is nef$/T_1$.
Since $S_1\bir S_{2}$ is 
an isomorphism in codimension one and $K_{S_{2}}+B_{S_{2}}$ 
is ample$/T_1$, $(S_2/T_1,B_{S_2})$ is the lc model of all the pairs $(S_1/T_1,B_{S_1})$, 
$(S_1'/T_1,B_{S_1'})$, and $(S_2'/T_1,B_{S_2'})$. 
Thus, $K_{S_2'}+B_{S_2'}$ is semi-ample$/T_1$ and the map $S_2'\bir S_2$ is a 
morphism which pulls back $K_{S_2}+B_{S_2}$ to $K_{S_2'}+B_{S_2'}$. We continue the 
process by running an LMMP$/T_2$ on $K_{S_2'}+B_{S_2'}$ and so on. So, we get an 
LMMP$/T$ on $K_{S_1'}+B_{S_1'}$
with scaling of $C_{S_1'}$.

If the LMMP$/T$ on $K_{S_1'}+B_{S_1'}$ terminates, then the original LMMP terminates 
near $S_1$. One usually applies this argument to every component of $\rddown{B_1}$ 
to derive termination near $\rddown{B_1}$.
\end{rem}

\vspace{0.3cm}
\section{\textbf{LMMP on very exceptional divisors}}

Let $(X/Z,B)$ be a $\Q$-factorial dlt pair such that $X\to Z$ is birational and 
$K_X+B\sim_\R M/Z$ with $M\ge 0$ exceptional$/Z$. Run an LMMP$/Z$ on $K_X+B$ with 
scaling of an ample divisor. In some step of the LMMP, $K_X+B$ becomes nef along very 
general curves of $D/Z$ for any prime divisor $D$. Since $M$ is exceptional$/Z$, 
this is possible only if $M$ is contracted in the process (this follows from 
a general form of the negativity lemma that is discussed below). So, the LMMP terminates.  
This is useful in many situations, for example, to construct a dlt blowup of a lc pair.  
It also plays a crucial role in the proof of Theorem \ref{t-main-under-fg-intro}. 
 
 In this section, we generalise and make precise the above phenomenon. Many of the ideas 
in this section (and in the proof of Theorem \ref{t-main-under-fg-intro}) are actually explicit or implicit in 
Shokurov [\ref{pl-flips}]. However, we would like to give full details here.

\begin{defn}[cf. Shokurov { [\ref{pl-flips}, Definition 3.2]}]\label{d-v-exceptional}
Let $f\colon X\to Y$ be a contraction of normal varieties, $D$ an $\R$-divisor on $X$, and 
$V\subset X$ a closed subset. We say that $V$ is vertical 
over $Y$ if $f(V)$ is a proper subset of $Y$. 
We say that $D$ is very exceptional$/Y$ if $D$ is vertical$/Y$ and  
for any prime divisor $P$ on $Y$ there is a prime divisor $Q$ on $X$ which is 
not a component of $D$ but $f(Q)=P$, i.e. over the generic point of $P$ we have: 
$\Supp f^*P\nsubseteq \Supp D$.  
\end{defn}

If $\codim f(D)\ge 2$, then $D$ is very exceptional. On the other hand, when $f$ is birational, 
then exceptional and very exceptional are equivalent notions. 
The next lemma indicates where one can expect to find very exceptional divisors.

\begin{lem}[cf. Shokurov { [\ref{pl-flips}, Lemma 3.19]}]\label{l-v-exceptional}
Let $f\colon X\to Y$ be a contraction of normal varieties, projective over a normal affine variety 
$Z$. Let $A$ be an ample$/Z$ divisor on $Y$ and $F=f^*A$. If $E\ge 0$ is a divisor on $X$ which is 
vertical$/Y$ and such that $mE=\Fix (mF+mE)$ for every integer $m\gg 0$, then $E$ is very 
exceptional$/Y$. 
\end{lem}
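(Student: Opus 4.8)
The plan is to argue by contradiction, following the pattern of Shokurov's Lemma~3.19. Suppose $E$ is not very exceptional$/Y$. Since $E$ is vertical$/Y$ (given) and $f$ is surjective, Definition~\ref{d-v-exceptional} forces the existence of a prime divisor $P$ on $Y$ such that \emph{every} prime divisor $Q$ on $X$ with $f(Q)=P$ is a component of $E$; such $Q$ exist because $f$ is surjective. Write them as $Q_1,\dots,Q_r$ with $e_i:=\mult_{Q_i}E>0$, and let $a_i\ge 1$ be the ramification index of $Q_i$ over $P$. After replacing $A$ by a multiple we may assume $A$ is Cartier, so the projection formula gives $f_*\mathcal{O}_X(mF)=\mathcal{O}_Y(mA)$ and $f_*\mathcal{O}_X(mF+mE)=\mathcal{O}_Y(mA)\otimes f_*\mathcal{O}_X(mE)$; as $E\ge 0$, the inclusion $\mathcal{O}_Y\hookrightarrow f_*\mathcal{O}_X(mE)$ yields an exact sequence
$$0\to\mathcal{O}_Y(mA)\to\mathcal{O}_Y(mA)\otimes f_*\mathcal{O}_X(mE)\to\mathcal{C}_m\to 0 .$$
For $m\gg 0$ the system $|mF+mE/Z|$ is non-empty, and $mE=\Fix(mF+mE/Z)$ says every section of $mF+mE$ over $Z$ vanishes along $mE$; thus the first map above becomes an isomorphism after applying $g_*$ (with $g\colon Y\to Z$). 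Pushing the sequence forward and using relative Serre vanishing $R^1g_*\mathcal{O}_Y(mA)=0$ for $m\gg 0$, we conclude $g_*\mathcal{C}_m=0$ for all $m\gg 0$.

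The contradiction will be produced by a \emph{fixed} nonzero subsheaf of $f_*\mathcal{O}_X(mE)/\mathcal{O}_Y$. At the generic point of $P$ one has $f_*\mathcal{O}_X(mE)=\mathfrak{m}_P^{-d_m}$ with $d_m=\rddown{\min_i me_i/a_i}$, so $d_m\ge 1$ once $m\gg 0$, because all $e_i>0$; in particular $\mathcal{O}_Y\subsetneq f_*\mathcal{O}_X(mE)$. Suppose we can find a fixed effective divisor $\Theta$ on $Y$ with $P\nleq\Theta$ and $\mathcal{O}_Y(P-\Theta)\subseteq f_*\mathcal{O}_X(mE)$ for every $m\gg 0$. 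Then the image $\mathcal{F}_0$ of $\mathcal{O}_Y(P-\Theta)$ in $f_*\mathcal{O}_X(mE)/\mathcal{O}_Y$ equals $\mathcal{O}_Y(P-\Theta)/\mathcal{O}_Y(-\Theta)$, a fixed coherent sheaf which is nonzero since its stalk at $\eta_P$ is $\mathfrak{m}_P^{-1}/\mathcal{O}_{Y,\eta_P}\ne 0$. Tensoring $\mathcal{F}_0\hookrightarrow f_*\mathcal{O}_X(mE)/\mathcal{O}_Y$ with $\mathcal{O}_Y(mA)$ and applying $g_*$ gives $g_*(\mathcal{F}_0\otimes\mathcal{O}_Y(mA))\hookrightarrow g_*\mathcal{C}_m=0$; but $A$ is ample$/Z$, so $\mathcal{F}_0\otimes\mathcal{O}_Y(mA)$ is globally generated$/Z$ for $m\gg 0$ and, being nonzero, must have $g_*(\mathcal{F}_0\otimes\mathcal{O}_Y(mA))\ne 0$ — a contradiction. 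This proves the lemma.

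The whole thing therefore reduces to constructing $\Theta$, and this is the only step I expect to require genuine care. Unwinding $\mathcal{O}_Y(P-\Theta)\subseteq f_*\mathcal{O}_X(mE)$, one needs: whenever $g\in k(Y)$ satisfies $\divi_Y(g)+P-\Theta\ge 0$, then $\divi_X(g)+mE\ge 0$, i.e. $\divi_X(g)\ge f^*(\Theta-P)$ suffices provided this is $\ge -mE$. Along each $Q_i$ this amounts to $me_i-a_i\ge 0$ (with only nonnegative help from $\Theta$), true for $m\gg 0$; along prime divisors of $X$ not contained in $f^{-1}(P)$ it is automatic. The delicate case is a prime divisor $Q'$ that lies in $f^{-1}(P)$ but does not dominate $P$ — an ``extra'' component of $f^*P$: such a $Q'$ need not be a component of $E$, and along it $g$ may have a pole uncompensated by $mE$. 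For each such $Q'$ with $Q'\nsubseteq\Supp E$ I would pick a prime divisor $P_{Q'}\ne P$ with $\overline{f(Q')}\subseteq P_{Q'}$ and put a large enough multiple of $P_{Q'}$ into $\Theta$, so that the forced bound $\divi_X(g)\ge f^*(\Theta-P)$ already gives $\mult_{Q'}\divi_X(g)\ge\mult_{Q'}(f^*P)$; for the finitely many $Q'$ with $Q'\subseteq\Supp E$ the term $m\,\mult_{Q'}E$ takes over once $m\gg 0$. Since there are only finitely many such $Q'$, $\Theta$ is a fixed effective divisor and $P\nleq\Theta$ by construction. (As $P$ and the $P_{Q'}$ need not be $\Q$-Cartier, one should read the pullbacks here valuatively, or pass once and for all to a common log resolution of $X$ and $Y$; this does not affect the argument.)
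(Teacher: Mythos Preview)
Your overall strategy is the same as the paper's: the $\Fix$ condition together with ampleness of $A$ over $Z$ forces $\mathcal{O}_Y=f_*\mathcal{O}_X(lE)$, and then a prime divisor $P$ violating very-exceptionality gives a strict inclusion $\mathcal{O}_Y\subsetneq f_*\mathcal{O}_X(lE)$ near $\eta_P$. The difference is in execution. The paper does not build a global $\Theta$; instead it uses Serre's correspondence (Hartshorne II, 5.15) directly: since $H^0(Y,mA)\to H^0(Y,(f_*\mathcal{O}_X(lE))(mA))$ is an isomorphism for all $m\gg 0$ (for each fixed $l$), the sheaf map $\mathcal{O}_Y\to f_*\mathcal{O}_X(lE)$ is an isomorphism. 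Then it restricts to a smooth open $U\subseteq Y$ chosen so that $P|_U$ is Cartier \emph{and} every component of $f^*(P|_U)$ dominates $P|_U$; this simultaneously removes the non-Cartier issue and the ``extra'' components $Q'$, and one gets $f^*(P|_U)\le lE|_V$ for some $l$, whence $\mathcal{O}_U\subsetneq\mathcal{O}_U(P|_U)\subseteq f_*\mathcal{O}_V(lE|_V)=\mathcal{O}_U$, a contradiction.

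Your construction of $\Theta$ has a genuine gap at exactly the point you flag. To control $v_{Q'}(g)$ by imposing high vanishing along a prime divisor $P_{Q'}\ni f(\eta_{Q'})$, you need the inequality $\divi_X(g)\ge f^*(\Theta-P)$ near $\eta_{Q'}$; this requires $\Theta-P$ (in particular $P$) to be Cartier in a neighbourhood of $f(\eta_{Q'})$, which can fail since $f(\eta_{Q'})$ may lie in the singular locus of $Y$. ``Reading the pullback valuatively'' is not a well-defined operation on non-$\Q$-Cartier Weil divisors, and passing to a log resolution of $Y$ destroys the contraction $f$ and the hypothesis $mE=\Fix(mF+mE)$, so neither suggested fix works without substantial further argument. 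The quickest repair within your framework is to abandon $\Theta$ altogether: you have already computed $(f_*\mathcal{O}_X(m_0E))_{\eta_P}=\mathfrak{m}_P^{-d_{m_0}}\supsetneq\mathcal{O}_{Y,\eta_P}$ for $m_0\gg 0$, so $\mathcal{F}_0:=f_*\mathcal{O}_X(m_0E)/\mathcal{O}_Y$ is a nonzero coherent sheaf, independent of $m$, with $\mathcal{F}_0\hookrightarrow f_*\mathcal{O}_X(mE)/\mathcal{O}_Y$ for all $m\ge m_0$; then your Serre-vanishing contradiction goes through. But note that once you do this, your argument has become the paper's: you are proving $f_*\mathcal{O}_X(m_0E)=\mathcal{O}_Y$ via ampleness of $A$, then deriving the contradiction from the stalk at $\eta_P$.
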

\begin{proof}
We can assume that $A$ is very ample$/Z$. Since $E$ is effective, for each integer $l>0$, 
we have the natural exact sequence 
$0\to \x{O}_X\to \x{O}_X(lE)$ given by the inclusion $\x{O}_X\subseteq  \x{O}_X(lE)$. 
Since $mF=\Mov (mF+lE)$ for each $m\gg 0$, 
the induced homomorphism 
$$
\bigoplus_{m\in\Z} H^0(X,mF)\to \bigoplus_{m\in\Z} H^0(X,mF+lE)
$$ 
of $R(X/Z,F)$-modules is an isomorphism in large degrees.  
This in turn induces a homomorphism  
$$
 \bigoplus_{m\in\Z} H^0(Y,mA)
 \to \bigoplus_{m\in\Z} H^0(Y,(f_*\x{O}_X(lE))(mA))
$$
of $R(Y/Z,A)$-modules which is an isomorphism in large degrees.
Therefore, by [\ref{Hartshorne}, II, Proposition 5.15 and Exercise 5.9], the injective morphism
 $\x{O}_Y=f_*\x{O}_X \to f_*\x{O}_X(lE)$ is also surjective hence we get  
 $\x{O}_Y=f_*\x{O}_X =f_*\x{O}_X(lE)$. 

Assume that $E$ is not very exceptional$/Y$. 
Then, there exist a prime divisor $P$ on $Y$ such that if $Q$ is any prime divisor on $X$ with 
$f(Q)=P$,  then $Q$ is a component of $E$.  
Let $U$ be a smooth open subset of $Y$ 
so that $\x{O}_U\subsetneq \x{O}_U(P|_U)$, $P|_U$ is Cartier, and each component of 
$f^*P|_U$ maps onto $P|_U$. Let $V=f^{-1}U$. 
Then, $\Supp f^*P|_U\subseteq \Supp E|_V$ and for some $l>0$, $f^*P|_U\le lE|_V$. 
But then,  
$$
\x{O}_U\subsetneq \x{O}_U(P|_U)\subseteq f_*\x{O}_V(lE|_V)=\x{O}_U
$$ 
which is a contradiction.
Therefore, $E$ is very exceptional$/Y$.
\end{proof}

Let $S\to Z$ be a projective morphism of varieties and $M$ an $\R$-Cartier divisor on $S$. We say that 
$M$ is \emph{nef on the very general curves of $S/Z$} if there is a countable union $\Lambda$ 
of proper closed subsets of $S$ such that $M\cdot C\ge 0$ for any curve $C$ on $S$ contracted 
over $Z$ satisfying $C\nsubseteq \Lambda$.
We need the following general negativity lemma of Shokurov [\ref{pl-flips}, Lemma 3.22] 
(see also Prokhorov [\ref{Prokhorov}, Lemma 1.7]).

\begin{lem}[Negativity]\label{l-negativity}
Let $f\colon X\to Z$ be a contraction of normal varieties. Let 
$D$ be an $\R$-Cartier divisor on $X$ written as $D=D^+-D^-$ with $D^+,D^-\ge 0$ 
having no common components. Assume that $D^-$   
is very exceptional$/Z$, and that for each component $S$ of $D^-$, $-D|_S$ is
nef on the very general curves of $S/Z$. Then, $D\ge 0$.
\end{lem}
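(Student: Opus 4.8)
The plan is to reduce the statement to the case where $X \to Z$ is *birational*, and then invoke the classical negativity lemma. First, consider the Stein factorization $X \xrightarrow{g} Y \to Z$ of $f$; replacing $Z$ by $Y$ and noting that "very exceptional$/Z$" together with the contraction hypothesis $f_*\mathcal{O}_X=\mathcal{O}_Z$ forces $D^-$ to be vertical, we may assume $f_*\mathcal{O}_X=\mathcal{O}_Z$ and that $Z$ is affine (the statement is local on $Z$, and nefness on very general curves is preserved by restricting to the preimage of an affine open). The goal is to show $D^- = 0$; once $D^-$ has no components, $D = D^+ \ge 0$ is immediate.

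Suppose $D^- \ne 0$. The idea is to run an argument in the style of Lemma \ref{l-v-exceptional}: pick an ample$/Z$ divisor $A$ on $Z$ (here $Z$ plays the role of the base), set $F = f^*A$, and study for large $m$ the fixed part of $mF + m D^-$ on $X$. Because $D^-$ is very exceptional$/Z$, over the generic point of each prime divisor $P$ on $Z$ some prime $Q$ with $f(Q)=P$ is not a component of $D^-$, so sections of $mF$ cannot "see" $D^-$ in the sense that $f_*\mathcal{O}_X(\lfloor mD^- \rfloor) = \mathcal{O}_Z$; this is exactly the conclusion extracted in the proof of Lemma \ref{l-v-exceptional}. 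I would then use the hypothesis that $-D$ — equivalently $D^- - D^+$ — is nef on the very general curves of each component $S$ of $D^-$. Choosing very general complete intersection curves $C$ inside $S$ that are contracted by $f$ and avoid the countable bad locus $\Lambda$, nefness gives $(D^- - D^+)\cdot C \ge 0$, i.e. $D^- \cdot C \ge D^+ \cdot C \ge 0$ since $D^+, D^-$ share no components so $D^+\cdot C \ge 0$ already. On the other hand, the self-intersection of a vertical effective divisor against general curves moving inside its own support must be non-positive — this is the classical negativity of the intersection form on fibres of a contraction — and in fact strictly negative on some component unless that component is zero. Combining these two inequalities yields $D^- \cdot C = 0$ for general $C$ in every component, which, pushed through the negativity of the fibrewise intersection pairing (Zariski's lemma / the Hodge index argument on the generic fibre of $S \to f(S)$), forces $D^-$ to be a pullback from $f(S)$; but $D^-$ vertical and very exceptional cannot contain such a pullback, contradiction. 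Hence $D^- = 0$.

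Concretely, the cleanest route is to combine Lemma \ref{l-v-exceptional} with the classical relative negativity lemma for birational morphisms. After the reductions above I would further reduce to the birational case by the following device: if $D^-\ne 0$, let $S$ be a component of $D^-$; the restriction $D|_S$ is, up to adjunction-type corrections which do not affect the argument, antinef on very general curves of $S/Z$, and $S$ maps to a proper subvariety $T=f(S)\subsetneq Z$. On $S$, every very general curve contracted over $T$ meets $D^-$ non-negatively (by nefness of $-D$) and meets it non-positively if $D^-|_S$ is effective and supported on fibres (negativity of the fibrewise form). Running this over all components simultaneously and using that $D^+, D^-$ have no common components to control cross terms, one gets $\mathrm{mult}_S D^- = 0$ for every $S$, the desired contradiction. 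The book-keeping is finite because $D^-$ has finitely many components.

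The main obstacle I expect is making the fibrewise negativity rigorous in the relative, possibly singular setting: one needs that for an effective $\R$-divisor $E$ vertical over $Z$ with no "pullback part," there exist very general curves $C$ in $\mathrm{Supp}\,E$, contracted over $Z$, with $E\cdot C < 0$ on some component — i.e. the generalization of Zariski's lemma allowing the countable union $\Lambda$ from the "nef on very general curves" hypothesis to be absorbed. This requires choosing the complete-intersection curves generically enough to (i) avoid $\Lambda$, (ii) lie in a single component of $E$, and (iii) still detect the negative self-intersection; the very-exceptional hypothesis is precisely what guarantees there is "room" to do this, since it says no component's support exhausts $\mathrm{Supp}\,f^*P$. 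I would handle this by passing to a log resolution and a general hyperplane section of $Z$ to cut down to the surface case over a curve, where Zariski's lemma applies verbatim, and then lift back.
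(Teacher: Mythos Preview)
Your proposal does eventually gesture at the correct idea—induction via hyperplane sections down to the surface-over-curve case—but only in the final paragraph, and the primary strategy you outline before that has real problems.

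First, the appeal to Lemma \ref{l-v-exceptional} is in the wrong direction: that lemma deduces very exceptionality from a fixed-part condition on $|mF+mE|$, not the other way around. From the bare hypothesis that $D^-$ is very exceptional you cannot immediately conclude $f_*\mathcal{O}_X(\lfloor mD^-\rfloor)=\mathcal{O}_Z$; components of $D^-$ may dominate divisors on $Z$, so this is not the usual ``effective exceptional divisor pushes forward to zero'' situation. In any case this sheaf computation plays no role in the actual argument.

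Second, and more seriously, your main mechanism is circular. You assert that ``the self-intersection of a vertical effective divisor against general curves moving inside its own support must be non-positive---this is the classical negativity of the intersection form on fibres.'' That is true for surfaces over curves (Zariski's lemma) but is \emph{not} a classical fact in higher relative dimension; it is essentially equivalent to the lemma you are trying to prove. So combining $-D\cdot C\ge 0$ with this ``negativity'' to force $D^-\cdot C=0$ assumes the conclusion.

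The paper's proof is exactly the induction you mention at the end, but carried out as the whole argument rather than a patch. One shrinks $Z$ so that $P=f(\Supp D^-)$ is irreducible. If $\dim P>0$, one cuts $Z$ by a general hyperplane $Z'$ and restricts to $X'=f^{-1}Z'$; the delicate point is verifying that $D|_{X'}^-$ is again very exceptional over $Z'$ and that the nef-on-very-general-curves condition survives (this uses that $X'$ avoids the components of $\Lambda$ and of the pairwise intersections $S_1\cap S_2$). If $\dim P=0$ and $\dim Z\ge 2$, one instead cuts $X$ by a general hyperplane $H$ and uses its Stein factorisation $H\to G$. The genuine base case is $\dim Z=1$, handled by adding $tf^*P$ to $D$ for the minimal $t$ with $D+tf^*P\ge 0$ and producing a curve $C$ in a boundary component of the new support with $D\cdot C>0$, contradicting the hypothesis. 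Your sketch omits the two-pronged cutting (on $Z$ versus on $X$ depending on $\dim P$), the verification that very exceptionality survives restriction, and the base-case argument; these are where all the content lies.
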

\begin{proof}
Assume that $D^-\neq 0$ otherwise there is nothing to prove. By assumptions, 
there is a countable union $\Lambda$ 
of $\codim \ge 2$ proper closed subsets of $X$ such that $\Lambda\subset \Supp D^-$ 
and such that for any component $S$ of $D^-$ and any curve $C$ of $S/Z$ satisfying 
$C\nsubseteq \Lambda$, we have $-D\cdot C\ge 0$. Let 
$P=f(\Supp D^-)$. By shrinking $Z$ if necessary we can assume that $P$ is 
irreducible and that every component of $D^-$ maps onto $P$. When $\dim P>0$ 
we reduce the problem to lower dimension by taking a hyperplane section of $Z$. 
In contrast, when $\dim P=0$ 
we reduce the problem to lower dimension by taking a hyperplane section of $X$ 
except when $\dim Z=1$ or $\dim Z=\dim X=2$ which can be dealt with directly.

Assume that $\dim P>0$. Let $Z'$ be a very general hyperplane section of $Z$, $X'=f^* Z'$, and let 
$f'$ be the induced contraction $X'\to Z'$. Since $Z'$ is very general, it intersects $P$, and 
$X'$ does not contain any component of $D$ nor any "component" of $\Lambda$. 
So, $\Lambda\cap X'$ is again a countable union of $\codim\ge 2$ proper closed subsets 
of $X'$. Since $X'$ is general, it does not contain any component of the singular 
locus of $X$ hence $D|_{X'}$ is determined on the smooth locus of $X$. Similarly, the negative 
part of $D|_{X'}$ is given by $D^-|_{X'}$ defined on the smooth locus of $X$. Moreover, 
we can assume that $X'$ does not contain any component of $S_1\cap S_2$
if $S_1,S_2$ are prime divisors mapping onto $P$.

We show that the negative part of $D|_{X'}$, say $D|_{X'}^-$, 
is very exceptional$/Z'$: if $\codim P\ge 2$, the claim is trivial; if $\codim P=1$, 
we can assume that $Z$ is smooth at every point of $P$ and that 
every component of $D|_{X'}^-$ maps onto $P':=Z'\cap P$; now since $D^-$ is very 
exceptional$/Z$, there is a component $S$ of $f^*P$ mapping onto $P$ 
such that $S$ is not a component 
of $D^-$; let $Q$ be a component of $S\cap X'$ having codimension one in $X'$ and 
mapping onto $P'$; 
then, by our choice of $X'$, $Q$ is not a component of $D|_{X'}^-$. Since $Q$ 
and each component of $D|_{X'}^-$ maps onto $P'$, and since $Q$ is not a component of $D|_{X'}^-$, 
indeed $D|_{X'}^-$ is very exceptional$/Z'$ as claimed.
On the other hand, if $T$ is any  component of $D|_{X'}^-$, then $T$ is a 
component of $L\cap X'$ for some component $L$ of $D^-$ hence $-D|_T$ is nef 
on the curves of $T/Z'$ outside $\Lambda\cap X'$ (note that by construction 
$T\nsubseteq \Lambda$).
By induction on dimension applied to $D|_{X'}$ and $X'\to Z'$, $D|_{X'}^-=0$ which is a contradiction. 

From now on we can assume that $\dim P=0$.  Assume $\dim Z=1$. 
Then $f^*P$ is a divisor which is numerically zero over $Z$. Let $t$ be the smallest 
real number such that $D+tf^*P\ge 0$. Then, there is a component $S$ of $D^-$ 
which has coefficient zero in $D+tf^*P\ge 0$ but such that $S$ intersects  
 $\Supp (D+tf^*P)$. If $C$ is any curve on $S$ which is not inside 
$S\cap \Supp (D+tf^*P)$ but such that $C$ intersects $\Supp (D+tf^*P)$, then 
$D\cdot C=(D+tf^*P)\cdot C>0$. This contradicts our assumptions.
On the other hand, if $\dim X=\dim Z=2$, the lemma 
is quite well-known and elementary and it can be proved similarly. So, from now on 
we assume that $\dim Z\ge 2$, and $\dim X\ge 3$.

Let $H$ be a very general hyperplane section of $X$ and let $g\colon H\to G$ be the 
contraction given by the Stein factorisation of $H\to Z$. Since $P=f(\Supp D^-)$ is 
just a point and $\dim Z\ge 2$ and $\dim X\ge 3$, it is obvious that $D|_H^-$, 
the negative part of $D|_H$, is 
very exceptional$/G$.  Moreover, arguments similar to the above show that 
if $T$ is any component of $D|_H^-$, then $-D|_H$ is nef on the very general curves of 
$T/G$. So, we can apply induction to deduce that  $D|_H^-=0$ which gives a 
contradiction.
\end{proof}

We now come to the LMMP that was mentioned at the beginning of this section. 
It is a simple consequence of the negativity lemma.

\begin{thm}[=Theorem \ref{t-exceptional-LMMP-intro}]\label{t-exceptional-LMMP}
Let $(X/Z,B)$ be a $\Q$-factorial dlt pair such that $K_X+B\sim_\R M/Z$ with 
$M\ge 0$ very exceptional$/Z$. 
Then, any LMMP$/Z$ on $K_X+B$ with scaling of an ample$/Z$ divisor 
terminates with a model $Y$ on which $K_Y+B_Y\sim_\R M_Y=0/Z$.
\end{thm}
\begin{proof}
Assume that $C\ge 0$ is an ample$/Z$ $\R$-divisor such that $K_X+B+C$ is lc and nef$/Z$. 
Run the LMMP$/Z$ on $K_X+B$ with scaling of $C$. The only divisors that can be 
contracted are the components of $M$ hence $M$ remains very exceptional$/Z$ 
during the LMMP. We may assume that the LMMP consists of only a sequence 
$X_i\bir X_{i+1}/Z_i$ of $K_X+B$-flips$/Z$ with $X_1=X$.
 If $\lambda_i$ are the 
numbers appearing in the LMMP, and $\lambda:=\lim_{i\to \infty}\lambda_i$, then by [\ref{BCHM}], 
we may assume that $\lambda=0$. 
Since  $\lambda=0$, $K_X+B$ is a limit of movable$/Z$ $\R$-Cartier divisors hence 
for any prime divisor $S$ on $X$, $(K_X+B)\cdot \Gamma=M\cdot \Gamma\ge 0$ 
for the very general curves $\Gamma$ of $S/Z$. Now by assumptions $M$ is very exceptional$/Z$ 
hence by the negativity lemma (\ref{l-negativity}), $M\le 0$ which implies that $M=0$. 
So, the LMMP terminates and contracts $M$.
\end{proof}

The same arguments as in the previous theorem imply:

\begin{thm}\label{t-exc-LMMP}
Let $(X/Z,B)$ be a $\Q$-factorial dlt pair such that 
 $X\to Z$ is birational, and  $K_X+B\sim_\R M=M^+-M^-/Z$ where 
$M^+,M^-\ge 0$ have no common components and $M^+$ is exceptional$/Z$. 
Then, any LMMP$/Z$ on $K_X+B$ with scaling of an ample$/Z$ divisor contracts 
$M^+$ after finitely many steps.
\end{thm}

The birationality condition on $X\to Z$ in the previous theorem is needed to make sure 
that  $M^+$ remains very exceptional$/Z$ during the LMMP.

An application of the above theorems is the existence of $\Q$-factorial dlt blowups which is due 
to Hacon (cf. Fujino [\ref{Fujino-dlt-blowup}, Theorem 4.1]). 

\begin{cor}\label{c-Q-factorial-blup}
Let $(X/Z,B)$ be a lc pair. Then, there is a $\Q$-factorial dlt blowup of 
$(X/Z,B)$.
\end{cor}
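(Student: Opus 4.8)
The plan is to produce the $\Q$-factorial dlt blowup by resolving, building an auxiliary pair whose discrepancy defect is very exceptional, and then running the LMMP furnished by Theorem \ref{t-exceptional-LMMP}. First I would take a log resolution $g\colon W\to X$ of $(X/Z,B)$ and let $(W/Z,B_W)$ be the type (1) log smooth model, so that $B_W=B^\sim+E$ with $E$ the reduced $g$-exceptional divisor and $K_W+B_W=g^*(K_X+B)+F$ where $F\ge 0$ is $g$-exceptional (this uses that $(X/Z,B)$ is lc, so all log discrepancies are $\ge 0$). The pair $(W/Z,B_W)$ is $\Q$-factorial dlt by construction. The divisor $F$ is exceptional$/X$, hence vertical$/X$; the point of choosing the type (1) model is precisely that $F$ is also very exceptional$/X$, since for every prime divisor $P$ on $X$ we have $\Supp g^*P\nsubseteq\Supp F$ as $F$ is $g$-exceptional while $g^*P$ has the strict transform of $P$ as a component. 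So $K_W+B_W\sim_\R F/X$ with $F\ge 0$ very exceptional$/X$.

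Next I would apply Theorem \ref{t-exceptional-LMMP} to $(W/X,B_W)$: any LMMP$/X$ on $K_W+B_W$ with scaling of a suitable ample$/X$ $\R$-divisor terminates with a model $Y/X$ on which $K_Y+B_Y\sim_\R F_Y=0/X$, where $B_Y$ is the birational transform of $B_W$. Here I should note that the LMMP$/X$ exists in the required form: since $K_W+B_W$ is not necessarily nef$/X$ one first adds an ample$/X$ divisor $C$ with $K_W+B_W+C$ lc and nef$/X$, which is possible as $X$ is quasi-projective. The output $(Y/X,B_Y)$ is $\Q$-factorial dlt (the LMMP with scaling preserves $\Q$-factoriality and dlt), and since $F_Y=0$ we get $K_Y+B_Y\sim_\R 0/X$, in particular $K_Y+B_Y=h^*(K_X+B)$ where $h\colon Y\to X$ is the induced morphism. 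The divisors contracted by the LMMP are all components of $F$, hence $h$-exceptional, so the prime divisors on $Y$ that are exceptional$/X$ are exactly the surviving components of $E$, and by the type (1) condition they appear in $B_Y$ with coefficient $1$. Thus $B_Y$ is the birational transform of $B$ plus the reduced $h$-exceptional divisor, which is exactly what is required of a dlt blowup, and the discrepancy identity $a(D,Y,B_Y)=a(D,X,B)$ for all $D$ holds because $K_Y+B_Y=h^*(K_X+B)$.

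The main obstacle, and the only real content, is ensuring that $F$ genuinely satisfies the very-exceptionality hypothesis of Theorem \ref{t-exceptional-LMMP} over the right base. The subtlety is that the theorem is applied with $X$ (not $Z$) as the base: $F$ is very exceptional$/X$, not necessarily very exceptional$/Z$, and indeed it need not be, so the LMMP must be run over $X$. One must also be a little careful that $F$ really has the property $mF=\Fix(m(g^*H)+mF)$-type behaviour implicitly needed, or rather simply verify directly from Definition \ref{d-v-exceptional} that a $g$-exceptional effective divisor on a log resolution is automatically very exceptional$/X$, since over the generic point of any prime divisor $P$ on $X$ the resolution is an isomorphism away from codimension $\ge 2$, so $g^*P$ contains the strict transform $P^\sim$ which is not a component of $F$. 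Granting this, everything else is bookkeeping with log discrepancies, and the corollary follows. One could alternatively invoke Hacon's construction of dlt blowups directly, but the argument via Theorem \ref{t-exceptional-LMMP} is self-contained within this paper.
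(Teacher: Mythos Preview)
Your proposal is correct and follows essentially the same route as the paper: take a type (1) log smooth model, observe that the discrepancy divisor is effective and exceptional (hence very exceptional) over $X$, and apply Theorem \ref{t-exceptional-LMMP} over $X$ to contract it. You supply more detail than the paper (explicitly checking very-exceptionality and unpacking the structure of $B_Y$), but the argument is the same.
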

\begin{proof}
Let $(W/Z,B_W)$ be a log smooth model of $(X/Z,B)$ of type (1) as in Definition \ref{d-log-smooth-model} 
constructed via a log resolution $f\colon W\to X$. Then, 
$K_W+B_W=f^*(K_X+B)+E$ where $E\ge 0$ is exceptional$/X$. Now, 
by Theorem \ref{t-exceptional-LMMP} some LMMP$/X$ on $K_W+B_W$ with scaling of an ample divisor 
terminates with a model $Y$ on which $K_Y+B_Y\sim_\R 0/X$. If $g\colon Y\to X$ is 
the induced morphism, then $K_Y+B_Y=g^*(K_X+B)$ because $E$ is contracted over $Y$. 
Now $(Y/Z,B_Y)$ is the desired $\Q$-factorial dlt blowup.
\end{proof}

\begin{cor}\label{c-wlc-to-mmodel}
Let $(X/Z,B)$ be a lc pair. If $(X/Z,B)$ has a weak lc model, then it has a log minimal model. 
\end{cor}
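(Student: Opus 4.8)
The plan is to reduce, by passing to a log smooth model, to a situation where Theorem~\ref{t-exceptional-LMMP} applies directly. Let $(Y/Z,B_Y)$ be a weak lc model of $(X/Z,B)$, with the accompanying birational map $X\bir Y$. First I would take a log resolution $g\colon W\to X$ of $(X,B)$ that also resolves $X\bir Y$, so that the composition $W\to X\bir Y$ is a morphism $h\colon W\to Y$, and I would let $B_W$ be the birational transform of $B$ plus the reduced $g$-exceptional divisor. Then $(W/Z,B_W)$ is a $\Q$-factorial dlt log smooth model of $(X/Z,B)$ of type (1), so $K_W+B_W=g^*(K_X+B)+E$ for some $E\ge 0$ exceptional$/X$, and by the remark following Definition~\ref{d-log-smooth-model} it is enough to construct a log minimal model of $(W/Z,B_W)$.

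Next I would put $F:=K_W+B_W-h^*(K_Y+B_Y)$ and show that $F\ge 0$ and that $F$ is very exceptional$/Y$. Writing $F=E+G$ with $G:=g^*(K_X+B)-h^*(K_Y+B_Y)$, the divisor $-G$ is nef$/X$ (since $h^*(K_Y+B_Y)$ is the pullback of a divisor nef$/Z$, hence nef$/X$, while $g^*(K_X+B)\equiv 0/X$), and $g_*G$ is effective by the discrepancy inequality in the definition of a weak lc model, which forces the negative part $G^-$ of $G$ to be very exceptional$/X$; so the negativity lemma (Lemma~\ref{l-negativity}) applied over $X$ gives $G\ge 0$, whence $F\ge 0$ (cf.\ [\ref{B}, Remark~2.6]). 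For very exceptionality I would check that every prime divisor $S$ on $Y$ has birational transform $S_W$ on $W$ with coefficient $0$ in $F$: with compatibly chosen canonical divisors this coefficient equals $\mult_{S_W}B_W-\mult_S B_Y$, and a short case check — according to whether $S_W$ is $g$-exceptional, the point being that if it is then $S$ is a component of the reduced exceptional divisor of $Y\bir X$ and both coefficients equal $1$ — shows that it vanishes. Hence no component of $F$ maps onto a divisor on $Y$, so $F$ is vertical$/Y$, and taking $Q=S_W$ for each prime divisor $S$ on $Y$ exhibits $F$ as very exceptional$/Y$.

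Now $K_W+B_W\sim_\R F/Y$ with $F\ge 0$ very exceptional$/Y$ and $(W/Y,B_W)$ $\Q$-factorial dlt, so Theorem~\ref{t-exceptional-LMMP} applied over $Y$ shows that any LMMP$/Y$ on $K_W+B_W$ with scaling of an ample divisor terminates with a model $W\bir Y'$ on which the birational transform of $F$ is $0$; since the relation between $K_W+B_W$, the pullback of $K_Y+B_Y$, and $F$ is preserved by each step of the LMMP, this gives $K_{Y'}+B_{Y'}={h'}^*(K_Y+B_Y)$ for the induced morphism $h'\colon Y'\to Y$. I would then verify that $(Y'/Z,B_{Y'})$ is a log minimal model of $(W/Z,B_W)$: it is $\Q$-factorial dlt; the map $W\bir Y'$ is a composition of flips and divisorial contractions, so it extracts no divisor and $(Y'/Z,B_{Y'})$ is a log birational model of $(W/Z,B_W)$ with $B_{Y'}=B_W^\sim$; the divisor $K_{Y'}+B_{Y'}={h'}^*(K_Y+B_Y)$ is nef$/Z$ because $K_Y+B_Y$ is; and a prime divisor $D$ on $W$ is exceptional$/Y'$ precisely when it is contracted somewhere in the LMMP, in which case $a(D,W,B_W)<a(D,Y',B_{Y'})$ by the standard behaviour of discrepancies under flips and divisorial contractions. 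Thus $(Y'/Z,B_{Y'})$ is a log minimal model of $(W/Z,B_W)$, hence of $(X/Z,B)$ by the remark following Definition~\ref{d-log-smooth-model}.

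The substantive input is Theorem~\ref{t-exceptional-LMMP}, which supplies both the termination of the LMMP$/Y$ on $K_W+B_W$ and the fact that $F$ gets contracted to $0$; granting it, the rest is bookkeeping with discrepancies of the same flavour as in the construction of $\Q$-factorial dlt blowups (Corollary~\ref{c-Q-factorial-blup}).
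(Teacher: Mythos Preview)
Your proof is correct and follows essentially the same approach as the paper: pass to a log smooth model dominating the weak lc model, write $K_W+B_W$ as the pullback of $K_Y+B_Y$ plus an effective divisor that is exceptional$/Y$, and then apply Theorem~\ref{t-exceptional-LMMP} over $Y$. The paper's proof is considerably terser—it simply asserts that after the replacement one has $K_X+B=\phi^*(K_{Y'}+B_{Y'})+E$ with $E\ge 0$ exceptional$/Y'$—whereas you spell out the negativity-lemma argument for $F\ge 0$ and the coefficient check showing $F$ is exceptional$/Y$, as well as the verification that the output is a log minimal model over $Z$ (not just over $Y$); these are exactly the details the paper leaves implicit.
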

\begin{proof}
Assume that $(Y'/Z,B_{Y'})$ is a weak lc model of $(X/Z,B)$. Let $(W/Z,B_W)$  
be a  log smooth model of $(X/Z,B)$ of type (1) constructed via a log resolution $f\colon W\to X$.
We can assume that the induced map $g\colon W\bir Y'$ is a morphism.
We can write 
$$
K_W+B_W=f^*(K_X+B)+E
$$
and 
$$
f^*(K_X+B)=g^*(K_{Y'}+B_{Y'})+G
$$
where $E\ge 0$ is exceptional$/X$ and $G\ge 0$ is exceptional$/Y'$ (see Remark \ref{rem-on-m-models}).
Now, $K_W+B_W\sim_\R G+E/Y'$. Moreover, $G+E$ is exceptional$/Y'$: $G$ is exceptional$/Y'$ 
as noted; on the other hand if $D$ is a component of $E$, then $a(D,X,B)>0$ which implies that $D$ is 
exceptional$/Y'$ otherwise $0=a(D,Y',B_{Y'})=a(D,X,B)$; so, $E$ is also exceptional$/Y'$.
 
By Theorem  \ref{t-exceptional-LMMP}, some LMMP$/Y'$ on $K_W+B_W$ with scaling of an ample divisor 
terminates with a model $Y$ on which $K_Y+B_Y\sim_\R 0/Y'$. This means that $K_Y+B_Y$ is the 
pullback of $K_{Y'}+B_{Y'}$ hence $K_Y+B_Y$ is nef$/Z$ and $(Y/Z,B_Y)$ is a log minimal model 
of $(W/Z,B_W)$ hence of $(X/Z,B)$ by Remark \ref{rem-log-smooth-model}. 
\end{proof}

\vspace{0.3cm}
\section{\textbf{From log minimal models to termination with scaling}}

In this section we prove Theorem \ref{t-mmodel-term-scaling-intro} which 
is essentially [\ref{B-II}, Theorem 1.5] but with weaker assumptions.  
The theorem is of independent interest and it will be used repeatedly in subsequent sections.
The proof follows the general idea of deriving termination with scaling from existence 
of log minimal models that was developed in [\ref{BCHM}] and  [\ref{Shokurov-ordered-termination}]. 
In [\ref{BCHM}], finiteness of models is 
used to get termination. However, this works only in the klt case when the boundary is big, 
or in the dlt case when  the boundary contains an ample divisor.
The proof below is closer to [\ref{Shokurov-ordered-termination}] in spirit but we still make 
use of [\ref{BCHM}]. Much of the difficulties in the proof are caused 
by the presence of non-klt singularities.

\begin{thm}[=Theorem \ref{t-mmodel-term-scaling-intro}]\label{t-mmodel-term-scaling}
 Let $(X/Z,B+C)$ be a lc pair of dimension $d$ such that $K_X+B+C$ is nef$/Z$, 
 $B,C\ge 0$ and $C$ is $\R$-Cartier.
Assume that we are given an LMMP$/Z$ on $K_X+B$ with scaling of $C$ as in 
Definition \ref{d-LMMP-scaling} with $\lambda_i$ the corresponding numbers, 
and $\lambda:=\lim_{i\to \infty}\lambda_i$. 
Then, the LMMP terminates in the following cases:

${\rm (i)}$ $(X/Z,B)$ is $\Q$-factorial dlt, $B\ge H\ge 0$ for some ample$/Z$ 
$\R$-divisor $H$,

${\rm (ii)}$ $(X/Z,B)$ is $\Q$-factorial dlt, 
$C\ge H\ge 0$ for some ample$/Z$ $\R$-divisor $H$, and $\lambda>0$,

${\rm (iii)}$ $(X/Z,B+\lambda C)$  has a log minimal model, and $\lambda\neq \lambda_j$ for any $j$.
\end{thm}

The following two lemmas essentially  contain the main points of the proof of Theorem \ref{t-mmodel-term-scaling}. 
We will reduce the proof of Theorem \ref{t-mmodel-term-scaling} 
to these lemmas.
When $(X/Z,B+C)$ 
is $\Q$-factorial klt, then one can easily see that the assumptions of 
Lemma \ref{l-mmodel-term-scaling-2} 
are automatically satisfied, 
so in this case we do not need to go beyond 
the lemmas. 

\begin{lem}\label{l-mmodel-term-scaling}
Theorem \ref{t-mmodel-term-scaling} ${\rm (i)}$ and ${\rm (ii)}$ hold. 
\end{lem}
\begin{proof}
(i)  Since $H$ 
is ample$/Z$, we can perturb the coefficients of $B$ hence assume that 
$(X/Z,B)$ is klt. If $\lambda_i<1$ for some $i$, then 
$({X}/Z,B+\lambda_iC)$ is klt and after finitely many steps we 
could replace $C$ with $\lambda_iC$ hence we could assume that $(X/Z,B+C)$ 
is klt and $B$ is big$/Z$. We can then apply [\ref{BCHM}]. Now, 
assume that $\lambda_i=1$ for every $i$. Then, the LMMP is also an LMMP$/Z$ 
on $K_{X}+B+\frac{1}{2} C$ with scaling of $\frac{1}{2}C$.
By replacing $B$ with $B+\frac{1}{2} C$ and $C$ with $\frac{1}{2}C$,  we can assume that 
every lc centre of $({X}/Z,B+C)$ is inside $\Supp B$. Now perturb 
$B$ again so that $(X/Z,B+C)$ becomes klt and apply [\ref{BCHM}]. 

(ii) The LMMP is also an LMMP$/Z$ on $K_{X}+B+\frac{\lambda}{2} C$
with scaling of $(1-\frac{\lambda}{2})C$. We can replace $B$ with $B+\frac{\lambda}{2} C$, 
$C$ with $(1-\frac{\lambda}{2})C$, and  $\lambda_i$ with 
$\frac{\lambda_i-\frac{\lambda}{2}}{1-\frac{\lambda}{2}}$. After this change, 
 we can assume that $B\ge \frac{\lambda}{2}H$. Now use (i).\\
\end{proof}

\begin{lem}\label{l-mmodel-term-scaling-2}
Theorem \ref{t-mmodel-term-scaling} ${\rm (iii)}$ holds if there is a 
birational map $\phi\colon X\bir Y/Z$ satisfying:

$\bullet$  the induced maps $X_i\bir Y$ are isomorphisms in codimension one for every $i\gg 0$
where $X_i$ is the variety corresponding to $\lambda_i$,

$\bullet$ $(Y/Z,B_Y+\lambda C_Y)$ is a log minimal model of $(X/Z,B+\lambda C)$ 
with respect to the given map $\phi$,

$\bullet$ there is a reduced divisor $A\ge 0$ on $X$ whose components are movable$/Z$ 
and they generate $N^1(X/Z)$,

$\bullet$ $(X/Z,B+C+\epsilon A)$ and 
$(Y/Z,B_Y+\lambda C_Y+\delta C_{Y}+\epsilon A_Y)$ are $\Q$-factorial dlt for some $\delta, \epsilon>0$.

(As usual, for a divisor $D$,  $D_Y$ denotes the birational transform on $Y$.) 
\end{lem}
\begin{proof}
\emph{Step 1.} Assume that the LMMP does not terminate.
Pick $j\gg 0$ so that $\lambda_{j-1}>\lambda_{j}$. Then, $X\bir X_j$ is a 
partial LMMP$/Z$ on $K_X+B+\lambda_j C$. It is also a partial LMMP$/Z$ 
on  $K_X+B+\lambda_j C+\epsilon A$ maybe after choosing a smaller $\epsilon>0$. 
In particular, $(X_j/Z,B_j+\lambda_j C_j+\epsilon A_j)$ is $\Q$-factorial dlt.
Now we can replace $(X/Z,B+C)$ with $(X_j/Z,B_j+\lambda_j C_j)$ hence assume that  
the LMMP consists of only a sequence $X_i\bir X_{i+1}/Z_i$ of log flips$/Z$ 
starting with $(X_1/Z,B_1)=(X/Z,B)$.
Moreover, by replacing $B$ with $B+\lambda C$ we may also assume that $\lambda=0$.

We will reduce the problem to the situation in which not only $K_Y+B_Y$ 
is nef$/Z$ but also $K_Y+B_Y+\lambda_i C_Y$ is nef$/Z$ for some $i$. 
After that some simple calculations allow us to show that the LMMP terminates (see 
Step 5). 

\emph{Step 2.} We will show that, perhaps after replacing $\delta$ and $\epsilon$ with 
smaller positive numbers, for any number $\delta'$ and $\R$-divisor $A_Y'$ satisfying $0\le \delta'\le \delta$ and $0\le A_Y'\le \epsilon A_Y$, any 
 LMMP$/Z$ on $K_Y+B_Y+\delta' C_Y+A_Y'$ consists of only a sequence of log flips which are flops with respect to $(Y/Z,B_Y)$ (that is, $K_Y+B_Y$ is numerically trivial on each extremal ray that is contacted in the process). 
First we show that $K_Y+B_Y+\delta' C_Y+A_Y'$ is a limit of movable$/Z$ $\R$-divisors.  
Since $\delta$ is sufficiently small, we can assume that  
$\lambda_{i-1}\ge \delta'\ge \lambda_i$ for some $i$.  
By definition of LMMP with scaling, $K_{X_i}+B_i+\lambda_{i-1} C_i$ 
and $K_{X_i}+B_i+\lambda_{i} C_i$ are both nef$/Z$ hence $K_{X_i}+B_i+\delta' C_i$ 
is also nef$/Z$. Thus, 
$K_{X_i}+B_i+\delta' C_i$ is (numerically) a limit of movable$/Z$ $\R$-divisors which in turn 
implies that  $K_Y+B_Y+\delta' C_Y$ is also a limit of movable$/Z$ $\R$-divisors. 
On the other hand, each component of $A_Y$ is movable$/Z$ so $A_Y'$ is 
a movable$/Z$ $\R$-divisor. Therefore, $K_Y+B_Y+\delta' C_Y+A_Y'$ is a limit of movable$/Z$ $\R$-divisors. 
This implies that any 
 LMMP$/Z$ on $K_Y+B_Y+\delta' C_Y+A_Y'$ consists of only a sequence of log flips. 
Finally, since $K_Y+B_Y$ is nef$/Z$, by [\ref{B-II}, Proposition 3.2], 
perhaps after replacing $\delta$ and $\epsilon$ with 
smaller positive numbers, any step of such an LMMP would be a flop with respect to $(Y/Z,B_Y)$. 

\emph{Step 3.}
 Fix some $i\gg 0$ so that $\lambda_i<\delta$. 
Since $X\bir X_i$ is a 
partial LMMP$/Z$ on $K_X+B+\lambda_i C$, 
 there is $0<\tau < \epsilon$ 
such that $(X_i/Z,B_i+\lambda_i C_i+\tau A_i)$ is dlt. 
Run an LMMP$/Z$ 
on  $K_{X_i}+B_i+\lambda_i C_i+\tau A_i$ with scaling of some ample$/Z$ divisor.
For reasons as in Step 2, 
we can assume that this LMMP consists of only a  
sequence of log flips which are flops with respect to $(X_i/Z,B_i+\lambda_i C_i)$. 
On the other hand, since the components of $A_i$ generate $N^1(X_i/Z)$, we can assume that there is an ample$/Z$ $\R$-divisor 
$H\ge 0$ such that $\tau A_i \equiv H+H'/Z$ where $H'\ge 0$ and 
$$
(X_i/Z,B_i+\lambda_i C_i+H+H')
$$
is dlt. The LMMP on $K_{X_i}+B_i+\lambda_i C_i+\tau A_i$ is also an LMMP on 
$$
K_{X_i}+B_i+\lambda_i C_i+H+H'
$$ hence it 
terminates by Lemma \ref{l-mmodel-term-scaling}
and we get a model $T$ on which both 

$$
K_T+B_T+\lambda_i C_T 
~~~~\mbox{and}~~~~ 
K_T+B_T+\lambda_i C_T+\tau A_T
$$ 
are nef$/Z$.  Again since the components of $A_T$ generate $N^1(T/Z)$,  
there is $0\le A'_T\le \tau A_T$ so that $K_T+B_T+\lambda_i C_T+ A_T'$ is ample$/Z$ and $\Supp A_T'=\Supp A_T$.

\emph{Step 4.} Now run an LMMP$/Z$ on 
$K_Y+B_Y+\lambda_i C_Y+A'_Y$ with scaling of some ample$/Z$ divisor  where $A_Y'$ 
is the birational tranform of $A_T'$. By Step  2, the LMMP
consists of only a sequence of log flips which are flops with respect to $(Y/Z,B_Y)$, 
and it terminates for reasons similar to Step 3. Actually, the LMMP terminates on 
 $T$ because $K_T+B_T+\lambda_i C_T+ A_T'$ is ample$/Z$.  So, by replacing $Y$ with $T$ we 
 could from now on assume that $K_Y+B_Y+\lambda_i C_Y$ 
is nef$/Z$. In particular, $K_Y+B_Y+\lambda_j C_Y$ is nef$/Z$ for any $j\ge i$ since $\lambda_j\le \lambda_i$ 
and since $K_Y+B_Y$ is nef$/Z$. 

\emph{Step 5.} Pick $j> i$ so that $\lambda_j<\lambda_{j-1}$ and let $r\colon U\to X_j$ and $s\colon U\to Y$ 
be a common resolution. Then,  
\begin{equation*}
\begin{split}
r^*(K_{X_j}+B_j+\lambda_j C_j) & = s^*(K_Y+B_Y+\lambda_j C_Y),\\ 
r^*(K_{X_j}+B_j) & \gneq s^*(K_Y+B_Y),\\
r^*C_j & \lneq  s^* C_Y
\end{split}
\end{equation*}
where the first equality holds because both $K_{X_j}+B_j+\lambda_j C_j$ and $K_Y+B_Y+\lambda_j C_Y$ 
are nef$/Z$ and $X_j$ and $Y$ are isomorphic in codimension one, the second inequality holds 
by Remark \ref{rem-on-m-models} because $K_Y+B_Y$ is nef$/Z$ but $K_{X_j}+B_j$ is not nef$/Z$, and the third claim follows from the other two. Now
\begin{equation*}
\begin{split}
r^*(K_{X_j}+B_j+\lambda_{j-1} C_j) 
 & = r^*(K_{X_j}+B_j+\lambda_{j} C_j)+ r^*(\lambda_{j-1}-\lambda_j)C_j\\
& \lneq  s^*(K_Y+B_Y+\lambda_{j} C_Y)+s^*(\lambda_{j-1}-\lambda_j)C_Y\\
& =  s^*(K_Y+B_Y+\lambda_{j-1} C_Y)
\end{split}
\end{equation*}

However, since $K_{X_j}+B_j+\lambda_{j-1} C_j$ 
and  $K_Y+B_Y+\lambda_{j-1} C_Y$ are both nef$/Z$, we have 
$$
r^*(K_{X_j}+B_j+\lambda_{j-1} C_j) =s^*(K_Y+B_Y+\lambda_{j-1} C_Y)
$$ 
This is a contradiction and the sequence of log flips terminates as claimed.
\end{proof}

The reader may want to have a look at Remark \ref{rem-lifting-flips} before reading the 
rest of this section.

\begin{proof}(of Theorem \ref{t-mmodel-term-scaling})
\emph{Step 1.} In view of Lemma \ref{l-mmodel-term-scaling} it is enough to 
treat the case ${\rm (iii)}$. We can replace $B$ with $B+\lambda C$ hence 
assume that $\lambda=0$. Moreover, we may assume that the LMMP consists of 
only a sequence $X_i\bir X_{i+1}/Z_i$ of log flips starting with 
$(X_1/Z,B_1)=(X/Z,B)$. Pick $i$ so that $\lambda_i>\lambda_{i+1}$. Then, $\Supp C_{i+1}$ does not 
contain any lc centre of $(X_{i+1}/Z,B_{i+1}+\lambda_{i+1}C_{i+1})$ because  $(X_{i+1}/Z,B_{i+1}+\lambda_{i}C_{i+1})$ is 
lc. Thus, by replacing $(X/Z,B)$ with $(X_{i+1}/Z,B_{i+1})$ 
and $C$ with $\lambda_{i+1}C_{i+1}$ we may 
assume that no lc centre of $(X/Z,B+C)$ is inside $\Supp C$. Moreover, since 
there are finitely many lc centres of $(X/Z,B)$, perhaps after truncating the 
sequence, we can assume that no lc centre is contracted in the 
sequence. We will reduce the problem to the situation of Lemma \ref{l-mmodel-term-scaling-2}.

\emph{Step 2.} 
 By assumptions, there is a log minimal model $(Y/Z,B_Y)$ for $(X/Z,B)$.
Let $\phi \colon X\bir Y/Z$ be the corresponding birational map.
Let $f\colon W\to X$ and $g\colon W\to Y$ 
be a common log resolution of $(X/Z,B+C)$ and $(Y/Z,B_Y+C_Y)$ where $C_Y$ is the birational transform of $C$. 
By Remark \ref{rem-on-m-models}, 
$$
E:=f^*(K_X+B)-g^*(K_Y+B_Y)
$$
is effective and exceptional$/Y$.
Let $B_W$ be the birational transform of $B$ plus the reduced exceptional divisor of $f$, 
and let $C_W$ be the 
birational transform of $C$. Pick a sufficiently small $\delta\ge 0$.
Since $(X/Z,B)$ is lc,
$$
E':=K_W+B_W-f^*(K_X+B)
$$
is effective and exceptional$/X$. Actually, $E'$ is also exceptional$/Y$ 
because if $D$ is a component of $E'$ which is not exceptional$/Y$, then 
$$
0=a(D,Y,B_Y)\ge a(D,X,B)\ge 0
$$ 
which is a contradiction since $a(D,X,B)=0$ means that $D$ cannot be a component of $E'$. 

On the other hand, since $Y$ is $\Q$-factorial, there is 
an exceptional$/Y$ $\R$-divisor $F$ on $W$ such that  $C_W+F\equiv 0/Y$.
So, 
$$
K_W+B_W+\delta C_W\equiv E+E'+\delta C_W\equiv E+E'-\delta F/Y
$$
and since $\delta$ is sufficiently small, the support of $E+E'$ is contained in the 
 support of the effective part of $E+E'-\delta F$. 
Now by Theorem \ref{t-exc-LMMP}, if we run an LMMP$/Y$ on $K_W+B_W+\delta C_W$ with scaling of an ample  
divisor, then we arrive at a model $V$ on which $E_{V}+E'_{V}=0$, that is, 
$E+E'$ is contracted over $V$.

\emph{Step 3.} 
We prove that $\phi\colon X\bir Y$ does not contract any divisors. 
Assume otherwise and let 
$D$ be a prime divisor on $X$ contracted by $\phi$. Then $D^\sim$ the birational transform of 
$D$ on $W$ is a component of $E$ because by definition of log minimal models $a(D,X,B)<a(D,Y,B_Y)$.
Now, in Step 2 take $\delta=0$. The LMMP contracts $D^\sim$  
since $D^\sim$ is a component of $E$ and $E$ is contracted. This is not possible as 
we can see as follows. Since $({X}/Z,B+\lambda_i C)$ is lc,
$$
K_W+B_W+\lambda_i C_W-f^*(K_{X}+B+\lambda_i C)
$$
is effective and exceptional$/X$. On the other hand, $({X_i}/Z,B_i+\lambda_i C_i)$ 
is a weak lc model of $({X}/Z,B+\lambda_i C)$ hence 
$f^*(K_{X}+B+\lambda_i C)\ge M$
where $M$ is the pullback of $K_{X_i}+B_i+\lambda_i C_i$ under $W\bir X_i$, that is, 
$$
M=p_*q^*(K_{X_i}+B_i+\lambda_i C_i)
$$ 
for some common resolution $p\colon W'\to W$ and 
$q\colon W'\to X_i$.
Therefore, 
$$
K_W+B_W+\lambda_i C_W= M+G
$$ 
where $G$ is effective and exceptional$/X$. Since $K_{X_i}+B_i+\lambda_i C_i$ 
is nef$/Z$, it is (numerically) a limit of movable$/Z$ $\R$-divisors hence 
$M$ is a limit of movable$/Z$ $\R$-divisors. If $\lambda_i$ is sufficiently small 
then $W\bir V$ (of Step 2) is a partial LMMP$/Z$ on $K_W+B_W+\lambda_i C_W$.
But since $G$ is exceptional$/X$ and since $D$ is a divisor on $X$, $D^\sim$ is not a component of 
$G$ hence $D^\sim$ cannot be contracted over $V$ by the LMMP of Step 2, a contradiction. Thus $\phi$ does not contract divisors.

\emph{Step 4.} We will construct a $\Q$-factorial dlt blowup of $(Y/Z,B_Y)$ as follows. 
 In Step 2, take $\delta>0$ which is sufficiently small by assumptions.  
Let $Y'$ be the model $V$ obtained. Since $X\bir Y$ does not contract divisors, 
by construction, each prime exceptional$/Y$  divisor on $Y'$ appears with 
coefficient one in $B_{Y'}$.  So, in view of 
$$
K_{Y'}+B_{Y'}\equiv E_{Y'}+E'_{Y'}=0/Y
$$  
we deduce that $K_{Y'}+B_{Y'}$ is the pullback of $K_{Y}+B_{Y}$ and that 
$(Y'/Z,B_{Y'})$ is a $\Q$-factorial dlt blowup of $(Y/Z,B_Y)$. 
Moreover, $(Y'/Z,B_{Y'}+\delta C_{Y'})$ is dlt.

\emph{Step 5.} We will construct a $\Q$-factorial dlt blowup of $(X/Z,B)$ as follows. 
Since $(X/Z,B+C)$ is lc,
$$
E'':=K_W+B_W+C_W-f^*(K_X+B+C)
$$
is effective and exceptional$/X$.
Run an LMMP$/X$ on  
$K_W+B_W+C_W$ with scaling of an ample$/X$ divisor. Since  
$K_W+B_W+C_W\equiv E''/X$, by Theorem \ref{t-exceptional-LMMP}, the LMMP 
terminates on a model $X'$. 
In particular, $(X'/Z,B'+C')$ is a $\Q$-factorial dlt blowup of $(X/Z,B+C)$ 
where $B'$ is the pushdown of $B_W$ and $C'$ is the pushdown of $C_W$. 
Moreover, the LMMP does not contract an exceptional prime divisor $D$ of $W\to X$
iff $a(D,X,B+C)=0$. 
Since $\Supp C$ does not contain any lc centre of $(X/Z,B+C)$ by Step 1, 
 the LMMP does not contract an exceptional prime divisor $D$ of $W\to X$
iff $a(D,X,B)=0$. Therefore, 
$(X'/Z,B')$ is a $\Q$-factorial dlt blowup of $(X/Z,B)$ and $C'$ is the 
pullback of $C$. 
Note that the prime exceptional divisors of  $\phi^{-1}$ 
are not contracted$/X'$ since their log discrepancies with respect to $(X/Z,B)$ are all $0$.

\emph{Step 6.} 
By Remark \ref{rem-lifting-flips} (1), we can lift the sequence $X_i\bir X_{i+1}/Z_i$ 
to an LMMP$/Z$ on $K_{X'}+B'$ with scaling of $C'$ if the property $(*)$ 
of the remark holds. If $(*)$ holds, then continue from the next paragraph. 
But if $(*)$ does not hold for some $i$, then we can replace  our sequence $X_i\bir X_{i+1}/Z_i$ 
with the LMMP in $(*)$ and repeat Steps 1-5 again. In particular, we could assume that 
each $(X_i/Z,B_i)$ is $\Q$-factorial dlt. But in that case, 
by Remark \ref{rem-lifting-flips} (2), $(*)$ holds so we will not need to 
deal with $(*)$ again.

From now on we assume that we have lifted $X_i\bir X_{i+1}/Z_i$ 
 to  an LMMP$/Z$ on $K_{X'}+B'$ with scaling of $C'$. 
We show that $Y'\bir X'$ does not contract divisors. 
Suppose that $D$ is a prime divisor on $Y'$ which is exceptional$/X'$. 
If $D$ is not exceptional$/Y$, then $D$ is an exceptional divisor of $\phi^{-1}$
and this contradicts the last sentence of Step 5. Thus, $D$ is exceptional$/Y$. But since 
$(Y'/Z,B_{Y'})$ is a $\Q$-factorial dlt blowup of $(Y/Z,B_Y)$, $a(D,Y,B_Y)=0$ which 
in turn implies that $a(D,X,B)=0$ and again Step 5 gives a contradiction. 

 We show that $X'\bir Y'$ also does not contract divisors. By Step 3, it is 
enough to show that $(Y'/Z,B_{Y'})$ is a log minimal model of $(X'/Z,B')$. 
By construction, $B_{Y'}$ is the pushdown of $B'$. So, it remains to 
 compare log discrepancies. Assume that $D$ is a 
prime divisor on $X'$ which is exceptional$/Y'$. 
Since $X\bir Y$ does not contract divisors by Step 3, $D$ is exceptional$/X$. 
In particular, $a(D,X',B')=a(D,X,B)=0$. If $a(D,Y',B_{Y'})=0$, then $a(D,Y,B_Y)=0$ hence   
the birational transform of 
$D$ cannot be a component of $E+E'+\delta C_W$ in Step 2 so it could not be contracted 
over $Y'$ which is a contradiction. Therefore,  $a(D,Y',B_{Y'})>0$. Thus, 
for every prime exceptional divisor of $X'\bir Y'$ we have shown that 
 $a(D,X',B')<a(D,Y',B_{Y'})$ which implies that 
 $(Y'/Z,B_{Y'})$ is a log minimal model of $(X'/Z,B')$ by construction.

 \emph{Step 7.} 
 Let $A\ge 0$ be a reduced divisor on $W$ whose components 
are general ample$/Z$ divisors such that they generate $N^1(W/Z)$. By Step 5, $X'$ is obtained 
by running some LMMP on 
$K_W+B_W+C_W$. This LMMP is a partial LMMP on $K_W+B_W+C_W+\epsilon A$ 
for any sufficiently small $\epsilon>0$. In particular, $(X'/Z,B'+C'+\epsilon A')$ is dlt 
where $A'$ is the birational transform of $A$. 
For similar reasons, we can choose $\epsilon$ so that 
$(Y'/Z,B_{Y'}+\delta C_{Y'}+\epsilon A_{Y'})$ is also dlt. 
Now apply Lemma \ref{l-mmodel-term-scaling-2} to the LMMP$/Z$ on $K_{X'}+B'$ with scaling of $C'$ 
of Step 6. 
\end{proof}

\vspace{0.3cm}
\section{\textbf{Shokurov bss-ampleness, finite generation, and the klt case}}

The next theorem shows that  
more general versions of Theorem \ref{conj-main-general} and Theorem 
\ref{t-acc-main-general-modified} hold when $(X/Z,B)$ has a 
finitely generated algebra $\mathcal{R}(X/Z,K_X+B)$. The result is an easy consequence of  
Lemma \ref{l-v-exceptional} and Lemma \ref{l-negativity} and it should be 
considered as a very special case of Shokurov's attempt in relating 
bss-ampleness and finite generation [\ref{pl-flips}, Theorem 3.18](see also 
[\ref{Lai}][\ref{Fujino-dlt-blowup}] for more recent adaptations). Though we are not using  
Shokurov's terminology of bss-ample divisors but the next theorem is saying that 
finite generation implies bss-ampleness in the specific situation we are 
concerned with. Shokurov proves that in general finite generation together 
with the so-called global almost generation property implies bss-ampleness.

\begin{thm}(=Theorem \ref{t-main-under-fg-intro})\label{t-main-under-fg}
Let $(X/Z,B)$ be a $\Q$-factorial dlt pair where $B$ is a $\Q$-divisor and $f\colon X\to Z$ is surjective. 
Assume further that $\mathcal{R}(X/Z,K_X+B)$ is a finitely generated $\mathcal{O}_Z$-algebra, 
and that $(K_X+B)|_{X_\eta}\sim_\Q 0$ where $X_\eta$ is the generic fibre of $f$. 
Then, any LMMP$/Z$ on $K_X+B$ with scaling of an ample$/Z$ divisor 
terminates with a good log minimal model. 
\end{thm}
\begin{proof}
We can assume that $f$ is a contraction.
Run an LMMP$/Z$ on $K_X+B$ with scaling of some ample$/Z$  divisor. Since 
termination and semi-ampleness$/Z$ are both local on $Z$, we can assume that 
$Z$ is affine, say $\Spec R$. 
By Theorem \ref{t-mmodel-term-scaling}, it is enough to show that 
$(X/Z,B)$ has a good log minimal model.
Let $I$ be a positive integer so that $I(K_X+B)$ is Cartier. 
Since ${R}(X/Z,K_X+B)$ is a finitely generated $R$-algebra, 
perhaps after replacing $I$ with a multiple, there exist a log resolution 
$g\colon W\to X$, a divisor $E\ge 0$ and a free divisor $F$ on $W$ such 
that 
$$
\Fix g^*mI(K_X+B)=mE ~~~~ \mbox{and} ~~~~ \Mov g^*mI(K_X+B)=mF
$$ 
for every $m>0$ (cf. [\ref{pl-flips}] or [\ref{B-dam}, Theorem 4.3]).
Let $h\colon W\to T/Z$ be the contraction defined by $|F|$.
Then, since $(K_X+B)|_{X_\eta}\sim_\Q 0$, the map $T\to Z$ is birational 
and $E$ is vertical$/T$.

Choose a boundary $B_W$ so that $(W/Z,B_W)$ is a log smooth model of $(X/Z,B)$ of type (1)
as in Definition \ref{d-log-smooth-model}, that is, $B_W$ is the birational transform of 
$B$ plus the reduced exceptional divisor of $g$.
We can write 
$$
I(K_W+B_W)=g^*I(K_X+B)+E'
$$
 where $E'\ge 0$ is 
exceptional$/X$.
So, 
$$
\Fix mI(K_W+B_W)=mE+mE'~~~ \mbox{and}~~~\Mov mI(K_W+B_W)=mF
$$ 
Run the LMMP$/T$ on $K_W+B_W$ with scaling of some ample$/T$ divisor. Since 
$(K_X+B)|_{X_\eta}\sim_\Q 0$, there is a non-empty open subset $U\subseteq Z$ so that $K_X+B\sim_\Q 0$ 
over $U$ and $T\to Z$ is an isomorphism over $U$. 
So, over $U$, $(X/Z,B)$ is a weak lc model of $(W/Z,B_W)$ hence by Corollary \ref{c-wlc-to-mmodel}, 
 $(W/Z,B_W)$ has a log minimal model over $U$ (which is just a suitable $\Q$-factorial 
dlt blowup of $(X/Z,B)$). 
Thus, by Theorem \ref{t-mmodel-term-scaling}, the LMMP terminates over $U$. By construction, over $U$ 
we have
$$
I(K_W+B_W)\sim E+E'+F\sim E+E'
$$
so we reach a model $Y'$ on which $E_{Y'}+E_{Y'}'\sim_\Q 0$ over $U$, 
in particular, $E_{Y'}+E_{Y'}'$ is vertical$/T$. On the other hand, since $W\bir Y'$ is a 
partial LMMP$/T$ on $K_W+B_W$, 
$$
\Fix mI(K_{Y'}+B_{Y'})=\Fix (mE_{Y'}+mE_{Y'}'+mF_{Y'})=mE_{Y'}+mE_{Y'}'
$$
hence by Lemma \ref{l-v-exceptional}, $E_{Y'}+E_{Y'}'$ is very exceptional$/T$.  Now, by Theorem \ref{t-exceptional-LMMP}, 
there is an LMMP$/T$ on $K_{Y'}+B_{Y'}$ which ends up with a 
 log minimal model $(Y/T,B_Y)$ on which 
$$
I(K_Y+B_Y)\sim_\Q E_{Y}+E_{Y}'=0/T
$$
 In particular, 
 $I(K_Y+B_Y)\sim_\Q F_Y/Z$. Thus, $K_Y+B_Y$ is semi-ample$/Z$ 
 and $(Y/Z,B_Y)$ is a good log minimal model of $(W/Z,B_W)$ hence of $(X/Z,B)$ by Remark \ref{rem-log-smooth-model}.
\end{proof}

\begin{thm}\label{t-main-klt} 
We have the following for klt pairs:

$\rm (1)$ Let $(X/Z,B)$ be a $\Q$-factorial klt pair where $B$ is a $\Q$-divisor and $f\colon X\to Z$ is surjective. 
Assume that $(K_X+B)|_{X_\eta}\sim_\Q 0$ where $X_\eta$ is the generic fibre of $f$. 
Then, any LMMP$/Z$ on $K_X+B$ with scaling of an ample$/Z$ divisor 
terminates with a good log minimal model.
 
$\rm (2)$ Theorem \ref{conj-main-general} and Theorem \ref{t-acc-main-general-modified} hold when $(X/Z,B)$ is klt.
\end{thm}
\begin{proof}  
(1) By [\ref{BCHM}], 
$\mathcal{R}(X/Z,K_X+B)$ is a finitely generated  $\mathcal{O}_Z$-algebra so  
we can apply Theorem \ref{t-main-under-fg}.
 
(2) Suppose that $(X/Z,B)$ is klt under the assumptions of Theorem \ref{conj-main-general}.
By taking a $\Q$-factorial dlt blowup as in Corollary \ref{c-Q-factorial-blup} 
we can assume that $X$ is $\Q$-factorial.
If $A$ is not vertical$/Z$, then $K_X+B$ is not pseudo-effective$/Z$ hence 
$(X/Z,B)$ has a Mori fibre space by [\ref{BCHM}].
So, we can assume that $A$ is vertical$/Z$. Then, 
 $(K_X+B)|_{X_\eta}\sim_\Q 0$ where $X_\eta$ is the generic fibre of $f$. 
Now use (1). We can treat Theorem \ref{t-acc-main-general-modified} in a similar way. 
Note that here we do not need ACC for lc thresholds.
\end{proof}

Though the last theorem settles the klt case of Theorem \ref{conj-main-general} 
and Theorem \ref{t-acc-main-general-modified} but 
we prove further results in this direction as we will need them to deal 
with the lc case (eg, proof of \ref{l-LMMP-scaling}). 
The next two theorems unfortunately do not simply 
follow from Theorem \ref{t-main-klt} because the boundaries that appear 
are not necessarily rational.

\begin{thm}\label{t-main-klt-trivial}
Let $(X/Z,B)$ be a klt pair where $f\colon X\to Z$ is surjective. 
Assume further that there is a contraction $g\colon X\to S/Z$ such that 
 $K_X+B\sim_\R 0/S$ and that $S\to Z$ is generically finite. 
 Then, $(X/Z,B)$ has a good log minimal model. (Note that 
 $B$ is not necessarily a $\Q$-divisor.) 
\end{thm}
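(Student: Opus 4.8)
The plan is to deduce this from Theorem~\ref{t-main-under-fg} after standard reductions. First, taking a small $\Q$-factorialisation $X'\to X$ of the klt pair $(X,B)$ (which exists by [\ref{BCHM}]), letting $B'$ be the birational transform of $B$, and composing with $g$, we have $K_{X'}+B'=g'^*(K_X+B)\sim_\R 0/S$ and a good log minimal model of $(X'/Z,B')$ is one of $(X/Z,B)$; so we may assume $X$ is $\Q$-factorial. Next, let $S\to Z'\to Z$ be the Stein factorisation of the given morphism $S\to Z$. Then $Z'\to Z$ is finite, so replacing $Z$ by $Z'$ changes neither nef$/Z$, semi-ample$/Z$, nor log discrepancies; thus it suffices to find a good log minimal model over $Z'$. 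Now $S\to Z'$ is a proper birational contraction, being generically finite with connected fibres; consequently $f\colon X\to Z'$ is a contraction whose generic fibre equals the generic fibre of $g\colon X\to S$. From now on I rename $Z'$ as $Z$.

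Second, reduce to the case where $B$ is a $\Q$-divisor. Let $X_\eta$ be the generic fibre of $f$, equivalently of $g$. The set of $\R$-divisors $D$ on $X_\eta$ with $K_{X_\eta}+D\sim_\R 0$ is a $\Q$-rational affine subspace of the space of $\R$-divisors, namely $-K_{X_\eta}$ plus the $\Q$-subspace spanned by principal divisors, and it contains $B|_{X_\eta}$. Hence, perturbing only the coefficients of the components of $B$ that are horizontal over $g$ (this does not alter the restriction to $X_\eta$, since the vertical components restrict to $0$ there), one can write $B=\sum_i r_i B_i$ with $r_i>0$, $\sum_i r_i=1$, each $B_i$ a $\Q$-divisor, each $(X,B_i)$ klt, and $(K_X+B_i)|_{X_\eta}\sim_\Q 0$. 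Since each $(X,B_i)$ is klt, if every $(X/Z,B_i)$ has a good log minimal model then so does $(X/Z,B)$, by the usual convexity argument for good log minimal models of klt pairs (cf. [\ref{B}]). So we may assume $B$ is a $\Q$-divisor; this also gives $K_X+B\sim_\Q 0/S$, as a $\Q$-Cartier $\Q$-divisor with $K_X+B\sim_\R 0/S$ is $\sim_\Q 0/S$.

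It now remains to check the hypotheses of Theorem~\ref{t-main-under-fg} for $(X/Z,B)$. The pair is $\Q$-factorial and klt, hence $\Q$-factorial dlt, and $f$ is a surjective contraction. Since $(X,B)$ is klt and $B$ is a $\Q$-divisor, $\mathcal{R}(X/Z,K_X+B)$ is a finitely generated $\mathcal{O}_Z$-algebra by [\ref{BCHM}]. Finally, $K_X+B\sim_\Q 0/S$ and the generic fibre $X_\eta$ of $f$ is the generic fibre of $g$, so $(K_X+B)|_{X_\eta}\sim_\Q 0$. Theorem~\ref{t-main-under-fg} then applies and shows that any LMMP$/Z$ on $K_X+B$ with scaling of an ample divisor terminates with a good log minimal model; in particular $(X/Z,B)$ has one, and undoing the reductions finishes the proof.

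Thus the theorem is essentially a corollary of Theorem~\ref{t-main-under-fg}: the two inputs that theorem needs — finite generation of klt canonical algebras ([\ref{BCHM}]), and the fact that $S\to Z$ being generically finite forces $K_X+B$ to be $\Q$-linearly trivial on the generic fibre of $f$ — are both immediate here. I do not expect a deep obstacle; the only slightly delicate point is the reduction from $\R$- to $\Q$-coefficients, since $K_X+B\sim_\R 0/S$ need not deform to rational boundaries on $X$, so the perturbation must be performed in the directions that are free on $X_\eta$, together with the standard (but not one-line) convexity statement for existence of good log minimal models.
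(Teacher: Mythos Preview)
Your approach differs from the paper's. The paper applies the canonical bundle formula of Ambro and Fujino--Gongyo to write $K_X+B\sim_\R g^*(K_S+B_S)$ for a klt pair $(S/Z,B_S)$, runs a BCHM MMP on $(S/Z,B_S)$ (which terminates since $S\to Z$ is birational) to reach a minimal model $(T/Z,B_T)$, and then uses the very-exceptional machinery (Theorem~\ref{t-exceptional-LMMP}) over $T$ to produce a model $Y$ with $K_Y+B_Y\sim_\R 0/T$, hence semi-ample$/Z$. This handles the $\R$-boundary directly, with no rationalisation step. Your route --- Stein-factorise so that $f$ becomes a contraction, then invoke Theorem~\ref{t-main-under-fg} via BCHM finite generation --- is valid and shorter when $B$ is already a $\Q$-divisor, and in that case it is indeed essentially a corollary of Theorem~\ref{t-main-under-fg}.

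The genuine gap is the $\R\to\Q$ reduction. The ``usual convexity argument'' you cite is not standard, and [\ref{B}] does not contain it: semi-ampleness is not a convex condition, so good log minimal models for the $\Q$-boundaries $B_i$ do not formally combine to give one for $B=\sum r_iB_i$. If you try to transport the good model $Y_1$ of $(X/Z,B_1)$ to the boundary $B$, you find that $K_{Y_1}+B_{Y_1}$ need not be $\sim_\R 0$ over the lc model of $(X/Z,B_1)$, and the argument stalls. The paper's pointed parenthetical ``(Note that $B$ is not necessarily a $\Q$-divisor)'' is signalling that this is precisely the content of the theorem --- and indeed it is applied in Theorem~\ref{t-main-klt-termination} with $B$ replaced by $B+\lambda H$ for a possibly irrational $\lambda$. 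The canonical bundle formula is exactly the device that makes the $\R$-case go through, because it produces a single base over which $K_X+B$ is $\R$-trivial regardless of the rationality of the coefficients. (A minor point: your parenthetical on perturbation is inverted --- changing horizontal components \emph{does} alter the restriction to $X_\eta$; what you mean is that vertical coefficients are unconstrained and may be moved freely to $\Q$, while the residual $\Q$-affine condition on horizontal ones has dense rational points.)
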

\begin{proof}
As mentioned above we cannot apply Theorem \ref{t-main-klt} because $B$ 
is not assumed to be rational. However, the proof given below is somewhat similar to the 
proof of Theorem \ref{t-main-under-fg}.

We can assume that $f$ is a contraction hence $S\to Z$ is birational. 
Moreover, by taking a $\Q$-factorial dlt blowup as in Corollary \ref{c-Q-factorial-blup} 
we can assume that $X$ is $\Q$-factorial.
Since $K_{X}+B\sim_\R 0/S$, by Ambro [\ref{Ambro-adjunction}] and Fujino-Gongyo 
[\ref{Fujino-Gongyo-adjunction}, Theorem 3.1], there is a 
boundary $B_S$ on $S$ such that 
$$
K_{X}+B\sim_\R g^*(K_{S}+B_{S})
$$
and such that $(S/Z,B_{S})$ is klt and of general type. So, by [\ref{BCHM}], 
there is a weak lc model $(T/Z,B_{T})$ for $(S/Z,B_{S})$ obtained by running 
an LMMP$/Z$ on $K_S+B_S$ with scaling of some ample$/Z$ divisor. Moreover, 
since $T\bir S$ does not contract divisors, there are non-empty open subsets 
$U\subseteq S$ and $V\subseteq T$ such that the induced map 
$U\bir V$ is an isomorphism and such that $\codim (T\setminus V)\ge 2$.

Take a log smooth model $(W/Z,B_W)$ of $(X/Z,B)$ of type (2) as in Definition \ref{d-log-smooth-model}
using a log resolution  $e\colon W\to X$.  
Then, $(W/Z,B_W)$ is klt and 
$$
K_W+B_W=e^*(K_{X}+B)+E
$$
 where $E\ge 0$ is exceptional$/X$, and 
 any log minimal model of $(W/Z,B_W)$ is also a log minimal model of 
$(X/Z,B)$ by Remark \ref{rem-log-smooth-model}. Moreover, we can assume that the induced maps 
$\phi\colon W\bir S$ and $\psi\colon W\bir T$ are both morphisms. 
Run an LMMP$/T$ on $K_W+B_W$ with scaling of some ample$/T$ divisor. 
Since $K_X+B\sim_\R 0$ over $U$, $(X/Z,B)$ is a log minimal model of $(W/Z,B)$ over $U=V$ 
hence by Theorem \ref{t-mmodel-term-scaling} the LMMP terminates over 
$V$: we reach a model $Y'$ such that $K_{Y'}+B_{Y'}\sim_\R 0$ over $V$, and the remaining steps of the LMMP 
contract extremal rays only over $T\setminus V$.  

Let $\pi$ be the induced map $W\bir Y'$. 
Let $M\subseteq X$, $N\subseteq W$, and $O\subseteq Y'$ be the inverse images of $U=V$. 
Then, as mentioned $(M/V,B|_M)$ is a log minimal model of $(N/V,B_W|_N)$ hence $M$ and $O$ 
are isomorphic in codimension one. Thus, $E|_N$ is contracted over $O$. 
 So, in particular, $E_{Y'}=\pi_*E$ is a divisor on 
$Y'$ which is mapped into $T\setminus V$ hence it is very exceptional$/T$. 
On the other hand, since $(T/Z,B_{T})$ is a weak lc model of $(S/Z,B_{S})$,
$$
\phi^*(K_{S}+B_{S})=\psi^*(K_{T}+B_{T})+G
$$
 where $G\ge 0$ is very 
exceptional$/T$ because $G$ is mapped into $T\setminus V$. Thus,

\begin{equation*}
\begin{split}
 K_{Y'}+B_{Y'}  =\pi_*(K_{W}+B_{W})&=\pi_*(e^*(K_{X}+B)+E)\\
& =\pi_*e^*(K_{X}+B)+\pi_*E\\
& \sim_\R \pi_*\phi^*(K_{S}+B_{S})+\pi_*E\\
& =\pi_*\psi^*(K_{T}+B_{T})+\pi_*G+\pi_*E
\end{split}
\end{equation*}

By construction, $\pi_*G+\pi_*E\ge 0$ is very exceptional$/T$. So, by 
Theorem \ref{t-exceptional-LMMP}, there is an LMMP$/T$ on $K_{Y'}+B_{Y'}$ 
with scaling of some ample$/T$ divisor which ends up with a model $Y$ 
on which $K_Y+B_Y\sim_\R 0/T$ hence 
$$
K_Y+B_Y\sim_\R \nu^* (K_{T}+B_{T})
$$ 
where $\nu\colon Y\to T$ is the induced morphism. 
 In particular, $K_Y+B_Y$ is semi-ample$/Z$ as $K_T+B_T$ is semi-ample$/Z$.
 So, $(Y/Z,B_Y)$ is a good log minimal model of $(W/Z,B_W)$ hence of $(X/Z,B)$.
\end{proof}

\begin{thm}\label{t-main-klt-termination}
Let $(X/Z,B)$ be a klt pair where $B$ is a $\Q$-divisor and $f\colon X\to Z$ is surjective. 
Assume further that  $(K_X+B)|_F\sim_\Q 0$ for the generic fibre $F$ of $f$. 
If $K_X+B+H$ is klt and nef$/Z$ for some $\Q$-divisor $H\ge 0$, then the LMMP$/Z$ on $K_X+B$ with 
scaling of $H$ terminates if either

$(1)$ $B$ is big$/Z$ or $H$ is big$/Z$, or 

$(2)$ $H$ is vertical$/Z$ and $\lambda\neq \lambda_j$ for any $j$ where $\lambda_j$ are the 
numbers appearing in the LMMP with scaling (as in Definition \ref{d-LMMP-scaling}) and 
$\lambda=\lim_{i\to \infty} \lambda_i$.
\end{thm}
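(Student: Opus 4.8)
The plan is to treat the two cases separately: Case $(1)$ reduces to known termination results for the minimal model program with scaling of pairs with big boundary, while Case $(2)$ requires a chain of reductions that eventually either lowers the dimension of the base or puts us within reach of Theorems \ref{t-main-klt-trivial} and \ref{t-exceptional-LMMP}. First I would make the routine reductions: since termination is local on $Z$ we may assume $Z$ is affine and $f$ is a contraction; since the Picard number drops at each divisorial contraction we may assume, after finitely many steps, that the given LMMP is an infinite sequence of flips $X_i\bir X_{i+1}$; and if some scaling number $\lambda_i$ vanishes we are already done, so assume $\lambda_i>0$ for all $i$, with $\lambda=\lim\lambda_i$.

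For Case $(1)$, if $B$ is big$/Z$ then termination of the LMMP$/Z$ on $K_X+B$ with scaling of $H$ is immediate from [\ref{BCHM}], since the minimal model program with scaling for a klt pair with big boundary terminates with a log minimal model or a Mori fibre space. If instead $H$ is big$/Z$ and $\lambda>0$, I would reduce to the previous subcase: choose a rational number $0<\delta<\lambda$; then $B+\delta H$ is big$/Z$, $(X,B+\delta H)$ is klt, and $K_X+B+\delta H+(1-\delta)H=K_X+B+H$ is nef$/Z$, so the LMMP$/Z$ on $K_X+B$ with scaling of $H$ coincides step by step with the LMMP$/Z$ on $K_X+(B+\delta H)$ with scaling of $(1-\delta)H$ (because $\lambda_i>\delta$ for every $i$), and the latter terminates by the big case. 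When $H$ is big$/Z$ and $\lambda=0$ one argues as in the proof of Theorem \ref{t-exceptional-LMMP}, using that $K_X+B$ is then a limit of movable$/Z$ $\R$-divisors together with the bigness of $H$ and [\ref{BCHM}].

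For Case $(2)$, the hypothesis $\lambda\neq\lambda_j$ for every $j$ gives $\lambda<\lambda_j$ for all $j$, hence $(K_{X_i}+B_i+\lambda H_i)\cdot R_i<0$ at every step $i$; therefore the given LMMP is, step by step, the same as the LMMP$/Z$ on $K_X+(B+\lambda H)$ with scaling of $H$, whose scaling numbers are $\lambda_i-\lambda\to 0$. Here $(X,B+\lambda H)$ is klt since $\lambda<\lambda_1\le 1$, and $(K_X+B+\lambda H)|_F=(K_X+B)|_F\sim_\Q 0$ since $H$ is vertical$/Z$ so that $H|_F=0$; if $\lambda$ is irrational we pass to $\R$-coefficients, which is harmless because the results invoked below hold in that generality. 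Replacing $B$ by $B+\lambda H$ we may thus assume $\lambda=0$. Then $K_X+B$ is a limit of movable$/Z$ divisors, in particular pseudo-effective$/Z$; and since $H$ is vertical$/Z$ and each $K_{X_i}+B_i+\lambda_i H_i$ is nef$/Z$, each $K_{X_i}+B_i$ is nef over $Z\setminus P$, where $P:=\overline{f(\Supp H)}\subsetneq Z$. Consequently no flipping ray $R_i$ lies over $Z\setminus P$, i.e. the entire LMMP takes place over the proper closed subset $P$.

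It remains to deduce termination from the fact that the LMMP occurs over $P$, and this is where I expect the real work to be. The idea is to descend to a base of strictly smaller dimension. Since $(K_X+B)|_F\sim_\Q 0$, over a dense open $U\subseteq Z$ the canonical bundle formula of Ambro [\ref{Ambro-adjunction}] and Fujino--Gongyo [\ref{Fujino-Gongyo-adjunction}] gives $K_X+B\sim_\R f^*(K_U+B_U)$ with $(U,B_U)$ klt and $K_U+B_U$ pseudo-effective, and $\dim U=\dim Z<\dim X$; combined with finite generation of log canonical rings of klt pairs in lower dimension, this also yields that $\mathcal R(X/Z,K_X+B)$ is a finitely generated $\mathcal O_Z$-algebra, so that a good log minimal model of $(X/Z,B)$ exists as in Theorem \ref{t-main-under-fg}. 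One then restricts the LMMP over a general subvariety of $Z$ meeting $P$ in the appropriate codimension --- legitimate precisely because every flip is over $P$ --- to match it with an LMMP in strictly smaller dimension still satisfying the hypotheses of Case $(2)$, and concludes by induction on dimension; in the base cases, once $K_X+B$ is nef away from $P$ a good log minimal model over $Z$ is produced directly by Theorem \ref{t-main-klt-trivial}, applied to the contraction attached to $K_X+B$ (which maps onto a variety generically finite over $Z$), together with Theorem \ref{t-exceptional-LMMP} to contract the divisors lying over $P$, contradicting non-termination. The delicate points are the faithfulness of the restriction to a general subvariety --- the non-generic fibres over $P$ must be controlled --- and checking that the descended pair genuinely inherits $(K+B)|_{F}\sim_\Q 0$, the verticality of $H$, and the condition $\lambda\neq\lambda_j$.
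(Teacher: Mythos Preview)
Your treatment of Case~$(1)$ is essentially in line with the paper, though for the subcase $H$ big$/Z$ and $\lambda=0$ the relevant input is not Theorem~\ref{t-exceptional-LMMP} but rather Theorem~\ref{t-main-under-fg} (which gives a good log minimal model of $(X/Z,B)$) together with [\ref{Birkar-II-supplement}]; the very-exceptional negativity argument plays no role here.

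In Case~$(2)$ there is a genuine gap. Your reduction to $\lambda=0$ and the observation that every step of the LMMP lies over $P=\overline{f(\Supp H)}$ are fine, but the proposed ``restriction of the LMMP over a general subvariety of $Z$'' is not a legitimate operation: a $K_{X_i}+B_i$-flip over $Z$ does not in general restrict to a flip (or even a birational contraction of the right type) on the preimage of a hyperplane section of $Z$, the scaling numbers need not be preserved, and there is no mechanism guaranteeing that the restricted sequence is again an LMMP with scaling satisfying $\lambda\neq\lambda_j$. The appeal to Theorem~\ref{t-main-klt-trivial} ``applied to the contraction attached to $K_X+B$'' is also circular at this stage, since you have not yet produced such a contraction; $K_X+B$ being nef only over $Z\setminus P$ does not give one.

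The paper's route is different and avoids any restriction or induction on $\dim Z$. First, by Theorem~\ref{t-main-under-fg} the nef divisor $K_X+B+\lambda_1 H$ is semi-ample$/Z$, yielding a contraction $g\colon X\to T/Z$; since $H$ is vertical$/Z$, $T\to Z$ is birational. Over $T$ one runs an LMMP on $K_X+B$ with scaling of an \emph{ample}$/T$ divisor, which terminates by Case~$(1)$, producing a weak lc model $(Y/T,B_Y)$ with $K_Y+B_Y$ semi-ample$/T$; let $h\colon Y\to S/T$ be the associated contraction. Because $K_Y+B_Y+\lambda_1 H_Y\sim_\Q 0/T$ and $K_Y+B_Y\sim_\Q 0/S$, one gets $H_Y\sim_\Q 0/S$, hence $K_Y+B_Y+\lambda H_Y\sim_\R 0/S$. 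Now Theorem~\ref{t-main-klt-trivial} (which is stated for $\R$-divisors precisely to cover irrational $\lambda$) applies to $(Y/Z,B_Y+\lambda H_Y)$ and produces a log minimal model of $(X/Z,B+\lambda H)$. Termination of the original LMMP, viewed as an LMMP on $K_X+B+\lambda H$ with scaling numbers tending to $0$, then follows from [\ref{Birkar-II-supplement}]. The key idea you are missing is this two-step passage $X\to T$ and then $Y\to S$, which converts the problem into one where Theorem~\ref{t-main-klt-trivial} applies directly.
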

\begin{proof}
We can assume that $f$ is a contraction.
By Theorem \ref{t-main-klt}, $(X/Z,B)$ has a good log minimal model.
Assume that condition (1) holds. If $B$ is big$/Z$, the termination follows from 
[\ref{BCHM}]. If $H$ is big$/Z$, and if $\lambda>0$, then again the termination follows 
from [\ref{BCHM}] as we can replace $B$ with $B+\lambda H$. If $\lambda=0$,  use 
Theorem \ref{t-mmodel-term-scaling}.

Now assume that condition (2) holds. Since $B,H$ are $\Q$-divisors,
each $\lambda_i$ is rational. However, at this stage we do not know whether 
$\lambda$ is rational. So, we 
cannot apply Theorem \ref{t-main-klt} to get a log minimal model of $(X/Z,B+\lambda H)$ 
to allow us to apply Theorem \ref{t-mmodel-term-scaling}.  
  
We may assume that the LMMP consists of only a sequence $X_i\bir X_{i+1}/Z_i$ of 
log flips and that $X_1=X$. Moreover, by Remark \ref{rem-lifting-flips} (1)(2), we can lift 
the sequence to the $\Q$-factorial situation hence assume that $X_i$ are $\Q$-factorial. 
Now by Theorem \ref{t-main-klt}, $({X}/Z,B+\lambda_{1}H)$ has a good log minimal model. 
But then every log minimal model of $({X}/Z,B+\lambda_{1}H)$ is good by Remark \ref{rem-two-wlc-models}. 
Since 
$K_{X}+B+\lambda_{1}H$ is nef$/Z$, $({X}/Z,B+\lambda_{1}H)$ is a log minimal model of 
itself hence $K_{X}+B+\lambda_{1}H$ is semi-ample$/Z$. Let 
$g\colon X\to T$ be the contraction associated to it. Since 
$H$ is vertical$/Z$ and $(K_X+B)|_F\sim_\Q 0$, the map $T\to Z$ is birational so the generic fibre of $g$ 
and $f$ are the same. By Theorem \ref{t-main-klt}, some LMMP$/T$ 
on $K_X+B$ terminates with a good log minimal $(Y/T,B_Y)$ 
of $(X/T,B)$. 
Let $h \colon Y\to S/T$ be the 
contraction associated to $K_Y+B_Y$. Since 
$$
K_{X}+B+\lambda_{1}H\sim_\Q 0/T
$$
 we have 
$$
K_{Y}+B_Y+\lambda_{1}H_Y\sim_\Q 0/S
$$ 
This combined with  $K_{Y}+B_Y\sim_\Q 0/S$
implies that $H_Y\sim_\Q 0/S$. In particular, since $K_{Y}+B_Y$ is semi-ample$/T$, 
the map $S\to T$ is birational otherwise $H_Y$ would be numerically negative on 
some curves not contained in $H_Y$ which is not possible as $H_Y\ge 0$.
Moreover, 
$$
K_{Y}+B_Y+\lambda H_Y\sim_\R 0/S
$$ 
Therefore, by Theorem \ref{t-main-klt-trivial}, $(Y/Z,B_Y+\lambda H_Y)$ 
has a log minimal model which is also a log minimal model of 
$(X/Z,B+\lambda H)$. Now the termination follows from Theorem \ref{t-mmodel-term-scaling}.
\end{proof}

An application of the above theorems concerns a certain LMMP with scaling which 
will be used in the proof of Lemmas \ref{l-s-termination} and \ref{l-acc-s-termination} and 
Proposition \ref{p-acc-main-vertical}.

\begin{lem}[cf. {[\ref{B-II}, Lemma 4.1]}]\label{l-LMMP-scaling}
Let $(X/Z,B)$ be a $\Q$-factorial lc pair such that $f\colon X\to Z$ is 
surjective, $B$ is a $\Q$-divisor, and $(K_X+B)|_F\sim_\Q 0$ where $F$ is the generic fibre of $f$.
Moreover, assume that 

$\bullet$ every lc centre of $(X/Z,B)$ is contained in $\Supp \rddown{B}$,

$\bullet$ $K_X+B+cC$ is lc and nef$/Z$ for some $\Q$-divisor $C\ge 0$ and some 
rational number $c>0$,

$\bullet$ $K_X+B\sim_\Q P+C/Z$ for some $\Q$-divisor $P\ge 0$ with $\Supp P=\Supp \rddown{B}$.\\ 
Then, we can run an LMMP$/Z$ on $K_X+B$ with scaling of $cC$ so that  $\lambda:=\lim_{i\to \infty}\lambda_i=0$ 
where $\lambda_i$  are the numbers that appear in the LMMP. (Note that we are not claiming that 
the LMMP terminates). 
\end{lem}
\begin{proof}
\emph{Step 1.} 
Since 
$$
(P+C)|_F \sim_\Q(K_X+B)|_F\sim_\Q  0
$$ 
both $P$ and $C$ are 
vertical$/Z$, in particular, $\rddown{B}$ is also vertical$/Z$. 
On the other hand, 
for each rational number $t\in (0,1]$ we can write 
\begin{equation*}
\begin{split}
K_X+B+tcC\sim_\Q & (1+\epsilon)(K_X+B)-\epsilon P-\epsilon C+tcC \\
\sim_\Q & 
(1+\epsilon)(K_X+B-\frac{\epsilon}{1+\epsilon} P+\frac{tc-\epsilon }{1+\epsilon}C)/Z
\end{split}
\end{equation*}
for some sufficiently small rational number $\epsilon>0$. Under our assumptions, 
$$
(X,B-\frac{\epsilon}{1+\epsilon} P+\frac{tc-\epsilon}{1+\epsilon}C)
$$ 
is klt and 
$$
(K_X+B-\frac{\epsilon}{1+\epsilon} P+\frac{tc-\epsilon}{1+\epsilon}C)|_F=(K_X+B)|_F\sim_\Q 0
$$ 
 In particular, if $K_X+B+tcC$ is nef$/Z$, then 
 $$
 K_X+B-\frac{\epsilon}{1+\epsilon} P+\frac{tc-\epsilon}{1+\epsilon}C
 $$ 
 is nef$/Z$ hence semi-ample$/Z$ by Theorem \ref{t-main-klt}; this in turn implies that  
$K_X+B+tcC$ is semi-ample$/Z$. We will use this observation 
on $X$ and on the birational models that will be constructed.

\emph{Step 2.} Put $Y_1:=X$, $B_1:=B$, and $C_1:=C$. Let $\lambda_1\ge 0$ be the 
smallest number such that $K_{Y_1}+B_1+\lambda_1 cC_1$ is nef$/Z$. We may assume that $\lambda_1>0$. 
We will show that $\lambda_1$ is rational. 
Pick a rational number $\lambda'\in (0,\lambda_1)$. In view of Step 1, there is a boundary $\Delta$ 
and a rational number $\epsilon>0$ such that 
$(Y_1/Z,\Delta)$ is klt and 
$$
 K_{Y_1}+B_1+\lambda' cC_1 \sim_\Q (1+\epsilon) (K_{Y_1}+\Delta) /Z
$$
Now $s=\frac{\lambda_1-\lambda'}{1+\epsilon}$ is the smallest number such that $K_{Y_1}+\Delta+scC_1$ 
is nef$/Z$. By [\ref{B}, Lemma 3.1], $s$ is rational which means that 
$\lambda_1$ is also rational (note that we did not apply [\ref{B}, Lemma 3.1] directly to 
$(Y_1/Z,B_1)$ because this pair may not be dlt).

By Step 1, $K_{Y_1}+B_1+\lambda_1cC_1$ is semi-ample$/Z$; let $Y_1\to V_1/Z$ be the 
associated contraction. Run the LMMP$/V_1$ on $K_{Y_1}+B_1+tcC_1$ with scaling of 
an ample$/V_1$ divisor, for some rational number $t\in (0,\lambda_1)$. This terminates 
with a good log minimal model of $(Y_1/V_1,B_1+tcC_1)$ by Step 1 and by Theorem 
\ref{t-main-klt}. The LMMP is also an LMMP$/V_1$ on $K_{Y_1}+B_1$ with scaling of $\lambda_1 cC_1$ 
because 
$$
K_{Y_1}+B_1+tcC_1\sim_\Q -(\lambda_1-t)cC_1\sim_\Q \frac{(\lambda_1-t)}{\lambda_1}(K_{Y_1}+B_1)/V_1
$$ 
So, we get a model $Y_2$ such that $K_{Y_2}+B_2$ is semi-ample$/V_1$ 
 where $B_2$ is the birational transform of $B_1$ 
(similar notation will be used for other divisors and models). Now 
since $K_{Y_2}+B_2+\lambda_1cC_2$ is the pullback of some ample$/Z$ divisor on $V_1$, 
$$
K_{Y_2}+B_2+\lambda_1cC_2+\delta (K_{Y_2}+B_2)
$$ 
is semi-ample$/Z$ for some sufficiently small $\delta>0$. 
Put it in another way, $K_{Y_2}+B_2+\tau cC_2$ is semi-ample$/Z$ for some rational number 
$\tau<\lambda_1$. We can consider $Y_1\bir Y_2$ as a partial 
LMMP$/Z$ on $K_{Y_1}+B_1$ with scaling of $cC_1$.
We can continue as before. That is, let $\lambda_2\ge 0$ be the smallest number such that 
$K_{Y_2}+B_2+\lambda_2 cC_2$ is nef$/Z$, and so on (note that $\lambda_1>\tau\ge \lambda_2$).
 This process is an LMMP$/Z$ on $K_X+B$ with scaling of $cC$. 
 The numbers $\lambda_i$ that appear in the LMMP satisfy $\lambda:=\lim_{i\to \infty}\lambda_i\neq \lambda_j$ 
for any $j$. 

\emph{Step 3.} In Step 2, we constructed an LMMP$/Z$ 
on $K_X+B$ with scaling of $cC$.
After modifying the notation, 
we may assume that the LMMP consists of only a 
sequence $X_i\bir X_{i+1}/Z_i$ of log flips, $X_1=X$, and that 
$K_{X_i}+B_i+\lambda_i cC_i$ is nef$/Z$ but numerically trivial$/Z_i$.

If $\lambda>0$, then the LMMP is also an LMMP$/Z$ on 
$K_X+B+\lambda' cC$ with scaling of $(1-\lambda')cC$ for some 
rational number $\lambda'\in (0,\lambda)$.
As in Step 1, there is a rational number $\epsilon>0$ and a klt $(X/Z,\Delta)$ 
such that 
$$
K_X+B+\lambda' cC\sim_\Q (1+\epsilon)(K_X+\Delta)/Z
$$
So, we can consider the LMMP as an LMMP$/Z$ on $K_X+\Delta$ with scaling of 
$\frac{1-\lambda'}{1+\epsilon}cC$. Since $(X/Z,\Delta+\frac{1-\lambda'}{1+\epsilon}cC)$ 
is klt and $C$ is vertical$/Z$, 
Theorem \ref{t-main-klt-termination} (2) implies that the LMMP terminates. 
So,  $\lambda=0$.
\end{proof}

\vspace{0.3cm}
\section{\textbf{Proof of Theorem \ref{conj-main-general} and Corollary \ref{t-lc-flips}}}

Corollary \ref{t-lc-flips} follows from Theorem \ref{conj-main-general} easily (see the 
end of this section).
We divide the proof of Theorem \ref{conj-main-general} into two cases. One case is when 
every lc centre of $(X/Z,B)$ is vertical$/Z$ (the vertical case) and the other case is 
when some lc centre is horizontal$/Z$ (the horizontal case). Each case needs a different 
kind of argument. Here we give a brief account of the main ideas (a very similar line of thought is used to prove Theorem \ref{t-acc-main-general-modified}). 

Suppose that $(X/Z,B+A)$ is as in the statement of Theorem \ref{conj-main-general}. 
By taking a $\Q$-factorial dlt blowup we can assume that $(X/Z,B)$ is $\Q$-factorial dlt.
First note that if $A$ intersects the generic fibre of $X\to Z$, then $K_X+B$ is not 
pseudo-effective$/Z$ and the result follows from [\ref{BCHM}].  
So, we could assume that $A$ is vertical$/Z$. 

Now assume that every lc centre of $(X/Z,B)$ is vertical$/Z$ which   
means that $\rddown{B}$ is vertical$/Z$. By Theorem \ref{t-mmodel-term-scaling},
 the termination statement in Theorem \ref{conj-main-general} (3) holds for $(X/Z,B)$ 
 if statement (1) holds. Thus, 
we only need to construct a good log minimal model of $(X/Z,B)$.
Pick a sequence 
$t_1>t_2>\dots$ of sufficiently small positive rational numbers such that $\lim_{i\to \infty} t_i=0$. 
Then, $(X/Z,B-t_i \rddown{B})$ 
is a klt pair for each $i$. So, by Theorem \ref{t-main-klt}, for each $i$,
we get a good log minimal model $(Y_i/Z,B_{Y_i}-t_i \rddown{{B}_{Y_i}})$ of 
$(X/Z,B-t_i \rddown{B})$ by running some LMMP$/Z$ on $K_X+B-t_i \rddown{B}$. 

Assume that $Y_i=Y_{i+1}$ for every $i\gg 0$ and let $Y$ be this common model. Then,   
a simple calculation on log discrepancies show that $(Y/Z,B_{Y})$ is 
actually a weak lc model of $(X/Z,B)$ from which we get a log minimal model 
as in Corollary \ref{c-wlc-to-mmodel}.
In general the $Y_i$ may be different 
but at least we could assume that they are all isomorphic in codimension one. 
In particular, if $\rddown{B_{Y_1}}=0$, then we could replace each $Y_i$ 
with $Y_1$ and proceed as before. We may then assume that $\rddown{B_{Y_1}}\neq 0$.

Since $K_{Y_1}+B_{Y_1}-t_1\rddown{B_{Y_1}}$ is semi-ampleness$/Z$,  
$$
K_{Y_1}+B_{Y_1}-t_1\rddown{B_{Y_1}}\sim_\Q C_{Y_1}/Z
$$
for some $C_{Y_1}\ge 0$ so that $K_{Y_1}+B_{Y_1}+C_{Y_1}$ is lc. 
If $t_i'=\frac{t_i}{t_1-t_i}$ and $i\neq 1$, then 
$$
K_{Y_1}+B_{Y_1}+t_i'C_{Y_1}\sim_\Q (1+t_i')(K_{Y_1}+B_{Y_1}-t_i\rddown{B_{Y_1}})/Z
$$
We can make sure that $t_i'\le 1$ for every $i>1$.
Next we run an LMMP$/Z$ on $K_{Y_2}+B_{Y_2}$ with scaling of $t_2'C_{Y_2}$. 
We use special termination (see Lemma \ref{l-s-termination}) to show that the LMMP terminates with a model $Y$ 
. We can proceed as before by replacing each $Y_i$ with $Y$, for $i\gg 0$ (see Proposition \ref{p-main-vertical} 
for more details). 
Finally, we will use Theorem 
\ref{conj-s-ampleness} to show that the log minimal model we have constructed 
is actually good (see Proposition \ref{p-main-vertical-sa}).

Now the horizontal case: assume that some lc centre of $(X/Z,B)$ is horizontal$/Z$, that is, some component of $\rddown{B}$ is horizontal$/Z$. The above arguments 
do not work since $K_X+B-t_i \rddown{B}$ is not pseudo-effective$/Z$ so we do not 
have a log minimal model of  $(X/Z,B-t_i \rddown{B})$. However, $(X/Z,B-t_i \rddown{B})$ 
has a Mori fibre space. Assume that for some $i$ we already have a Mori fibre structure on 
$X$, that is, we have a $K_X+B-t_i \rddown{B}$-negative extremal contraction 
$g\colon X\to T/Z$ which is of fibre type, that is, $\dim X>\dim T$. The condition 
$K_X+B+A\sim_\Q 0/Z$ and the pseudo-effectivity of $K_X+B$ ensures that 
$K_X+B\sim_\Q 0/T$, in particular, $K_X+B\sim_\Q g^*M/Z$ for some 
$\Q$-Cartier divisor $M$ on $T$. Now there is a component of $\rddown{B}$, say $S$, which maps 
onto $T$ since $\rddown{B}$ is ample$/T$. By applying induction to 
$$
K_S+B_S+A_S:=(K_X+B+A)|_S
$$ 
we deduce that $(S/Z,B_S)$ has a good log minimal model hence 
the algebra $\mathcal{R}(S/Z, K_S+B_S)$ 
is finitely generated over $\x{O}_Z$. 
It turns out that  this implies that  $\mathcal{R}(T/Z, M)$  is finitely generated (see Lemma \ref{t-fg-pullbacks})
hence $\mathcal{R}(X/Z, K_X+B)$ is also finitely generated. 
Next, we can apply Theorem \ref{t-main-under-fg} (see Lemma \ref{l-horizontal-trivial-1}). 
In general, there may not be any Mori fibre structure on $X$ but there is such a structure 
on some birational model of $X$ which can be used in a somewhat similar way 
(see Proposition \ref{t-main-horizontal} and Lemma \ref{l-horizontal-trivial-2}).

\subsection*{The vertical case}
In this subsection, we deal with Theorem \ref{conj-main-general} in the vertical case, i.e. 
when every lc centre of $(X/Z,B)$ is vertical$/Z$, 
in particular, when $X\to Z$ is birational. First we prove a kind of special termination  
which will enable us to do induction on dimension.
It is helpful to recall Remarks  \ref{rem-lifting-flips} and \ref{rem-LMMP-scaling-induction} before reading the 
proof of the next result.

\begin{lem}\label{l-s-termination}
Let $(X/Z,B+A)$ be a lc pair of dimension $d$ as in Theorem \ref{conj-main-general} 
such that $(X/Z,B)$ satisfies the assumptions of Lemma \ref{l-LMMP-scaling}.
Then, assuming Theorem \ref{conj-main-general} in dimension $d-1$, there is an LMMP$/Z$ 
on $K_X+B$ with scaling of $cC$ which terminates.
\end{lem}
\begin{proof}
By Lemma \ref{l-LMMP-scaling}, there is an LMMP$/Z$ 
on $K_X+B$ with scaling of $cC$ such that $\lambda=\lim_{i\to \infty} \lambda_i=0$. We can assume that 
the LMMP consists of only log flips. Let $(X'/Z,B')$ be a $\Q$-factorial dlt blowup 
of $(X/Z,B)$ and let $C'$ be the birational transform of $C$. 
By Remark \ref{rem-lifting-flips} (1)(2), 
we can lift the above LMMP$/Z$ on $K_X+B$ to an LMMP$/Z$ on $K_{X'}+B'$ with scaling of $cC'$. 
We could assume that the latter LMMP consists of only log flips. 

Let $S$ be a component of 
$\rddown{B'}$ and let $T$ be the normalisation of the image 
of $S$ in $Z$. Put 
 $K_{S}+B_{S}:=(K_{X'}+B')|_{S}$ and let $(S',B_{S'})$ be a $\Q$-factorial dlt blowup 
of $(S,B_{S})$. 
By construction, 
$$
K_{S'}+B_{S'}+A_{S'}\sim_\Q 0/T
$$ 
where $A_{S'}$ is the pullback of $A$. 
By Remark \ref{rem-LMMP-scaling-induction}, we may assume that the 
LMMP on $K_{X'}+B'$ induces an LMMP$/T$ on $K_{S'}+B_{S'}$
with scaling of $cC_{S'}$ where  $C_{S'}$ is the pullback of $C$. 

Since $\lambda=\lim_{i\to \infty} \lambda_i=0$, $K_{S'}+B_{S'}$ is pseudo-effective$/T$ which means that 
$A_{S'}$ is vertical$/T$. By induction, $(S'/T,B_{S'})$ has a log minimal model.
Thus, by Theorem \ref{t-mmodel-term-scaling}, the above LMMP$/T$ on $K_{S'}+B_{S'}$ 
terminates. Thus,
the LMMP on $K_{X'}+B'$  terminates near $S$. The same argument applied to each 
component of $\rddown{B'}$ shows that the LMMP on $K_{X'}+B'$  terminates near $\rddown{B'}$. 

By assumptions, $K_X+B\sim_\Q P+C/Z$ where $P\ge 0$ and $\Supp P=\Supp \rddown{B}$. 
Moreover,  $\Supp \rddown{B}$ contains all the lc centres of $(X/Z,B)$. Thus, 
there is a $\Q$-divisor $P'\ge 0$ with $\Supp P'=\Supp \rddown{B'}$ such that 
 $K_{X'}+B'\sim_\Q P'+C'/Z$. So, each extremal ray contracted by the LMMP 
 on $K_{X'}+B'$ intersects $P'$.  
But the LMMP terminates near $\rddown{B'}$ which in turn implies that it terminates near $P'$ therefore it terminates everywhere.
\end{proof}

Next we use the above special termination to derive parts $(1)$ and $(3)$ of 
Theorem \ref{conj-main-general}, in the vertical case, from Theorem \ref{conj-main-general} 
in lower dimensions. The semi-ampleness statement in part $(2)$ 
will be proved afterwards using Theorem \ref{conj-s-ampleness}.

\begin{prop}\label{p-main-vertical}
Theorem \ref{conj-main-general} in dimension $d-1$ implies Theorem \ref{conj-main-general} $(1)(3)$
 in dimension $d$ in the vertical case, i.e. when every lc centre of $(X/Z,B)$  is vertical$/Z$.
\end{prop}
\begin{proof}
\emph{Step 1.} After taking a $\Q$-factorial dlt blowup using Corollary 
\ref{c-Q-factorial-blup} we may assume that 
$(X/Z,B)$ is $\Q$-factorial dlt, in particular, $\rddown{B}$ is 
vertical$/Z$. Run an LMMP$/Z$ on $K_X+B$ 
with scaling of an ample$/Z$ divisor. If $K_X+B$ is not pseudo-effective$/Z$, 
then the LMMP ends up with a Mori fibre space by [\ref{BCHM}].
 So, from now on we assume that 
$K_X+B$ is pseudo-effective$/Z$. 
 By Theorem \ref{t-mmodel-term-scaling}, the LMMP terminates 
if we show that $(X/Z,B)$ has a log minimal model. 
Since $K_X+B\sim_\Q -A/Z$, $A|_F=0$ where $F$ 
is the generic fibre of $f\colon X\to Z$. Thus, $A$ is vertical$/Z$ and 
$(K_X+B)|_F\sim_\Q 0$.

\emph{Step 2.} Let $t_1>t_2>\cdots$ be a sequence of sufficiently small rational numbers with 
$\lim_{i\to \infty} t_i=0$. Each $(X/Z,B-t_i\rddown{B})$ is klt and 
$(K_X+B-t_i\rddown{B})|_F\sim_\Q 0$ hence 
by Theorem \ref{t-main-klt} each $(X/Z,B-t_i\rddown{B})$ has a good log 
minimal model $(Y_i/Z,B_{Y_i}-t_i\rddown{B_{Y_i}})$ so that 
$Y_i\bir X$ does not contract divisors. Moreover, $\Supp \rddown{B_{Y_i}}$ contains 
all the lc centres of $(Y_i/Z,B_{Y_i})$ because $(Y_i/Z,B_{Y_i}-t_i\rddown{B_{Y_i}})$ is klt 
which means that $(Y_i/Z,B_{Y_i})$ is klt outside 
$\Supp \rddown{B_{Y_i}}$.
Now, since $A$ and $\rddown{B}$ are vertical$/Z$, there are vertical$/Z$ $\Q$-divisors $M,N\ge 0$ such that 
$$
K_X+B\sim_\Q -A\sim_\Q M/Z ~~~\mbox{and}~~~ -\rddown{B}\sim_\Q N/Z
$$ 
and
$$
K_X+B-t_i\rddown{B}\sim_\Q M+t_iN/Z
$$ 
So, any prime divisor contracted 
by $X\bir Y_i$ is a component of $M+N$ hence after replacing the sequence with a subsequence 
we can assume that the maps $X\bir Y_i$ contract the same divisors , i.e. $Y_i$ are 
isomorphic in codimension one.

\emph{Step 3.} Assume that $Y_i=Y_{i+1}$ for $i\gg 0$, and let $Y$ be this common model.
Since $K_Y+B_Y-t_i\rddown{B_{Y}}$ is nef$/Z$ for each $i\gg 0$, $K_Y+B_Y$ is also nef$/Z$. 
For any prime divisor $D$ on $X$ and each $i\gg 0$, we have 
$$
a(D,X,B)\le a(D,X,B-t_i\rddown{B})\le a(D,Y,B_Y-t_i\rddown{B_Y}) 
$$
which implies that 
$$
a(D,X,B)\le \lim_{i\to \infty} a(D,Y,B_Y-t_i\rddown{B_Y})=a(D,Y,B_Y)
$$
Therefore, $(Y/Z,B_Y)$ is a weak lc model of $(X/Z,B)$, and  
by Corollary \ref{c-wlc-to-mmodel}, we can construct a log minimal model of $(X/Z,B)$ 
as required.

\emph{Step 4.} 
Assume that $\rddown{B_{Y_1}}=0$. Then,  $K_{Y_1}+B_{Y_1}-t_i\rddown{B_{Y_1}}$ is nef$/Z$ 
for each $i$, and since $Y_1\bir Y_i$ is an isomorphism in codimension one,  
we can replace each $Y_i$ with $Y_1$ and then apply Step 3. 

 From now on we assume that $\rddown{B_{Y_1}}\neq 0$. 
 Since $(Y_1/Z,B_{Y_1}-t_1\rddown{B_{Y_1}})$ 
is klt and $K_{Y_1}+B_{Y_1}-t_1\rddown{B_{Y_1}}$ is nef$/Z$, 
$K_{Y_1}+B_{Y_1}-t_1\rddown{B_{Y_1}}$ is semi-ample$/Z$ by Theorem \ref{t-main-klt}.
 Thus,
$$
K_{Y_1}+B_{Y_1}-t_1\rddown{B_{Y_1}}\sim_\Q C_{Y_1}/Z
$$
for some $C_{Y_1}\ge 0$ such that 
$K_{Y_1}+B_{Y_1}+C_{Y_1}$ is lc, in particular, $\Supp C_{Y_1}$ does not contain any lc centre 
of $(Y_1/Z,B_{Y_1})$. For each 
rational number $t$ we have 
$$
K_{Y_1}+B_{Y_1}+tC_{Y_1}\sim_\Q (1+t)(K_{Y_1}+B_{Y_1}-\frac{tt_1}{1+t}\rddown{B_{Y_1}})/Z
$$
In particular, if $t_i'=\frac{t_i}{t_1-t_i}$ and $i\neq 1$, then 
$$
K_{Y_1}+B_{Y_1}+t_i'C_{Y_1}\sim_\Q (1+t_i')(K_{Y_1}+B_{Y_1}-t_i\rddown{B_{Y_1}})/Z
$$
We may assume that $2t_i\le t_1$ for each $i>1$ which implies that $t_i'\in [0,1]$. So, $(Y_i/Z,B_{Y_i}+t_i'{C_{Y_i}})$ is a weak lc model of $(Y_1/Z,B_{Y_1}+t_i'{C_{Y_1}})$ for each $i>1$.
Note that $(Y_1/Z,B_{Y_1}+t_i'{C_{Y_1}})$ is lc because $t_i'\le 1$.

\emph{Step 5.} By construction,
$$
K_{Y_2}+B_{Y_2}+A_{Y_2}\sim_\Q 0/Z
$$
and every lc centre of 
$(Y_2/Z,B_{Y_2})$ is contained in $\rddown{B_{Y_2}}$ because $(Y_2/Z,B_{Y_2}-t_2\rddown{B_{Y_2}})$ 
is klt. On the other hand, 
since  $A$ is vertical$/Z$, $A_{Y_2}$ is also vertical$/Z$, and if we put $P_{Y_2}=t_1\rddown{B_{Y_2}}$, 
then by construction 
$$
K_{Y_2}+B_{Y_2}\sim_\Q {P_{Y_2}}+ C_{Y_2}/Z
$$
Now $K_{Y_2}+B_{Y_2}+{t_2'}C_{Y_2}$ is nef$/Z$, so by Lemma \ref{l-s-termination}, we can run an 
LMMP$/Z$ on $K_{Y_2}+B_{Y_2}$ with scaling of $t_2'C_{Y_2}$ 
which terminates on a model $Y$ on which $K_Y+B_Y+\delta C_Y$ 
is nef$/Z$ for any sufficiently small $\delta\ge 0$.  
Since $Y_2\bir Y_i$ is an isomorphism in codimension one,   
$K_{Y_i}+B_{Y_i}+t_i'C_{Y_i}$ is nef$/Z$, and $\lim_{i\to \infty} t_i'=0$, we deduce that 
$K_{Y_2}+B_{Y_2}$ is (numerically) a limit of movable$/Z$ $\R$-divisors. 
So, the LMMP does not contract any divisors, i.e. $Y\bir Y_i$ is an isomorphism in codimension one.

For any $i\gg 0$, 
$K_Y+B_{Y}+t_i'{C_{Y}}$ is nef$/Z$ which in turn implies that 
$K_Y+B_{Y}-t_i\rddown{B_Y}$ is also nef$/Z$.
Since $Y\bir Y_i$ is an isomorphism in codimension one and 
since  $(Y_i/Z,B_{Y_i}-t_i\rddown{B_{Y_i}})$ is a log minimal model of 
 $(X/Z,B-t_i\rddown{B})$,  $(Y/Z,B_Y-t_i\rddown{B_Y})$ is a log minimal model of 
$(X/Z,B-t_i\rddown{B})$, for every $i\gg 0$.
By Step 3, we are done.
\end{proof}

\begin{prop}\label{p-main-vertical-sa}
Theorem \ref{conj-main-general} in dimension $d-1$ implies Theorem \ref{conj-main-general} in dimension $d$
in the vertical case, i.e. when every lc centre of $(X/Z,B)$  is vertical$/Z$.
\end{prop}
\begin{proof}
After taking a $\Q$-factorial dlt blowup using Corollary 
\ref{c-Q-factorial-blup} we may assume that 
$(X/Z,B)$ is $\Q$-factorial dlt, in particular, $\rddown{B}$ is 
vertical$/Z$. By Proposition \ref{p-main-vertical}, any LMMP$/Z$ on 
$K_X+B$ with scaling of an ample$/Z$ divisor ends up with a Mori fibre space or 
a log minimal model $(Y/Z,B_Y)$. 
 Assume that  $(Y/Z,B_Y)$ is a log minimal model.
Let $g\colon W\to X$ and $h\colon W\to Y$ be a common resolution, and let 
$A_Y:=h_*g^*A$. Then, 
$$
K_Y+B_Y+A_Y=h_*g^*(K_X+B+A)\sim_\Q 0/Z
$$
where we use the fact that $h_*g^*(K_X+B)=K_Y+B_Y$ which in turn follows from the fact that 
$g^*(K_X+B)-h^*(K_Y+B_Y)$ is exceptional$/Y$, by Remark \ref{rem-on-m-models}.
Also, $(Y/Z,B_Y+A_Y)$ is lc because 
$$
 h^*(K_Y+B_Y+A_Y)=g^*(K_X+B+A)
$$
 So, by replacing $(X/Z,B+A)$ with $(Y/Z,B_Y+A_Y)$ we may assume that $K_X+B$  is nef$/Z$. 
 It remains to prove that $K_X+B$ is semi-ample$/Z$.

Run an LMMP$/Z$ on $K_X+B-\epsilon \rddown{B}$ with scaling of some ample$/Z$ divisor, for some sufficiently 
small rational number $\epsilon>0$. Since $K_X+B$ is nef$/Z$ and $K_X+B+A\sim_\Q 0/Z$, 
$(K_X+B)|_F\sim_\Q 0$ where $F$ is the generic fibre of $X\to Z$. Moreover, since $\rddown{B}$ is 
vertical$/Z$, 
$$
(K_X+B-\epsilon \rddown{B})|_F\sim_\Q 0
$$ So, by 
Theorem \ref{t-main-klt}, the LMMP terminates 
on a model $X'$ on which $K_{X'}+B_{X'}-\epsilon \rddown{B_{X'}}$ is semi-ample$/Z$. 
On the other hand, by [\ref{B-II}, Proposition 3.2], $K_X+B$ is numerically trivial on each extremal ray 
contracted by the LMMP. Therefore, $K_{X'}+B_{X'}$ is also nef$/Z$. Another application of 
Theorem \ref{t-main-klt} shows that $K_{X'}+B_{X'}-\delta \rddown{B_{X'}}$ is semi-ample$/Z$ 
for any $\delta\in (0,\epsilon]$.
In addition, $X\bir X'$ is an isomorphism over the 
generic point of $Z$.

The pair $(X'/Z,B_{X'})$ is $\Q$-factorial and lc and  $(X'/Z,B_{X'}-\epsilon \rddown{B_{X'}})$ 
is klt. If $\rddown{B_{X'}}=0$ we are done so we can assume that $\rddown{B_{X'}}\neq 0$.
Let $(Y/Z,B_Y)$ be a $\Q$-factorial dlt blowup of $(X'/Z,B_{X'})$ and 
let $S$ be a component of $T:=\rddown{B_{Y}}$. Let $A_Y$ be the pullback of 
$A_{X'}$. Then, from 
$$
K_{X'}+B_{X'}+A_{X'}\sim_\Q 0/Z
$$ 
we get 
$$
K_{Y}+B_{Y}+A_{Y}\sim_\Q 0/Z
$$ 
which implies that 
$$
K_{S}+B_{S}+A_{S}\sim_\Q 0/Z
$$ 
where $K_S+B_S:=(K_Y+B_Y)|_S$ and $A_S:=A_Y|_S$.  
Now, by induction, $K_S+B_S$ is semi-ample$/Z$. On the other hand, if 
$P:= e^*\rddown{B_{X'}}$ and if $\delta>0$ is any sufficiently 
small rational number, where $e$ is the morphism $Y\to X'$, then 
$$
K_Y+B_Y-\delta P=e^*(K_{X'}+B_{X'}-\delta \rddown{B_{X'}})
$$ 
is semi-ample$/Z$. Now, by Theorem \ref{conj-s-ampleness}, 
$K_Y+B_Y$ is semi-ample$/Z$ hence $K_X+B$ is also semi-ample$/Z$.
\end{proof}


\subsection*{The horizontal case}
In this subsection, we deal with the horizontal case of Theorem \ref{conj-main-general}, 
that is, when some lc centre of $(X/Z,B)$ is horizontal$/Z$. 
First, we need the following result on finite generation of algebras.

\begin{lem}\label{t-fg-pullbacks}
Let $f\colon X\to Y/Z$ be a surjective morphism of normal varieties, projective 
over an affine variety $Z=\Spec R$, and $L$ a Cartier divisor on $Y$. 
If $R(X/Z,f^*L)$ is a finitely generated $R$-algebra,  
then $R(Y/Z,L)$ is also a finitely generated $R$-algebra.
\end{lem}

The lemma can be easily derived from the deep fact that $\x{O}_Y$ splits $f_*\x{O}_X$. 
However, a simpler proof (suggested by the referee and independently by Koll\'ar and Totaro) 
uses only basic commutative algebra: $R(X/Z,f^*L)$ 
is integral over $R(Y/Z,L)$ so the latter is finitely generated since the 
former is assumed to be finitely generated  (cf. [\ref{AM}, Proposition 7.8]). 
Okawa [\ref{Okawa}, Theorem 4.1] proves a more general statement.

We will reduce the horizontal case to the next lemma by finding a suitable 
Mori fibre space.

\begin{lem}\label{l-horizontal-trivial-1}
Assume Theorem \ref{conj-main-general} in dimension $d-1$.
Let $(X/Z,B+A)$ be of dimension $d$ as in Theorem \ref{conj-main-general}
such that $(K_X+B)|_F\sim_\Q 0$ for the generic fibre $F$ of $f\colon X\to Z$.
Moreover, assume that there is a contraction $g\colon X\to T/Z$ such that 

$(1)$ $K_X+B\sim_\Q 0/T$,

$(2)$ some lc centre of $(X/Z,B)$ is horizontal over $T$.\\
Then, $(X/Z,B)$ has a good log minimal model.
\end{lem}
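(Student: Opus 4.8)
The plan is to use adjunction along a horizontal lc centre to descend to dimension $d-1$, apply Conjecture \ref{conj-main-general} there, transport finite generation of the resulting divisorial algebra back up to $X$, and conclude by Theorem \ref{t-main-under-fg}. First I would take a $\Q$-factorial dlt blowup of $(X/Z,B)$ via Corollary \ref{c-Q-factorial-blup}; since this is crepant and a good log minimal model of the blowup is a good log minimal model of $(X/Z,B)$, I may assume that $(X/Z,B)$ is $\Q$-factorial dlt, that $K_X+B\sim_\Q 0/T$ and $(K_X+B)|_F\sim_\Q 0$ still hold, and that there is a component $S$ of $\rddown{B}$ with $g(S)=T$ — for the latter, an lc centre of $(X,B)$ horizontal over $T$ lifts to an lc centre of the blowup, which lies on a component of the reduced boundary, and that component must then dominate $T$ as well. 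Since $g$ is a contraction, I write $K_X+B\sim_\Q g^*D_T$ for some $\Q$-Cartier $\Q$-divisor $D_T$ on $T$.

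Next I apply adjunction along $S$: $(K_X+B)|_S=K_S+B_S$ with $(S,B_S)$ dlt of dimension $d-1$, and, since $S\not\subseteq\Supp A$ and the different is additive, $(K_X+B+A)|_S=K_S+B_S+A_S$ with $A_S:=A|_S\ge 0$ a $\Q$-Cartier divisor; hence $(S,B_S+A_S)$ is lc, $K_S+B_S+A_S\sim_\Q 0/Z$, and $S\to Z$ is surjective because $S$ dominates $T$ and $T$ dominates $Z$. So $(S/Z,B_S+A_S)$ satisfies the hypotheses of Conjecture \ref{conj-main-general} in dimension $d-1$, and by its parts $(1)$ and $(2)$ the pair $(S/Z,B_S)$ has either a Mori fibre space or a good log minimal model. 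In the first case $K_S+B_S$ is not pseudo-effective$/Z$ and $R(S/Z,K_S+B_S)$ reduces to a finite $\mathcal O_Z$-algebra; in the second it is the algebra of a semi-ample$/Z$ divisor on the minimal model. Either way, $R(S/Z,K_S+B_S)$ is a finitely generated $\mathcal O_Z$-algebra.

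Now I transfer finite generation to $X$; since this is local on $Z$ I may assume $Z$ is affine. Let $S\xrightarrow{h}T'\xrightarrow{\nu}T$ be the Stein factorisation of $S\to T$, so $h$ is a contraction and $\nu$ is finite. Fixing one integer $m>0$ making $m(K_X+B)$, $mD_T$ and $m(K_S+B_S)$ Cartier, we have $m(K_S+B_S)\sim h^*\nu^*(mD_T)$ and $m(K_X+B)\sim g^*(mD_T)$, so the projection formula gives $R(S/Z,m(K_S+B_S))=R(T'/Z,\nu^*(mD_T))$ and $R(X/Z,m(K_X+B))=R(T/Z,mD_T)$. Since finite generation of divisorial algebras is unaffected by passage to and from Veronese subalgebras, finite generation of $R(S/Z,K_S+B_S)$ gives that of $R(T'/Z,\nu^*(mD_T))$, hence of $R(T/Z,mD_T)$ by Theorem \ref{t-fg-pullbacks}, hence of $\mathcal R(X/Z,K_X+B)$. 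Now $(X/Z,B)$ is $\Q$-factorial dlt, $\mathcal R(X/Z,K_X+B)$ is finitely generated and $(K_X+B)|_{X_\eta}=(K_X+B)|_F\sim_\Q 0$, so Theorem \ref{t-main-under-fg} shows that any LMMP$/Z$ on $K_X+B$ with scaling of an ample divisor terminates with a good log minimal model; in particular $(X/Z,B)$ has one.

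I expect the main obstacle to be the descent through the horizontal lc centre itself: recognising that finite generation of the $(d-1)$-dimensional algebra $R(S/Z,K_S+B_S)$ controls $\mathcal R(X/Z,K_X+B)$, with the finite part $T'\to T$ of the Stein factorisation being exactly what necessitates Theorem \ref{t-fg-pullbacks}. The remainder is careful bookkeeping — fixing $m$ uniformly to pass between $\Q$-Cartier and Cartier data, and checking that adjunction along $S$ really produces a pair to which Conjecture \ref{conj-main-general} in dimension $d-1$ applies (additivity of the different, $A_S$ a boundary, surjectivity of $S\to Z$).
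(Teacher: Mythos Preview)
Your proposal is correct and follows essentially the same route as the paper: dlt blowup, adjunction along a horizontal component $S$ of $\rddown{B}$, Conjecture \ref{conj-main-general} in dimension $d-1$ on $(S/Z,B_S)$, then Theorem \ref{t-fg-pullbacks} to push finite generation down to $T$ and back up to $X$, finishing with Theorem \ref{t-main-under-fg}. Two cosmetic differences: the paper observes $(K_S+B_S)|_H\sim_\Q 0$ on the generic fibre $H$ of $S\to Z$, so the Mori fibre space alternative never arises and your case split is unnecessary; and the paper applies Theorem \ref{t-fg-pullbacks} directly to the (not necessarily contraction) map $S\to T$ rather than passing explicitly through the Stein factorisation $S\to T'\to T$, since that theorem already handles the finite part internally.
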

\begin{proof}
By replacing $(X/Z,B)$ with a $\Q$-factorial dlt blowup, 
we can assume that $(X/Z,B)$ is $\Q$-factorial dlt and that there is a 
component $S$ of $\rddown{B}$ which is horizontal$/T$. Run an LMMP$/Z$ on 
$K_X+B$ with scaling of some ample$/Z$ divisor. Since termination and semi-ampleness$/Z$ are 
local on $Z$, 
we can assume that $Z$ is affine, say $\Spec R$. 
By Theorem \ref{t-mmodel-term-scaling}, 
the LMMP terminates with a 
good log minimal model if we prove that $(X/Z,B)$ has a good log minimal model. 

By adjunction 
define $K_S+B_S:=(K_X+B)|_S$ and let $A_S:=A|_S$.
Then, 
$$
K_S+B_S+A_S=(K_X+B+A)|_S\sim_\Q 0/Z
$$ 
and $K_S+B_S\sim_\Q 0/T$. Moreover, $(K_S+B_S)|_H\sim_\Q 0$ where 
$H$ is the generic fibre of the induced morphism $h\colon S\to Z$.

Since we are assuming Theorem \ref{conj-main-general} in dimension $d-1$, 
$(S/Z,B_S)$ has a good log minimal model. 
Therefore, if $I(K_S+B_S)$ is Cartier for some $I\in\N$, 
then  $R(S/Z,I(K_S+B_S))$ is a finitely generated $R$-algebra (cf. [\ref{B-dam}]).
We can choose $I$ such that $I(K_X+B)$ is Cartier and such that 
$I(K_X+B)\sim g^*L$ for some Cartier divisor $L$ on $T$. So, 
$I(K_S+B_S)\sim e^*L$ where $e\colon S\to T$ is the induced morphism.
Now, by Lemma \ref{t-fg-pullbacks}, $R(T/Z,L)$ is a finitely 
generated $R$-algebra which in turn implies that $R(X/Z,I(K_X+B))$
is a finitely generated $R$-algebra since $X\to T$ is a contraction. 
Therefore, $R(X/Z,K_X+B)$
is a finitely generated $R$-algebra and according to Theorem \ref{t-main-under-fg}, $(X/Z,B)$ has a good log minimal model.
\end{proof}

\begin{prop}\label{t-main-horizontal}
Assume Theorem \ref{conj-main-general} in dimension $d-1$, and
assume Theorem \ref{conj-main-general} in dimension $d$ in the vertical case.
Then, Theorem \ref{conj-main-general} holds in dimension $d$ in the horizontal case, i.e. 
when some lc centre of $(X/Z,B)$ is horizontal over $Z$.
\end{prop}
\begin{proof}
We can assume that $(X/Z,B)$ is $\Q$-factorial dlt and that $f\colon X\to Z$ is a contraction. 
By assumptions, some component of $\rddown{B}$ is 
horizontal$/Z$.
Run an LMMP$/Z$ on $K_X+B$ with scaling of some ample$/Z$ divisor. If $A$ is not vertical$/Z$, 
then $K_X+B$ is not pseudo-effective$/Z$ hence the LMMP terminates with a 
Mori fibre space by [\ref{BCHM}]. So, we can assume that $A$ is vertical$/Z$ hence 
$(K_X+B)|_F\sim_\Q 0$ where $F$ is the generic fibre of $f\colon X\to Z$. 
By Theorem \ref{t-mmodel-term-scaling}, the LMMP terminates with a good log minimal model 
if we show that $(X/Z,B)$ has a 
good log minimal model.

If $\epsilon>0$ is a sufficiently small rational number, then 
$(X/Z,B-\epsilon \rddown{B})$ is klt and $K_X+B-\epsilon \rddown{B}$ is not pseudo-effective$/Z$.
Thus, by [\ref{BCHM}] there is a 
Mori fibre space $(Y/Z,B_Y-\epsilon \rddown{B_Y})$ for $(X/Z,B-\epsilon \rddown{B})$ 
obtained by running an LMMP$/Z$ on $K_X+B-\epsilon \rddown{B}$. 
Let $g\colon Y\to T/Z$ be the $K_Y+B_Y-\epsilon \rddown{B_Y}$-negative extremal 
contraction which defines the Mori fibre space structure, and let $R$ be the corresponding 
extremal ray. Since $A$ is vertical$/Z$, $A_Y$ is vertical over $T$. So, from  
$$
K_Y+B_Y+A_Y\sim_\Q 0/Z
$$
we deduce that $(K_Y+B_Y)\cdot R=0$ which in turn implies that $K_Y+B_Y\sim_\Q 0/T$. Since 
$K_Y+B_Y-\epsilon \rddown{B_Y}$ is numerically negative$/T$,  $\rddown{B_Y}\neq 0$.
We could apply Lemma \ref{l-horizontal-trivial-1} to get a good log 
minimal model of $(Y/Z,B_Y)$ but this does not give a good log minimal model of $(X/Z,B)$ 
because we cannot easily compare the singularities of $(Y/Z,B_Y)$ and $(X/Z,B)$.
We need to find a model which is closely related to both $(Y/Z,B_Y)$ and $(X/Z,B)$.

Let $(W/Z,B_W)$ be a log smooth model of $(X/Z,B)$ of type (1) as in Definition \ref{d-log-smooth-model} 
such that $W$ dominates $Y$, say by a morphism $h\colon W\to Y$. 
 We can write 
$$
K_W+B_W=h^*(K_Y+B_Y)+G
$$ 
where $G$ is exceptional$/Y$. Run the LMMP$/Y$ on $K_W+B_W$ with scaling of some ample$/Y$ divisor. 
By Theorem \ref{t-exc-LMMP}, at some step of this LMMP we reach a model $X'$ on which $G_{X'}\le 0$. Also, 
$$
K_{X'}+B_{X'}-G_{X'}\sim_\Q 0/T
$$ 
as it is the 
pullback of $K_Y+B_Y$. 
Since $K_{X}+B$ is pseudo-effective$/Z$, $K_W+B_W$ and $K_{X'}+B_{X'}$ are pseudo-effective over both 
$Z$ and $T$. Thus, $G_{X'}$ is 
vertical$/T$ hence $K_{X'}+B_{X'}\sim_\Q 0$ over the generic point of $T$. 
By construction, $\rddown{B_Y}$ is ample$/T$ hence in particular some of its components are horizontal$/T$. 
This implies that some component of $\rddown{B_{X'}}$ is horizontal$/T$ because $\rddown{B_Y}$ is the pushdown of $\rddown{B_{X'}}$.

Now by Lemma \ref{l-horizontal-trivial-2} below,  we can run an LMMP$/T$ on $K_{X'}+B_{X'}$ 
which ends up with a model $X''$ 
on which $K_{X''}+B_{X''}$ is semi-ample$/T$ (to be more precise, in place of $X,Y,Z,B,A$ in the lemma use  
$X'$, $Y$, $T$, $B_{X'}$, $-G_{X'}$ respectively). Since $K_{X'}+B_{X'}\sim_\Q 0$ over 
the generic point of $T$, $X'\bir X''$ is an isomorphism over the generic point of $T$.
So, if $X''\to T''/T$ is the contraction associated 
to $K_{X''}+B_{X''}$, then $T''\to T$ is birational
and some component of $\rddown{B_{X''}}$ is horizontal$/T''$. 

On the other hand,  if we denote $X'\to Y$ by $e$, then 
we can write 
$$
K_{X'}+B_{X'}+A_{X'}=e^*(K_Y+B_Y+A_Y)
$$ 
by taking $A_{X'}:=-G_{X'}+e^*A_Y$. Thus,  
$$
K_{X'}+B_{X'}+A_{X'}\sim_\Q 0/Z
$$ 
and 
$$
K_{X''}+B_{X''}+A_{X''}\sim_\Q 0/Z
$$ 
 where $A_{X''}$ is the birational transform of $A_{X'}$. 
 Since $K_{X''}+B_{X''}$ is pseudo-effective$/Z$, $A_{X''}$ is vertical$/Z$ and 
 $K_{X''}+B_{X''}\sim_\Q 0$ over the generic point of $Z$.
 Now we can apply Lemma \ref{l-horizontal-trivial-1} (by taking $X,Z,T,B,A$ in the Lemma 
 to be $X'',Z,T'',B_{X''},A_{X''}$ respectively) to get a good log minimal model of 
 $({X''}/Z,B_{X''})$ which would give a good log minimal model of $(W/Z,B_W)$ hence of 
 $(X/Z,B)$, by Remark \ref{rem-log-smooth-model}. 
\end{proof}

\begin{lem}\label{l-horizontal-trivial-2}
Assume Theorem \ref{conj-main-general} in dimension $d$ in the vertical case.
Let $(X/Z,B+A)$ be of dimension $d$ as in Theorem \ref{conj-main-general}
such that $(K_X+B)|_F\sim_\Q 0$ for the generic fibre $F$ of $f\colon X\to Z$.
Moreover, assume that

$\bullet$ $(X/Z,B)$ is $\Q$-factorial dlt,

$\bullet$ there exist contractions $e\colon X\to Y$ and $g\colon Y\to Z$,

$\bullet$ $e$ is birational, $A$ is exceptional$/Y$, and $Y$ is $\Q$-factorial, 

$\bullet$ $D:=K_Y+B_Y-\epsilon \rddown{B_Y}$ is klt and $g$ is a $D$-negative 
extremal contraction  where $\epsilon>0$ and $B_Y:=e_*B$.

Then, any LMMP$/Z$ on $K_X+B$ with scaling of an ample$/Z$ divisor  
terminates with a good log minimal model of $(X/Z,B)$.
\end{lem}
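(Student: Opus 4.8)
The plan is to lift the problem to $Y$, where the pair becomes vertical so that the assumed case of Conjecture \ref{conj-main-general} applies, and then to descend the resulting model to $Z$. As a first reduction I would pass to $Z$ affine, since termination of an LMMP with scaling and semi-ampleness are local on the base; by [\ref{Birkar-II-supplement}] it then suffices to produce a single good log minimal model of $(X/Z,B)$.

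\textbf{Geometry over $Y$.} Pushing $K_X+B+A\sim_\Q 0/Z$ down along $e$ and using that $A$ is exceptional$/Y$ gives $K_Y+B_Y\sim_\Q 0/Z$; since $Y$ is $\Q$-factorial this is a relation of $\Q$-Cartier divisors, so also $K_X+B+A\sim_\Q 0/Y$, and writing $K_X+B+A=e^*(K_Y+B_Y)+G$ with $G$ exceptional$/Y$, the negativity lemma applied to $G$ and to $-G$ forces $G=0$. Thus $K_X+B+A=e^*(K_Y+B_Y)$: $(X/Y,B+A)$ is a lc pair, crepant over $(Y/Z,B_Y)$, with $A$ $\Q$-Cartier and $K_X+B+A\sim_\Q 0/Y$; moreover $A$ is very exceptional over $Y$ and, since $(K_X+B)|_F\sim_\Q 0$ makes $A$ vertical$/Z$, also over $Z$. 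The hypothesis that $D=K_Y+B_Y-\epsilon\rddown{B_Y}$ is klt while $g$ is $D$-negative forces $\rddown{B_Y}\ne 0$, and as $D\sim_\Q-\epsilon\rddown{B_Y}/Z$ it shows $\rddown{B_Y}$ is ample$/Z$. Finally every lc centre of $(X/Y,B)$, equivalently of $(X/Y,B+A)$, is vertical$/Y$: it is a proper closed subset of $X$ and $e$ is birational. Hence $(X/Y,B+A)$ satisfies the hypotheses of Conjecture \ref{conj-main-general} in dimension $d$ in the vertical case, which is assumed.

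\textbf{Descent to $Z$.} I would run an LMMP$/Y$ on $K_X+B$ with scaling of an ample$/Y$ divisor; it terminates by part (3) of the vertical case, producing $X\bir X'$ with $e'\colon X'\to Y$ and $K_{X'}+B_{X'}+A_{X'}=e'^*(K_Y+B_Y)$ (the crepant relation and $\sim_\Q 0/Y$ being preserved step by step). It cannot end with a Mori fibre space $\pi\colon X'\to W/Y$: then $A_{X'}\sim_\Q-(K_{X'}+B_{X'})/Y$ would be $\pi$-ample, and since $\dim X'>\dim W$ and $W\to Y$ is surjective this forces $A_{X'}$ to dominate $Y$, against $A_{X'}$ being exceptional$/Y$; and $A_{X'}=0$ would make $-(K_{X'}+B_{X'})$ at once $\pi$-ample and $\pi$-trivial. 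So the LMMP ends with $K_{X'}+B_{X'}$ nef$/Y$; using part (2) of the vertical case (semi-ampleness$/Y$) together with the negativity lemma applied to the very exceptional divisor $A_{X'}$, and exploiting that a scaling LMMP makes $K_{X'}+B_{X'}$ a limit of movable$/Y$ classes (as in the proof of Theorem \ref{t-exceptional-LMMP}), I would deduce $A_{X'}=0$, i.e. $K_{X'}+B_{X'}=e'^*(K_Y+B_Y)\sim_\Q 0/Z$. Then $X\bir X'$ is a $(K_X+B)$-negative LMMP that extracts no divisor, so $(X'/Z,B_{X'})$ is a $\Q$-factorial dlt log minimal model of $(X/Z,B)$; and since $X'\to Z$ is a morphism, $Z$ is affine and $K_{X'}+B_{X'}\sim_\Q 0/Z$, the divisor $K_{X'}+B_{X'}$ is semi-ample$/Z$. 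Thus $(X'/Z,B_{X'})$ is a good log minimal model, and [\ref{Birkar-II-supplement}] finishes the proof.

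\textbf{Main obstacle.} Ruling out the Mori fibre space output over $Y$ is straightforward; the delicate step is the vanishing $A_{X'}=0$ — equivalently, showing that no log minimal model over $Y$ can carry $K_{X'}+B_{X'}\sim_\Q-A_{X'}/Y$ as a nonzero nef class with $A_{X'}\ge 0$ exceptional$/Y$. This is exactly where the klt hypothesis on $D$ and the $D$-negativity of $g$ (hence $\rddown{B_Y}\ne 0$ and $\rddown{B_Y}$ ample$/Z$) must be brought in, and I expect the main technical effort to be a negativity argument in the style of the proof of Theorem \ref{t-exceptional-LMMP}, but adapted to the opposite sign on the effective exceptional divisor.
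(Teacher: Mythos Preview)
Your reduction over $Y$ is fine up to the point where you reach a model $X'$ with $K_{X'}+B_{X'}$ nef (indeed semi-ample) over $Y$ and $K_{X'}+B_{X'}\sim_\Q -A_{X'}/Y$ with $A_{X'}\ge 0$ exceptional$/Y$. The gap is precisely where you flag it: the vanishing $A_{X'}=0$ is \emph{not} a consequence of these data, and the ``opposite sign'' negativity argument you hope for does not exist. In Theorem \ref{t-exceptional-LMMP} one has $K+B\sim_\R M$ with $M\ge 0$ very exceptional, so when $K+B$ becomes a limit of movable classes Lemma \ref{l-negativity} forces $M\le 0$. Here you have $K_{X'}+B_{X'}\sim_\Q -A_{X'}$, so nefness of $K_{X'}+B_{X'}$ gives $-A_{X'}$ nef$/Y$, i.e.\ $A_{X'}\cdot C\le 0$ on vertical curves; Lemma \ref{l-negativity} requires the opposite inequality on curves in $\Supp A_{X'}$ and therefore yields nothing. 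Concretely, if $e\colon X\to Y$ is a single blow-up with exceptional divisor $E$, then $-E$ is ample$/Y$ while $E\neq 0$; so ``$-A$ nef$/Y$, $A\ge 0$ exceptional$/Y$'' is perfectly compatible with $A\neq 0$. In particular, the good log minimal model of $(X/Z,B)$ will in general \emph{not} satisfy $K+B\sim_\Q 0/Z$, so your target is wrong as well as the method.

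The paper's argument does not attempt to kill $A$. Instead it uses the last bullet in a way your proposal never does: since $K_Y+B_Y\sim_\Q 0/Z$ and $g$ is $(K_Y+B_Y-\epsilon\rddown{B_Y})$-negative extremal, $\rddown{B_Y}$ is ample$/Z$, hence $e^*\rddown{B_Y}$ is semi-ample$/Z$. After arranging (via the vertical case over $Y$, passing to the lc model, and a dlt blowup) that $\Supp e^*\rddown{B_Y}\subseteq \Supp B$ and that $\Supp e^*\rddown{B_Y}$ contains every component of $\rddown{B}$, one writes
\[
K_X+B=\bigl(K_X+B-\tau\, e^*\rddown{B_Y}\bigr)+\tau\, e^*\rddown{B_Y}\sim_\Q K_X+\Delta/Z
\]
for small rational $\tau>0$, where the first bracket is klt and the second is semi-ample$/Z$, so $(X/Z,\Delta)$ is klt with $K_X+\Delta\sim_\Q K_X+B/Z$. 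Then Theorem \ref{t-main-klt-termination} gives a good log minimal model of $(X/Z,\Delta)$, hence of $(X/Z,B)$. The point you are missing is that the hypotheses on $D$ and $g$ are there to make $\rddown{B_Y}$ ample$/Z$ and thereby \emph{reduce to the klt case over $Z$}, not to force $A$ to vanish.
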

\begin{proof}
By Theorem \ref{t-mmodel-term-scaling}, it is enough to prove that $(X/Z,B)$ has a 
good log minimal model.
By adding a small multiple of $A$ to $B$ we can assume that $\Supp A\subseteq \Supp B$.
Since $e$ is birational, by Theorem \ref{conj-main-general} in dimension $d$ in the vertical case, 
we can assume that $K_X+B$ is actually 
semi-ample$/Y$. By replacing $X$ with the lc model of $(X/Y,B)$ we can 
assume that $K_X+B$ is ample$/Y$ (we may loose the $\Q$-factorial dlt property of $(X/Z,B)$ 
but we will recover it later).
  Since $-A$ is 
ample$/Y$, $\Supp A$ contains all the prime exceptional$/Y$ divisors on $X$. 
 In particular, 
$$
\Supp e^*\rddown{B_Y}\subseteq (\Supp \rddown{B}\cup \Supp A) \subseteq \Supp B
$$ 
By replacing $(X/Z,B)$ with a 
$\Q$-factorial dlt blowup, we can again assume that $(X/Z,B)$ is $\Q$-factorial dlt; 
note that this preserves the property $\Supp e^*\rddown{B_Y}\subseteq \Supp B$.

Since $A$ is exceptional$/Y$ and $K_X+B+A\sim_\Q 0/Z$, $K_Y+B_Y\sim_\Q 0/Z$. So, $\rddown{B_Y}$ is 
ample$/Z$ hence $e^*\rddown{B_Y}$ 
is semi-ample$/Z$. Thus, for a small rational number $\tau >0$ we can write
$$
K_X+B=K_X+B-\tau e^*\rddown{B_Y}+\tau e^*\rddown{B_Y}\sim_\Q K_X+\Delta/Z
$$ 
where $\Delta$ is some rational boundary such that 
$(X/Z,\Delta)$ is klt: since 
$(Y/Z,B_Y-\epsilon \rddown{B_Y})$ is klt, $\rddown{B_Y}$ contains all 
the lc centres of $(Y/Z,B_Y)$, in particular, the image of all the lc centres 
of $(X/Z,B)$; so, $\Supp e^*\rddown{B_Y}$ contains all the components of 
$\rddown{B}$ hence $(X/Z,B-\tau e^*\rddown{B_Y})$ is klt and we can indeed 
find $\Delta$ with the required properties.
By Theorem \ref{t-main-klt}, we can 
run an LMMP$/Z$ on  $K_X+\Delta$ which ends up with a good log minimal model 
of $(X/Z,\Delta)$ hence a good log minimal model of $(X/Z,B)$.
\end{proof}

\begin{proof}(of Theorem \ref{conj-main-general})
We argue by induction so in particular we may assume that Theorem \ref{conj-main-general} 
holds in dimension $d-1$. By Proposition \ref{p-main-vertical-sa}, 
Theorem \ref{conj-main-general} holds in dimension $d$ in the vertical case.
On the other hand, by Proposition \ref{t-main-horizontal}, 
 Theorem \ref{conj-main-general} also holds in dimension $d$ in the horizontal case.
\end{proof}

\begin{proof}(of Corollary \ref{t-lc-flips} )
First assume that $B$ has rational coefficients.
Since $-(K_X+B)$ is ample$/Y$, we can find 
$$
 A\sim_\Q -(K_X+B)/Y
$$
 such that $A\ge 0$, 
$(X/Z,B+A)$ is lc, and 
$$
K_X+B+A\sim_\Q 0/Y
$$ 
Now by Theorem  \ref{conj-main-general}, $(X/Y,B)$ has a good log minimal model 
and its lc model gives the $K_X+B$-flip. 
When $B$ is not rational, we can find 
rational lc divisors $K_X+B_i$ and real numbers $r_i>0$ such that 
$$
K_X+B=\sum r_i(K_X+B_i) ~~~~~~\mbox{and $\sum r_i=1$}
$$
 Moreover, we can assume that  
$-(K_X+B_i)$ is ample$/Y$ for every $i$. For each $i,j$ there is a rational number $b_{i,j}$ 
such that 
$$
K_X+B_i\equiv b_{i,j}(K_X+B_j)/Y
$$ 
By the cone theorem for lc pairs proved 
by Ambro [\ref{Ambro}] and Fujino [\ref{Fujino-lc-lmmp}], 
$$
K_X+B_i\sim_\Q b_{i,j}(K_X+B_j)/Y
$$
 Therefore, there is a morphism $X^+\to Y$
which gives the flip of $K_X+B_i$ for every $i$.
The morphism also gives the $K_X+B$-flip. 
\end{proof}

\vspace{0.3cm}

\section{\textbf{Proof of Theorem \ref{t-acc-main-general-modified}}}

In this section, we give the proof of Theorem \ref{t-acc-main-general-modified}
which is parallel to the arguments of section 6 with some small changes.

\subsection*{The vertical case}
First, we deal with Theorem \ref{t-acc-main-general-modified} in the vertical case.
It is helpful to recall Remarks  \ref{rem-lifting-flips} and \ref{rem-LMMP-scaling-induction} before reading the 
proof of the next special termination  result.

\begin{lem}\label{l-acc-s-termination}
Let $(X/Z,B)$ be a lc pair of dimension $d$ as in Theorem \ref{t-acc-main-general-modified}
such that it satisfies the assumptions of Lemma \ref{l-LMMP-scaling}.
Then, assuming Conjecture \ref{ACC} and Theorem \ref{t-acc-main-general-modified} in dimension $d-1$, 
there is an LMMP$/Z$ on $K_X+B$ with scaling of $cC$ which terminates.
\end{lem}
\begin{proof}
By Lemma \ref{l-LMMP-scaling}, there is an LMMP$/Z$ 
on $K_X+B$ with scaling of $cC$ such that $\lambda=\lim_{i\to \infty} \lambda_i=0$. We can assume that 
the LMMP consists of only log flips. Let $(X'/Z,B')$ be a $\Q$-factorial dlt blowup 
of $(X/Z,B)$ and let $C'$ be the birational transform of $C$. 
By Remark \ref{rem-lifting-flips} (1)(2), 
we can lift the above LMMP$/Z$ on $K_X+B$ to an LMMP$/Z$ on $K_{X'}+B'$ with scaling of $cC'$. 
We could assume that the latter LMMP consists of only log flips.

Let $S$ be a component of 
$\rddown{B'}$ and let $T$ be the normalisation of the image 
of $S$ in $Z$. Put 
 $K_{S}+B_{S}:=(K_{X'}+B')|_{S}$ and let $(S',B_{S'})$ be a $\Q$-factorial dlt blowup 
of $(S,B_{S})$. 
By Remark \ref{rem-LMMP-scaling-induction}, we may assume that the 
LMMP on $K_{X'}+B'$ induces an LMMP$/T$ on $K_{S'}+B_{S'}$
with scaling of $cC_{S'}$ where  $C_{S'}$ is the pullback of $C$. 
On the other hand, by assumptions, 
there is a non-empty open subset $U\subseteq Z$ such that 
$K_{X}+B\sim_\Q 0$ over $U$ and such that the generic point of each lc centre of 
$({X}/Z,B)$ is mapped into $U$. Let $V\subseteq T$ be the inverse image of $U$ 
under $T\to Z$. Then, $K_{S'}+B_{S'}\sim_\Q 0$ over $V$ and the generic point of each lc centre of 
 $(S'/T,B_{S'})$ is mapped into $V$.  

 By induction, $(S'/T,B_{S'})$ has a log minimal model. 
Since $\lambda=\lim_{i\to \infty} \lambda_i=0$, by Theorem \ref{t-mmodel-term-scaling}, the 
above LMMP$/T$ on $K_{S'}+B_{S'}$ terminates. Thus,
the LMMP$/Z$ on $K_{X'}+B'$  terminates near $S$. The same argument applied to each 
component of $\rddown{B'}$ shows that the LMMP$/Z$ on $K_{X'}+B'$  terminates near $\rddown{B'}$. 
By assumptions, $K_X+B\sim_\Q P+C/Z$ where $P\ge 0$ and $\Supp P=\Supp \rddown{B}$. 
Moreover,  $\Supp \rddown{B}$ contains all the lc centres of $(X/Z,B)$. Thus, 
there is a $\Q$-divisor $P'\ge 0$ with $\Supp P'=\Supp \rddown{B'}$ such that 
 $K_{X'}+B'\sim_\Q P'+C'/Z$.
 So, each extremal ray contracted by the LMMP$/Z$ 
 on $K_{X'}+B'$ intersects $P'$.  
But the LMMP terminates near $\rddown{B'}$ which in turn implies that it terminates near $P'$ therefore it terminates everywhere.
\end{proof}

\begin{prop}\label{p-acc-main-vertical}
Theorem \ref{t-acc-main-general-modified} in dimension $d-1$ implies Theorem 
\ref{t-acc-main-general-modified} (1)(3)
 in dimension $d$ in the vertical case, i.e. when every lc centre of $(X/Z,B)$  is vertical$/Z$.
\end{prop}
\begin{proof}
\emph{Step 1.} After taking a $\Q$-factorial dlt blowup using Corollary 
\ref{c-Q-factorial-blup} we may assume that 
$(X/Z,B)$ is $\Q$-factorial dlt, in particular, $\rddown{B}$ is 
vertical$/Z$. Run an LMMP$/Z$ on $K_X+B$ 
with scaling of an ample$/Z$ divisor.  By Theorem \ref{t-mmodel-term-scaling}, the LMMP terminates 
if we prove that $(X/Z,B)$ has a log minimal model. By assumptions, 
$K_X+B\sim_\Q 0$ over some non-empty open subset $U\subseteq Z$ and if $\eta$ is the 
generic point of any lc centre of $(X/Z,B)$, then $f(\eta)\in  U$. 

\emph{Step 2.} Let $t_1>t_2>\cdots$ be a sequence of sufficiently small rational numbers with 
$\lim_{i\to \infty} t_i=0$. Each $(X/Z,B-t_i\rddown{B})$ is klt and 
$(K_X+B-t_i\rddown{B})|_F\sim_\Q 0$ where $F$ is the generic fibre of $f$. 
Hence by Theorem \ref{t-main-klt} each $(X/Z,B-t_i\rddown{B})$ has a good log 
minimal model $(Y_i/Z,B_{Y_i}-t_i\rddown{B_{Y_i}})$ so that $Y_i\bir X$ does not 
contract divisors. Since we are assuming 
Conjecture \ref{ACC} in dimension $d$, we can assume that 
each $(Y_i/Z,B_{Y_i})$ is lc.
Moreover, $\Supp \rddown{B_{Y_i}}$ contains 
all the lc centres of $(Y_i/Z,B_{Y_i})$ because $(Y_i/Z,B_{Y_i}-t_i\rddown{B_{Y_i}})$ is klt 
which means that $(Y_i/Z,B_{Y_i})$ is klt outside 
$\Supp \rddown{B_{Y_i}}$. Since $K_X+B\sim_\Q 0$ over $U$ and since $\rddown{B}$ is vertical$/Z$, 
there are vertical$/Z$ $\Q$-divisors $M,N\ge 0$ such that 
$$
K_X+B\sim_\Q M/Z ~~~~~~\mbox{and} ~~~~~~-\rddown{B}\sim_\Q N/Z
$$ 
and
$$
K_X+B-t_i\rddown{B}\sim_\Q M+t_iN/Z
$$ 
So, any prime divisor contracted 
by $X\bir Y_i$ is a component of $M+N$ hence after replacing the sequence with a subsequence 
we can assume that the maps $X\bir Y_i$ contract the same divisors, i.e. $Y_i$ are 
isomorphic in codimension one.

\emph{Step 3.} Assume that $Y_i=Y_{i+1}$ for $i\gg 0$, and let $Y$ be this common model.
Since $K_Y+B_Y-t_i\rddown{B_{Y}}$ is nef$/Z$ for each $i\gg 0$, $K_Y+B_Y$ is also nef$/Z$. 
For any prime divisor $D$ on $X$ and each $i\gg 0$, we have 
$$
a(D,X,B)\le a(D,X,B-t_i\rddown{B})\le a(D,Y,B_Y-t_i\rddown{B_Y}) 
$$
which implies that 
$$
a(D,X,B)\le \lim_{i\to \infty} a(D,Y,B_Y-t_i\rddown{B_Y})=a(D,Y,B_Y)
$$
Therefore, $(Y/Z,B_Y)$ is a weak lc model of $(X/Z,B)$, and  
by Corollary \ref{c-wlc-to-mmodel}, we can construct a log minimal model of $(X/Z,B)$ 
as required.

\emph{Step 4.}
Assume that $\rddown{B_{Y_1}}=0$. Then,  $K_{Y_1}+B_{Y_1}-t_i\rddown{B_{Y_1}}$ is nef$/Z$ 
for each $i$, and since $Y_1\bir Y_i$ is an isomorphism in codimension one,  
we can replace each $Y_i$ with $Y_1$ and then apply Step 3.

 From now on we assume that $\rddown{B_{Y_1}}\neq 0$. 
Then, 
since $(Y_1/Z,B_{Y_1}-t_1\rddown{B_{Y_1}})$ is klt and $K_{Y_1}+B_{Y_1}-t_1\rddown{B_{Y_1}}$ is 
nef$/Z$, $K_{Y_1}+B_{Y_1}-t_1\rddown{B_{Y_1}}$ is semi-ample$/Z$ by Theorem \ref{t-main-klt}.
Thus,
$$
K_{Y_1}+B_{Y_1}-t_1\rddown{B_{Y_1}}\sim_\Q C_{Y_1}/Z
$$
for some $C_{Y_1}\ge 0$ such that 
$K_{Y_1}+B_{Y_1}+C_{Y_1}$ is lc, in particular, $\Supp C_{Y_1}$ does not contain any lc centre 
of $(Y_1/Z,B_{Y_1})$. For each 
rational number $t$ we have 
$$
K_{Y_1}+B_{Y_1}+tC_{Y_1}\sim_\Q (1+t)(K_{Y_1}+B_{Y_1}-\frac{tt_1}{1+t}\rddown{B_{Y_1}})/Z
$$
In particular, if $t_i'=\frac{t_i}{t_1-t_i}$ and $i\neq 1$, then 
$$
K_{Y_1}+B_{Y_1}+t_i'C_{Y_1}\sim_\Q (1+t_i')(K_{Y_1}+B_{Y_1}-t_i\rddown{B_{Y_1}})/Z
$$
We may assume that $2t_i\le t_1$ for each $i>1$ which implies that $t_i'\in [0,1]$. So, $(Y_i/Z,B_{Y_i}+t_i'{C_{Y_i}})$ is a weak lc model of $(Y_1/Z,B_{Y_1}+t_i'{C_{Y_1}})$ for each $i>1$.
Note that $(Y_1/Z,B_{Y_1}+t_i'{C_{Y_1}})$ is lc because $t_i'\le 1$.

\emph{Step 5.} We will modify the situation so that 
if  $\eta$ is the 
generic point of any lc centre of $({Y_2}/Z,B_{Y_2})$, then $\eta$ is mapped into $U$. 
Put $P_{Y_2}=t_1\rddown{B_{Y_2}}$. Then, 
$$
K_{Y_2}+B_{Y_2}\sim_\Q P_{Y_2}+C_{Y_2}/Z
$$
By Lemma \ref{l-LMMP-scaling}, we can run an LMMP$/Z$ on $K_{Y_2}+B_{Y_2}$ with 
scaling of $t_2'C_{Y_2}$ so that if $\lambda_k$ are the numbers appearing in the LMMP, then $\lambda=\lim_{k\to \infty} \lambda_k=0$. Since $Y_2\bir Y_i$ is an isomorphism in codimension one,   
$K_{Y_i}+B_{Y_i}+t_i'C_{Y_i}$ is nef$/Z$, and $\lim_{i\to \infty} t_i'=0$, we deduce that 
$K_{Y_2}+B_{Y_2}$ is (numerically) a limit of movable$/Z$ $\R$-divisors. 
So, the LMMP does not contract any divisors.
Now, by replacing the $t_i$ with a subsequence,
we can assume that for each $i>2$ there is $k$ such that 
$\lambda_k\ge t_i'\ge \lambda_{k+1}$. If $T$ is the variety corresponding to $\lambda_{k+1}$, 
then $K_T+B_T+t_i'C_T$ is nef$/Z$. By replacing $Y_i$ with $T$, we could assume that 
$Y_i$ occurs in some step of the LMMP. Thus, we can assume that every $Y_i$ 
occurs in some step of the LMMP if $i>1$.

After finitely many steps, the LMMP does not contract any lc centres. So, 
perhaps after replacing $({Y_2}/Z,B_{Y_2})$ with some $({Y_j}/Z,B_{Y_j})$, with $j\gg 0$,
we may assume that the LMMP does not contract any lc centre of $({Y_2}/Z,B_{Y_2})$. 
Assume that $\eta$ is the 
generic point of some lc centre of $({Y_2}/Z,B_{Y_2})$ which is mapped outside $U$. 
If $E$ is a prime divisor over $\eta$ such that 
$a(E,{Y_2},B_{Y_2})=0$, then $a(E,X,B)>0$ and for any $i\ge 2$ we have 
 \begin{equation*}
\begin{split}
a(E,{Y_2},B_{Y_2}-t_i\rddown{B_{Y_2}})= & a(E,{Y_i},B_{Y_i}-t_i\rddown{B_{Y_i}})\\
\ge & a(E,X,B-t_i\rddown{B})\\
\ge & a(E,X,B)
\end{split}
\end{equation*}
where the equality follows from the fact that $Y_2\bir Y_i$ is an isomorphism near $\eta$.
Then, 
$$
0=a(E,{Y_2},B_{Y_2})=\lim_{i\to \infty} a(E,{Y_2},B_{Y_2}-t_i\rddown{B_{Y_2}})\ge a(E,X,B)>0
$$
which is a contradiction. So, from now on we can assume that 
if $\eta$ is the generic point of any lc centre of $({Y_2}/Z,B_{Y_2})$, then $\eta$ is mapped into $U$.
Moreover, by construction, $K_{Y_2}+B_{Y_2}\sim_\Q 0$ over $U$.

\emph{Step 6.} 
By Lemma \ref{l-acc-s-termination}, we can run an 
LMMP$/Z$ on $K_{Y_2}+B_{Y_2}$ with scaling of $t_2'C_{Y_2}$ 
which terminates on a model $Y$ on which $K_Y+B_Y+\delta C_Y$ 
is nef$/Z$ for any sufficiently small $\delta\ge 0$. 
Since $Y_2\bir Y_i$ is an isomorphism in codimension one,   
$K_{Y_i}+B_{Y_i}+t_i'C_{Y_i}$ is nef$/Z$, and $\lim_{i\to \infty} t_i'=0$, we deduce that 
$K_{Y_2}+B_{Y_2}$ is (numerically) a limit of movable$/Z$ $\R$-divisors. 
So, the LMMP does not contract any divisors, i.e. $Y\bir Y_i$ is an isomorphism in codimension one.

For any $i\gg 0$, 
$K_Y+B_{Y}+t_i'{C_{Y}}$ is nef$/Z$ which in turn implies that 
$K_Y+B_{Y}-t_i\rddown{B_Y}$ is also nef$/Z$.
Since $Y\bir Y_i$ is an isomorphism in codimension one and 
since  $(Y_i/Z,B_{Y_i}-t_i\rddown{B_{Y_i}})$ is a log minimal model of 
 $(X/Z,B-t_i\rddown{B})$,  $(Y/Z,B_Y-t_i\rddown{B_Y})$ is a log minimal model of 
$(X/Z,B-t_i\rddown{B})$, for every $i\gg 0$.
By Step 3, we are done.
\end{proof}

\begin{prop}\label{p-acc-main-vertical-sa}
Theorem \ref{t-acc-main-general-modified} in dimension $d-1$ implies Theorem \ref{t-acc-main-general-modified} 
 in dimension $d$ in the vertical case, i.e. when every lc centre of $(X/Z,B)$  is vertical$/Z$.
\end{prop}
\begin{proof}
By Proposition \ref{p-acc-main-vertical}, statements (1) and (3) of Theorem \ref{t-acc-main-general-modified} 
hold for $(X/Z,B)$, in particular,  $(X/Z,B)$ has  
a log minimal model $(Y/Z,B_Y)$. By assumptions, $K_X+B\sim_\Q 0$ over some non-empty
open subset $U\subseteq Z$ and if $\eta$ 
is the generic point of any lc centre of $(X/Z,B)$ then $f(\eta)\in U$. This implies that 
$K_Y+B_Y\sim_\Q 0$ over $U$ and that if $\eta$ 
is the generic point of any lc centre of $(Y/Z,B_Y)$ then $\eta$ is mapped into $U$.
By replacing $(X/Z,B)$ with $(Y/Z,B_Y)$ we may assume that $(X/Z,B)$ 
is $\Q$-factorial dlt and that $K_X+B$  is nef$/Z$. It remains to prove that 
$K_X+B$ is semi-ample$/Z$. 

Run an LMMP$/Z$ on $K_X+B-\epsilon \rddown{B}$ with scaling of some ample$/Z$ 
divisor, for some sufficiently 
small rational number $\epsilon>0$. 
By [\ref{B-II}, Proposition 3.2], $K_X+B$ is numerically trivial on each 
step of this LMMP. Since $\rddown{B}$ is vertical$/Z$, $(K_X+B-\epsilon \rddown{B})|_F\sim_\Q 0$ 
where $F$ is the generic fibre of $f$. So, 
by Theorem \ref{t-main-klt}, the LMMP terminates 
on a model $X'$ on which $K_{X'}+B_{X'}$ and $K_{X'}+B_{X'}-\epsilon \rddown{B_{X'}}$
are both nef$/Z$. Another application of Theorem \ref{t-main-klt} shows that 
$K_{X'}+B_{X'}-\delta \rddown{B_{X'}}$ is semi-ample$/Z$ for any $\delta\in (0,\epsilon]$.
The pair $(X'/Z,B_{X'})$ is $\Q$-factorial and lc and  $(X'/Z,B_{X'}-\epsilon \rddown{B_{X'}})$ 
is klt. If $\rddown{B_{X'}}=0$ we are done so we can assume that $\rddown{B_{X'}}\neq 0$.
Let $(Y/Z,B_Y)$ be a $\Q$-factorial dlt blowup of $(X'/Z,B_{X'})$. By construction, 
$K_Y+B_Y\sim_\Q 0$ over $U$ and if $\eta$ is the generic point of any lc 
centre of $(Y/Z,B_Y)$ then $\eta$ is mapped into $U$. 

Let $S$ be a component of $T:=\rddown{B_{Y}}$ and put
$K_S+B_S:=(K_Y+B_Y)|_S$. If $Q$ is the normalisation of the image of $S$ in $Z$ and if 
$V\subseteq Q$ is the inverse image of $U$, then $K_S+B_S\sim_\Q 0$ over $V$ 
and if $\eta$ is the generic point of any lc 
centre of $(S/Q,B_S)$ then $\eta$ is mapped into $V$.
So, by induction, $K_S+B_S$ is semi-ample$/Q$ hence semi-ample$/Z$.  On the other hand, if 
$P:= e^*\rddown{B_{X'}}$ and if $\delta>0$ is any sufficiently 
small rational number, where $e$ is the morphism $Y\to X'$, then 
$$
K_Y+B_Y-\delta P=e^*(K_{X'}+B_{X'}-\delta \rddown{B_{X'}})
$$ 
is semi-ample$/Z$. Now, by Theorem \ref{conj-s-ampleness}, 
$K_Y+B_Y$ is semi-ample$/Z$ hence $K_X+B$ is also semi-ample$/Z$.
\end{proof}

In the proof of \ref{p-acc-main-vertical-sa}, it is also possible to compactify 
$X$ and $Z$ first before running the LMMP on $K_X+B-\epsilon \rddown{B}$; in this 
way we would need Theorem \ref{conj-s-ampleness} only 
when $X,Z$ are projective.


\subsection*{The horizontal case} In this subsection, we deal with the horizontal case of 
Theorem \ref{t-acc-main-general-modified}. 

\begin{lem}\label{l-acc-horizontal-trivial-1}
Assume Conjecture \ref{ACC} and Theorem \ref{t-acc-main-general-modified} in dimension $d-1$.
Let $(X/Z,B)$ be of dimension $d$ as in Theorem \ref{t-acc-main-general-modified}.
Moreover, assume that there is a contraction $g\colon X\to T/Z$ such that 

$(1)$ $K_X+B\sim_\Q 0/T$,

$(2)$ some lc centre of $(X/Z,B)$ is horizontal over $T$.\\
Then, $(X/Z,B)$ has a good log minimal model.
\end{lem}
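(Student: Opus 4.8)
The plan is to mimic the proof of Lemma~\ref{l-horizontal-trivial-1}, with the single-variable hypothesis ``$(K_X+B)|_F\sim_\Q 0$'' replaced by, and the new hypothesis on lc centres of Theorem~\ref{t-acc-main-general-modified} transported through, adjunction to a boundary divisor. First I would replace $(X/Z,B)$ by a suitable $\Q$-factorial dlt blowup (Corollary~\ref{c-Q-factorial-blup}), so that $(X/Z,B)$ is $\Q$-factorial dlt and there is a component $S$ of $\rddown{B}$ which is horizontal$/T$ --- this is possible exactly because of hypothesis (2), and since the blowup is birational over $X$, the composite $X\to T$ is still a contraction, $K_X+B\sim_\Q 0/T$ is preserved, and generic points of lc centres still map into $U$. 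Running an LMMP$/Z$ on $K_X+B$ with scaling and invoking [\ref{Birkar-II-supplement}], it suffices to produce a good log minimal model of $(X/Z,B)$; as this is local on $Z$ we may assume $Z=\Spec R$ is affine.

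Next comes the adjunction step. Put $K_S+B_S:=(K_X+B)|_S$, so $(S/Z,B_S)$ is dlt (in particular lc) of dimension $d-1$, $B_S$ is a $\Q$-divisor, and $K_S+B_S\sim_\Q 0/T$ by hypothesis (1). The key verification is that $(S/Z,B_S)$ satisfies the hypotheses of Theorem~\ref{t-acc-main-general-modified} in dimension $d-1$: restricting $K_X+B\sim_\Q 0$ over $U$ gives $K_S+B_S\sim_\Q 0$ over $U$; and if $\eta$ is the generic point of an lc centre of $(S/Z,B_S)$, then by dlt adjunction it is also the generic point of an lc centre of $(X/Z,B)$ (lc centres of $(S,B_S)$ are precisely the lc centres of $(X,B)$ contained in $S$), so $f(\eta)\in U$. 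Hence, by Theorem~\ref{t-acc-main-general-modified} in dimension $d-1$ (which uses Conjecture~\ref{ACC} in dimension $d-1$), $(S/Z,B_S)$ has a log minimal model on which $K+B$ is semi-ample$/Z$, i.e. a good log minimal model; consequently, if $I\in\N$ is such that $I(K_S+B_S)$ is Cartier, then $R(S/Z,I(K_S+B_S))$ is a finitely generated $R$-algebra (cf. [\ref{B-dam}]).

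Finally I would carry out the finite-generation transfer. Enlarging $I$, we may assume $I(K_X+B)$ is Cartier and $I(K_X+B)\sim g^*L$ for some Cartier divisor $L$ on $T$ (using $K_X+B\sim_\Q 0/T$ and that $g$ is a contraction). Then $I(K_S+B_S)\sim e^*L$ where $e\colon S\to T$ is the induced morphism, so by Theorem~\ref{t-fg-pullbacks} the algebra $R(T/Z,L)$ is finitely generated; since $X\to T$ is a contraction, the projection formula then yields that $R(X/Z,I(K_X+B))$ is a finitely generated $R$-algebra. Since $U$ is nonempty open in $Z$ it contains the generic point of $Z$, so $(K_X+B)|_{X_\eta}\sim_\Q 0$ on the generic fibre of $f$, and Theorem~\ref{t-main-under-fg} applies and produces the desired good log minimal model of $(X/Z,B)$. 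The main obstacle I expect is the bookkeeping in the adjunction step --- checking that $(S/Z,B_S)$ genuinely inherits all the hypotheses of Theorem~\ref{t-acc-main-general-modified} in dimension $d-1$, in particular that the ``generic points of lc centres land in $U$'' condition passes faithfully through adjunction; everything else is a formal repetition of the argument in Lemma~\ref{l-horizontal-trivial-1}.
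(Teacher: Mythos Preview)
Your proposal is correct and follows essentially the same approach as the paper's own proof: reduce to the $\Q$-factorial dlt case, pick a horizontal component $S$ of $\rddown{B}$, verify via adjunction that $(S/Z,B_S)$ satisfies the hypotheses of Theorem~\ref{t-acc-main-general-modified} in dimension $d-1$ (including the lc-centre condition), deduce finite generation of $R(S/Z,I(K_S+B_S))$, transfer this to $R(T/Z,L)$ via Theorem~\ref{t-fg-pullbacks} and then to $R(X/Z,I(K_X+B))$ via the contraction $X\to T$, and conclude with Theorem~\ref{t-main-under-fg}. The paper's proof is nearly word-for-word the same, including the adjunction bookkeeping you flagged as the main obstacle.
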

\begin{proof}
By replacing $(X/Z,B)$ with a $\Q$-factorial dlt blowup, 
we can assume that $(X/Z,B)$ is $\Q$-factorial dlt and that there is a 
component $S$ of $\rddown{B}$ which is horizontal$/T$. Run an LMMP$/Z$ on 
$K_X+B$ with scaling of some ample$/Z$ divisor. Since termination and semi-ampleness$/Z$ are 
local on $Z$, 
we can assume that $Z$ is affine, say $\Spec R$. 
By Theorem \ref{t-mmodel-term-scaling}, 
the LMMP terminates with a 
good log minimal model if we prove that $(X/Z,B)$ has a good log minimal model. 

By assumptions, $K_X+B\sim_\Q 0$ over some non-empty open subset $U\subseteq Z$ and if $\eta$ 
is the generic point of any lc centre of $(X/Z,B)$ then $f(\eta)\in U$.
By adjunction define $K_S+B_S:=(K_X+B)|_S$.
Then, $K_S+B_S\sim_\Q 0$ over $U$ and if $\eta$ 
is the generic point of any lc centre of $(S/Z,B_S)$ then $f(\eta)\in U$. 
Moreover, $K_S+B_S\sim_\Q 0/T$.

Since we are assuming Conjecture \ref{ACC} and Theorem \ref{t-acc-main-general-modified} in dimension $d-1$, $(S/Z,B_S)$ has a good log minimal model. 
Therefore, if $I(K_S+B_S)$ is Cartier for some $I\in\N$, 
then  $R(S/Z,I(K_S+B_S))$ is a finitely generated $R$-algebra (cf. [\ref{B-dam}]).
We can choose $I$ such that $I(K_X+B)$ is Cartier and such that 
$I(K_X+B)\sim g^*L$ for some Cartier divisor $L$ on $T$. So, 
$I(K_S+B_S)\sim e^*L$ where $e\colon S\to T$ is the induced morphism.
Now, by Lemma \ref{t-fg-pullbacks}, $R(T/Z,L)$ is a finitely 
generated $R$-algebra which in turn implies that $R(X/Z,I(K_X+B))$
is a finitely generated $R$-algebra since $X\to T$ is a contraction. 
Therefore, $R(X/Z,K_X+B)$
is a finitely generated $R$-algebra and according to Theorem \ref{t-main-under-fg}, $(X/Z,B)$ has a good log minimal model.
\end{proof}

\begin{prop}\label{t-acc-main-horizontal}
Assume Theorem \ref{t-acc-main-general-modified}  in dimension $d-1$.
Then, Theorem \ref{t-acc-main-general-modified} holds in dimension $d$ in the horizontal case, i.e. 
when some lc centre of $(X/Z,B)$ is horizontal over $Z$.
\end{prop}
\begin{proof}
We can assume that $(X/Z,B)$ is $\Q$-factorial dlt and that $f\colon X\to Z$ is a contraction. 
By assumptions, some component of $\rddown{B}$ is 
horizontal$/Z$. Moreover, $K_{X}+B\sim_\Q 0$ over some non-empty open subset $U\subseteq Z$ and 
if $\eta$ is the generic point of any lc centre of $(X/Z,B)$ then $\eta$ is mapped into $U$. 
Run an LMMP$/Z$ on $K_X+B$ with scaling of some ample$/Z$ divisor. 
By Theorem \ref{t-mmodel-term-scaling}, the LMMP terminates with a good log minimal model 
if we show that $(X/Z,B)$ has a 
good log minimal model.

If $\epsilon>0$ is a sufficiently small rational number, then 
$(X/Z,B-\epsilon \rddown{B})$ is klt and $K_X+B-\epsilon \rddown{B}$ is not pseudo-effective$/Z$. 
Thus, by [\ref{BCHM}] there is a 
Mori fibre space $(Y/Z,B_Y-\epsilon \rddown{B_Y})$ for $(X/Z,B-\epsilon \rddown{B})$ 
obtained by running an LMMP$/Z$ on $K_X+B-\epsilon \rddown{B}$. 
Let $g\colon Y\to T/Z$ be the $K_Y+B_Y-\epsilon \rddown{B_Y}$-negative extremal 
contraction which defines the Mori fibre space structure, and let $R$ be the corresponding 
extremal ray. By construction, 
$K_Y+B_Y\sim_\Q 0$ over $U$ hence $(K_Y+B_Y)\cdot R=0$ which in turn implies that 
 $K_Y+B_Y\sim_\Q 0/T$. Since 
$K_Y+B_Y-\epsilon \rddown{B_Y}$ is numerically negative$/T$,  $\rddown{B_Y}\neq 0$.
As we are assuming Conjecture \ref{ACC} in dimension $d$, we can assume that $(Y/Z,B_Y)$ is lc.

Let $(W/Z,B_W)$ be a log smooth model of $(X/Z,B)$ of type (2) as in Definition \ref{d-log-smooth-model} 
such that $W$ dominates $Y$, say by a morphism $h\colon W\to Y$. In particular, we may assume that 
if $\eta$ is the generic point 
of any lc centre of $(W/Z,B_W)$, then $\eta$ is mapped into $U$. 
 We can write 
$$
K_W+B_W=h^*(K_Y+B_Y)+G
$$ 
where $G$ is exceptional$/Y$. Run the LMMP$/Y$ on $K_W+B_W$ with scaling of some ample$/Y$ divisor. 
By Theorem \ref{t-exc-LMMP}, we reach a model $X'$ on which $G_{X'}\le 0$. Moreover, 
$$
K_{X'}+B_{X'}-G_{X'}\sim_\Q 0/T
$$ as it is the pullback of $K_Y+B_Y$. 

Let $\pi$ be the given morphism $W\to X$. By definition of log smooth models, we can write 
$$
K_W+B_W=\pi^*(K_X+B)+E
$$ 
where $E$ is effective and exceptional$/X$. On the other hand,  
since $K_X+B\sim_\Q 0$ and $K_Y+B_Y\sim_\Q 0$ over $U$, 
we have $\pi^*(K_X+B)=h^*(K_Y+B_Y)$ over $U$ which means that $E=G$ over $U$. 
Therefore, every component of $G$ with negative coefficient is mapped into $Z\setminus U$ 
hence $G_{X'}$ is also mapped into $Z\setminus U$. In particular, this means that 
 $K_{X'}+B_{X'}\sim_\Q 0$ 
over $U$. Moreover, if $\eta$ 
is the generic point of any lc centre of $(X'/Z,B_{X'})$ then $\eta$ is mapped into $U$ 
because the same holds for $(W/Z,B_W)$. 
Let $U_T\subseteq T$ be the inverse image of $U$. Then, 
$K_{X'}+B_{X'}\sim_\Q 0$ 
over $U_T$ and if $\eta$ 
is the generic point of any lc centre of $(X'/Z,B_{X'})$ then $\eta$ is mapped into $U_T$. 
 
By construction, $\rddown{B_Y}$ is ample$/T$ hence in particular some of its components are horizontal$/T$. 
This implies that some component of $\rddown{B_{X'}}$ is horizontal$/T$ because $\rddown{B_Y}$ is the pushdown of 
$\rddown{B_{X'}}$.
By Lemma \ref{l-acc-horizontal-trivial-2} below,  we can run an LMMP$/T$ on $K_{X'}+B_{X'}$ 
which ends up with a model $X''$ 
on which $K_{X''}+B_{X''}$ is semi-ample$/T$ (more precisely, 
we should take $X,Y,Z,B$ in the lemma to be our $X',Y,T,B_{X'}$ respectively). 
Over $U_T$, $X'\bir X''$ is an isomorphism.
So, if $X''\to T''/T$ is the contraction associated 
to $K_{X''}+B_{X''}$, then $T''\to T$ is birational and some component of $\rddown{B_{X''}}$ is horizontal$/T''$. 

By construction, $K_{X''}+B_{X''}\sim_\Q 0/T''$, and $K_{X''}+B_{X''}\sim_\Q 0$ over $U$. Moreover, 
if $\eta$ 
is the generic point of any lc centre of $(X''/Z,B_{X''})$ then $\eta$ is mapped into $U$. 
 Now we can apply Lemma \ref{l-acc-horizontal-trivial-1} (by taking $X,Z,T,B$ in the lemma 
 to be our $X'',Z,T'',B_{X''}$ respectively) to get a good log minimal model of $({X''}/Z,B_{X''})$ 
 which would give a good log minimal model of $(W/Z,B_W)$ hence of $(X/Z,B)$, by Remark \ref{rem-log-smooth-model}.
\end{proof}

\begin{lem}\label{l-acc-horizontal-trivial-2}
Assume Conjecture \ref{ACC} in dimension $d$ and Theorem \ref{t-acc-main-general-modified} in dimension $d-1$.
Let $(X/Z,B)$ be of dimension $d$ as in Theorem \ref{t-acc-main-general-modified}.
Moreover, assume that

$\bullet$ $(X/Z,B)$ is $\Q$-factorial dlt,

$\bullet$ there exist contractions $e\colon X\to Y$ and $g\colon Y\to Z$,

$\bullet$ $e$ is birational and $Y$ is $\Q$-factorial, 

$\bullet$ $K_Y+B_Y$ is lc, $D:=K_Y+B_Y-\epsilon \rddown{B_Y}$ is klt, and $g$ is a $D$-negative 
extremal contraction for some $\epsilon>0$ where $B_Y:=e_*B$.

Then, any LMMP$/Z$ on $K_X+B$ with scaling of an ample$/Z$ divisor  
terminates with a good log minimal model of $(X/Z,B)$.
\end{lem}
\begin{proof}
By Theorem \ref{t-mmodel-term-scaling}, it is enough to prove that $(X/Z,B)$ has a 
good log minimal model. By assumptions, $K_X+B\sim_\Q 0$ 
over some non-empty open subset $U\subseteq Z$ and if $\eta$ 
is the generic point of any lc centre of $(X/Z,B)$ then $f(\eta)\in U$. 
Let $U_Y\subseteq Y$ be the inverse image of $U$. Then, 
$K_X+B\sim_\Q 0$ 
over  $U_Y$ and if $\eta$ 
is the generic point of any lc centre of $(X/Z,B)$ then $e(\eta)\in U_Y$.
Since $e$ is birational, by Proposition \ref {p-acc-main-vertical-sa} we can assume
 that $K_X+B$ is actually 
semi-ample$/Y$. By replacing $X$ with the lc model of $(X/Y,B)$ we can 
assume that $K_X+B$ is ample$/Y$ (we may loose the $\Q$-factorial dlt property of $(X/Z,B)$
but we will recover it later).

Since $g$ is an extremal contraction and since $K_Y+B_Y\sim_\Q 0$ over $U$, 
$K_Y+B_Y\sim_\Q 0/Z$. Moreover, 
since  $K_X+B$ is ample$/Y$, there is an $e$-exceptional $\Q$-divisor $A\ge 0$ 
such that 
$$
K_X+B+A=e^*(K_Y+B_Y)
$$ 
and $K_X+B+A\sim_\Q 0/Z$, and since $K_Y+B_Y$ is lc, $K_X+B+A$ is lc too. Also, since $-A$ is 
ample$/Y$, $\Supp A$ contains all the prime exceptional$/Y$ divisors on $X$. 
Now we can add a small multiple of $A$ to $B$ and assume that $\Supp B$ 
contains  all the prime exceptional$/Y$ divisors on $X$. Note that 
$A\sim_\Q 0$ over $U_Y$ hence $A=0$ over $U_Y$, so we still have $K_X+B\sim_\Q 0$ 
over $U$ after adding a small multiple of $A$ to $B$. Now, 
$$
\Supp e^*\rddown{B_Y}\subseteq (\Supp \rddown{B}\cup \Supp A) \subseteq \Supp B
$$ 
Replacing $(X/Z,B)$ with a 
$\Q$-factorial dlt blowup, we can again assume that $(X/Z,B)$ is $\Q$-factorial dlt; 
the property $\Supp e^*\rddown{B_Y}\subseteq \Supp B$ is preserved.

By assumptions, $\rddown{B_Y}$ is 
ample$/Z$ hence $e^*\rddown{B_Y}$ 
is semi-ample$/Z$. Thus, for a small rational number $\tau >0$ we can write
$$
K_X+B=K_X+B-\tau e^*\rddown{B_Y}+\tau e^*\rddown{B_Y}\sim_\Q K_X+\Delta/Z
$$ 
where $\Delta$ is some rational boundary such that 
$(X/Z,\Delta)$ is klt:  since 
$(Y/Z,B_Y-\epsilon \rddown{B_Y})$ is klt, $\rddown{B_Y}$ contains all 
the lc centres of $(Y/Z,B_Y)$, in particular, the image of all the lc centres 
of $(X/Z,B)$; so, $\Supp e^*\rddown{B_Y}$ contains all the components of 
$\rddown{B}$ hence $(X/Z,B-\tau e^*\rddown{B_Y})$ is klt and we can indeed 
find $\Delta$ with the required properties.
By Theorem \ref{t-main-klt}, we can 
run an LMMP$/Z$ on  $K_X+\Delta$ which ends up with a good log minimal model 
of $(X/Z,\Delta)$ hence a good log minimal model of $(X/Z,B)$.
\end{proof}

\begin{proof}(of Theorem \ref{t-acc-main-general-modified})
This follows from Propositions \ref{p-acc-main-vertical-sa} and \ref{t-acc-main-horizontal}.
\end{proof}




\vspace{2cm}

\flushleft{DPMMS}, Centre for Mathematical Sciences,\\
Cambridge University,\\
Wilberforce Road,\\
Cambridge, CB3 0WB,\\
UK\\
email: c.birkar@dpmms.cam.ac.uk

\end{document}